\documentclass[10pt,oneside,letterpaper,final]{amsart}
\usepackage[T1]{fontenc}
\usepackage[latin9]{inputenc}
\usepackage{calrsfs}
\usepackage{geometry}
\geometry{verbose,letterpaper,tmargin=1in,bmargin=1in,lmargin=1in,rmargin=1in}
\usepackage{amssymb}
\usepackage{esint}

\theoremstyle{definition}
	\newtheorem{defn}{Definition}[section]

	\theoremstyle{plain}
	\newtheorem{thm}[defn]{Theorem}
	\newtheorem*{thm*}{Theorem}
	\newtheorem{lem}[defn]{Lemma}
	\newtheorem*{lem*}{Lemma}
	\newtheorem{prop}[defn]{Proposition}
	\newtheorem*{prop*}{Proposition}
	\newtheorem{cor}[defn]{Corollary}
	\newtheorem*{cor*}{Corollary}
	\theoremstyle{remark}
	\newtheorem{rmk}[defn]{}

	\DeclareMathOperator{\Ker}{Ker}
	\newcommand{\ms}{\text{ }}

	\newcommand{\C}{\mathbb C}
	\newcommand{\Z}{\mathbb Z}
	\newcommand{\N}{\mathbb N}

	\newcommand{\mb}[1]{\mathbb{#1}}

	\newcommand{\frk}[1]{\mathfrak{#1}}
	
	\newcommand{\norm}[1]{\left\| #1 \right\|}
	\newcommand{\vnorm}{\norm{\hspace{.15cm}}}
	\newcommand{\projnorm}[1]{\norm{#1}^{\wedge}}
	\newcommand{\vprojnorm}{\projnorm{\ms}}
	
	\newcommand{\ttimes}[1]{\ms \widetilde{*}_{#1} \ms}
	
\renewcommand{\labelenumi}{(\roman{enumi})}
\title[Analytic Subordination]{Analytic subordination results in free probability from non-coassociative derivation-comultiplications\\}
\author{Stephen Curran}
\address{Department of Mathematics\\University of California at Berkeley\\Berkeley, CA 94720}
\email{curransr@math.berkeley.edu}

\begin{document}
\begin{abstract}
We extend Voiculescu's approach to analytic subordination through the coalgebra of the free difference quotient to non-coassociative derivation-comultiplications appearing in free probability theory.  We obtain new proofs of Voiculescu's analytic subordination results for freely Markovian triples, and for multiplication of unitaries which are free with amalgamation.
\end{abstract}

\maketitle
\section*{Introduction}

\noindent A derivation-comultiplication on a unital algebra $A$ over $\C$ is a linear map
\begin{equation*}
 \Delta:A \to A \otimes A,
\end{equation*}
which satisfies the product rule $\Delta(ab) = (a\otimes 1)\Delta(b) + \Delta(a)(1\otimes b)$.  Derivation-comultiplications play a prominent role in free probability theory, most notably in Voiculescu's ``microstates-free'' approaches to free entropy, free Fisher information and free mutual information (\cite{fisher},\cite{mutual}).  Of particular interest is the free difference quotient, introduced to study free Fisher information and free entropy, and at the center of the ``free analysis'' of Voiculescu (\cite{cyclomorphy}, \cite{free1}, \cite{free2}).

The free difference quotient $\partial_{X:B}$ is the derivation-comultiplication on $B \langle X \rangle$ determined by
\begin{align*}
 \partial_{X:B}(X) &= 1 \otimes 1,\\
\partial_{X:B}(b) &= 0, \ms \ms \ms \ms \ms\ms \ms (b \in B),
\end{align*}
where $B$ is a unital algebra over $\C$ and $X$ is algebraically free from $B$.  $\partial_{X:B}$ has the additional property of coassociativity, i.e. 
\begin{equation*}
\left(\text{id} \otimes \partial_{X:B}\right) \circ \partial_{X:B} = \left(\partial_{X:B} \otimes \text{id}\right) \circ \partial_{X:B}.
\end{equation*}
In considering the corepresentations of this coalgebra, Voiculescu found a natural explanation for the phenomenon of analytic subordination, a powerful tool in free harmonic analysis.

In \cite{entropyi}, Voiculescu proved (under some easily removed genericity assumptions) that if $X$ and $Y$ are self-adjoint and freely independent random variables, then the Cauchy transforms of $G_{X+Y}$ and $G_X$ satisfy an analytic subordination relation in the upper half-plane.  He used this result to prove certain inequalities on $p$-norms of densities, free entropies and Riesz energies.  It was later discovered by Biane that the subordination extends to the operator-valued resolvents, and that a similar result holds for free multiplicative convolution \cite{biane}.  He used these results to prove certain Markov-transitions properties for processes with free increments.  (See also \cite{newapproach},\cite{romuald},\cite{nica} for other approaches to subordination in free probability).  

Though technically useful, the proofs of these results did little to explain why analytic subordination appears in the context of free convolutions. What Voiculescu observed in \cite{coalgebra} is that, roughly speaking, the invertible corepresentations of $\partial_{X:B}$ are the $B$-resolvents $(b-X)^{-1}$ (and their matricial generalizations).  Moreover, if $X$ and $Y$ are $B$-freely independent, then a certain conditional expectation is a coalgebra morphism from the coalgebra of $\partial_{X+Y:B}$ to the coalgebra of $\partial_{X:B}$.  Since coalgebra morphisms preserve corepresentations, one should expect that $B$-resolvents of $X+Y$ are mapped to $B$-resolvents of $X$ by this conditional expectation.  This approach led to the generalization of the earlier results for free additive convolution to the $B$-valued context.  

In \cite{markovian}, Voiculescu found that he could extend this result by simple operator-valued analytic continuation arguments.  Here he found a general subordination result for freely Markovian triples, and gave a $B$-valued extension of Biane's result for multiplicative convolution of unitaries.
 
In this paper we extend Voiculescu's method to non-coassociative derivation-comultiplications appearing in free probability theory.  Through this approach we obtain new proofs of the subordination results in \cite{markovian}.

In his development of free mutual information (\cite{mutual}), Voiculescu introduced the derivation
\begin{equation*}
 \delta_{A:B}:A \vee B \to A \vee B \otimes A \vee B
\end{equation*}
determined by
\begin{align*}
 \delta_{A:B}(a) &= a\otimes 1 - 1\otimes a, \ms \ms \ms (a \in A),\\
\delta_{A:B}(b) &= 0, \ms \ms \ms \ms \ms \ms \ms \ms \ms\ms\ms\ms\ms\ms\ms\ms\ms\ms (b \in B),
\end{align*}
where $A$ and $B$ are unital algebras which are algebraically free, and $A \vee B$ denotes the algebra generated by $A \cup B$.  Here we will use the coalgebra structure of $\delta_{A:B}$ to give a natural proof of the following subordination result for freely Markovian triples.

\begin{thm*}
Let $(M,\tau)$ be a W$^*$-algebra with faithful, normal trace state $\tau$.  Let $1 \in B \subset M$ be a W$^*$-subalgebra, and let $1 \in A,C \subset M$ be $*$-subalgebras which are $B$-free in $(M,E_B)$, i.e. $A,B,C$ is \textit{freely Markovian}.  Then there is an analytic function $F:\mb H_+(A) \times \mb H_+(C) \to B$ such that
\begin{equation*}
 E_{A \vee B} (a+c)^{-1} = (a+F(a,c))^{-1}
\end{equation*}
for $a \in \mb H_+(A)$, $c \in \mb H_+(C)$.
\end{thm*}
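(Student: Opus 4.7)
The plan is to exploit the coalgebra structure of $\delta_{A:B}$, extended to an appropriate larger algebra containing $C$, together with the freely Markovian hypothesis, to force $E_{A \vee B}((a+c)^{-1})$ into the required $B$-valued resolvent form.

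First, using the algebraic freeness underlying the freely Markovian property, I would extend $\delta_{A:B}$ to a derivation $\tilde{\delta}: A \vee B \vee C \to (A \vee B \vee C) \otimes (A \vee B \vee C)$ characterized by $\tilde{\delta}(a) = a \otimes 1 - 1 \otimes a$ for $a \in A$ and $\tilde{\delta}$ vanishing on $B \cup C$. Setting $T = (a+c)^{-1}$, applying $\tilde{\delta}$ to $(a+c)T = 1$ and substituting via $Tc = 1 - Ta$ and $cT = 1 - aT$ yields the compact formula
\begin{equation*}
\tilde{\delta}(T) = -Ta \otimes T + T \otimes aT.
\end{equation*}
Since $a \in A \vee B$, applying $E_{A \vee B} \otimes E_{A \vee B}$ and invoking the $A \vee B$-bimodule property of the conditional expectation gives, with $S := E_{A \vee B}(T)$,
\begin{equation*}
(E_{A \vee B} \otimes E_{A \vee B})(\tilde{\delta}(T)) = -Sa \otimes S + S \otimes aS.
\end{equation*}

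The central claim is the intertwining $\delta_{A:B}(S) = (E_{A \vee B} \otimes E_{A \vee B})(\tilde{\delta}(T))$, i.e., that $E_{A \vee B}$ behaves as a coalgebra morphism at $T$. Granting this, applying the derivation rule to $S \cdot S^{-1} = 1$ yields
\begin{equation*}
\delta_{A:B}(S^{-1}) = -(S^{-1} \otimes 1)\delta_{A:B}(S)(1 \otimes S^{-1}) = a \otimes 1 - 1 \otimes a = \delta_{A:B}(a),
\end{equation*}
so $S^{-1} - a$ lies in $\ker \delta_{A:B}$. Since $\delta_{A:B}$ vanishes precisely on $B$ (a standard consequence of the derivation rule and the algebraic freeness of $A$ over $B$ built into the definition of $\delta_{A:B}$), this forces $F(a,c) := S^{-1} - a \in B$, giving $E_{A \vee B}((a+c)^{-1}) = (a+F(a,c))^{-1}$.

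The main obstacle is establishing the intertwining, where the freely Markovian hypothesis enters essentially. I would verify it first in an operator-norm neighborhood where $T$ admits the Neumann expansion $T = \sum_{n \ge 0} (-1)^n a^{-1}(ca^{-1})^n$. Using the structure theorem for $B$-free products, each conditional expectation $E_{A \vee B}(a^{-1}(ca^{-1})^n)$ reduces to a word in $a^{-1}$ with $B$-valued coefficients, obtained by collapsing the $C$-factors to their $E_B$-images according to the combinatorics of $B$-freeness. A direct derivation-rule computation on these $(A \vee B)$-valued expressions then reproduces the formula $-Sa \otimes S + S \otimes aS$, establishing the intertwining in this regime. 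A standard operator-valued analytic continuation argument then extends the identity $E_{A \vee B}((a+c)^{-1}) = (a + F(a,c))^{-1}$ to all of $\mathbb{H}_+(A) \times \mathbb{H}_+(C)$, with analyticity of $F$ following from that of $T$ and the invertibility of $S$ on this upper-half-plane domain.
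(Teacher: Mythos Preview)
Your overall strategy matches the paper's: use that $E_{A\vee B}$ intertwines $\delta_{A:B\vee C}$ and $\delta_{A:B}$ to deduce $\overline\delta_{A:B}(S^{-1})=\delta_{A:B}(a)$ for $S=E_{A\vee B}((a+c)^{-1})$, hence $S^{-1}-a\in\Ker\overline\delta_{A:B}$. (The paper proves the intertwining cleanly via the $B$-free decomposition of $A\vee B\vee C$, its Corollary~\ref{deltamorph}, rather than via Neumann series, but either route is fine.)

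The genuine gap is your final step. The element $T=(a+c)^{-1}$ is not in the \emph{algebraic} algebra $A\vee B\vee C$, and $S$, $S^{-1}$ are not in $A\vee B$; they live in the norm (or W$^*$-) closures. So you are implicitly working with a closure $\overline\delta_{A:B}$, not with $\delta_{A:B}$ itself. Your assertion that ``$\delta_{A:B}$ vanishes precisely on $B$'' is correct for the algebraic derivation on $A\vee B$, but it is \emph{not} automatic for the closure: $\Ker\overline\delta_{A:B}$ is a priori strictly larger than $B$, and there is no soft argument forcing equality. This is exactly the point the paper singles out as ``the greatest difficulty''. The paper's resolution is substantial: first it assumes $|j(A:B)|_2<\infty$ so that $\delta_{A:B}$ is closable, then shows (Proposition~\ref{deltaker}) that elements of the special ``smooth'' completion $A\ttimes{R}B$ lying in $\Ker\overline\delta_{A:B}$ must be in $B$, by relating $\delta_{A:B}^{(p)}$ to the conjugation homomorphisms $\rho_{(1-m)}$ and invoking the regularization fact $W^*(A\vee B)\cap W^*(U_\epsilon AU_\epsilon^*\vee B)=B$ (Corollary~\ref{split}, which in turn requires all of Section~3). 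A separate perturbation argument then removes the hypothesis on $j(A:B)$. None of this machinery appears in your proposal, and without it the implication $S^{-1}-a\in\Ker\overline\delta_{A:B}\Rightarrow S^{-1}-a\in B$ is unjustified.
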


To develop ``microstates-free'' free Fisher information and free entropy for unitaries (\cite{mutual}), Voiculescu introduced the derivation
\begin{equation*}
 d_{U:B}: B\langle U,U^* \rangle \to B \langle U,U^* \rangle \otimes B \langle U,U^* \rangle,
\end{equation*}
determined by
\begin{align*}
 d_{U:B}(U) &= 1 \otimes U, \\
d_{U:B}(U^*) &= -U^* \otimes 1, \\
d_{U:B}(b) &= 0, \ms\ms\ms\ms\ms\ms\ms\ms\ms\ms\ms \ms \ms b \in B,
\end{align*}
where $U$ is a unitary that is algebraically free from the unital algebra $B$.  Here we show that the coalgebra of $d_{U:B}$ is the natural object in the following subordination result for multiplication of $B$-freely independent unitaries.

\begin{thm*}
Let $(M,\tau)$ be a W$^*$-algebra with faithful, normal trace state $\tau$.  Let $1 \in B \subset M$ be a W$^*$-subalgebra, and let $U,V \in M$ such that $B \langle U,U^* \rangle$ is $B$-free from $B\langle V,V^* \rangle$ in $(M,E_B)$.  Then there is an analytic function $F:\mb D(B) \to \mb D(B)$ such that
\begin{equation*}
 E_{B \langle U,U^* \rangle} UVb(1-UVb)^{-1} = UF(b)(1-UF(b))^{-1}
\end{equation*}
and $\norm{F(b)} \leq \norm{b}$ for $b \in \mb D(B)$.
\end{thm*}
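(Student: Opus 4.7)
The plan is to mirror Voiculescu's coalgebraic approach, now for the non-coassociative derivation $d_{U:B}$. The group-like corepresentations of $d_{U:B}$ to be used are the elements $S_U(c) := Uc(1-Uc)^{-1}$ for $c \in B$, which satisfy $d_{U:B}(S_U(c)) = 1\otimes S_U(c) + S_U(c)\otimes S_U(c)$. This is a direct Leibniz calculation based on the intermediate identity
\begin{equation*}
d_{U:B}\bigl((1-Uc)^{-1}\bigr) = (1-Uc)^{-1}\otimes Uc(1-Uc)^{-1}.
\end{equation*}
Working analogously in $B\langle U,U^*,V,V^*\rangle$ with the derivation $d := d_{U:B\langle V,V^*\rangle}$, the same calculation shows that $S_{UV}(b) := UVb(1-UVb)^{-1}$ satisfies $d(S_{UV}(b)) = 1\otimes S_{UV}(b) + S_{UV}(b)\otimes S_{UV}(b)$, so $S_{UV}(b)$ is the analogous group-like corepresentation of the larger coalgebra.

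The crux of the proof is to show that the conditional expectation $E := E_{B\langle U,U^*\rangle}$ is a coalgebra morphism, in the sense that
\begin{equation*}
(E\otimes E)\circ d = d_{U:B}\circ E \qquad \text{on } B\langle U,U^*,V,V^*\rangle.
\end{equation*}
Since $d$ annihilates $B\langle V,V^*\rangle$ and differentiates only the $U$-letters, it suffices to check this on a spanning family of alternating products $x_0 v_1 x_1 v_2 \cdots v_n x_n$ with $x_i \in B\langle U,U^*\rangle$ and $v_j \in B\langle V,V^*\rangle$ centered over $B$. On such products, $B$-freeness and the trace property yield the explicit formula
\begin{equation*}
E(x_0 v_1 x_1 \cdots v_n x_n) = x_0 \cdot E_B\bigl(v_1 E_B(x_1) v_2 \cdots v_n\bigr) \cdot x_n,
\end{equation*}
and the coalgebra identity reduces to a combinatorial verification using the Leibniz rule and the $B$-valued freeness relations. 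I expect this to be the main obstacle, since the absence of coassociativity precludes appealing to general coalgebra-module machinery and the identity has to be established by direct computation.

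Granted the morphism, $\sigma(b) := E(S_{UV}(b))$ automatically satisfies $d_{U:B}(\sigma) = 1\otimes\sigma + \sigma\otimes\sigma$. Passing to $y = 1 + \sigma$, which satisfies $d_{U:B}(y) = y\otimes(y-1)$, a short Leibniz computation yields $d_{U:B}\bigl(U^*(1 - y^{-1})\bigr) = 0$. After extending $d_{U:B}$ to the norm closure of $B\langle U,U^*\rangle$, inside which $\sigma(b)$ lives as a norm-convergent Neumann series $\sum_{n \geq 1} E((UVb)^n)$ for $\norm{b} < 1$, and using that its kernel coincides with $B$, we conclude that $F(b) := U^*\sigma(b)(1+\sigma(b))^{-1} = U^*(1 - y(b)^{-1})$ lies in $B$. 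Unwinding this identity gives $1 - y(b)^{-1} = UF(b)$, hence $\sigma(b) = UF(b)(1-UF(b))^{-1}$, which is the desired subordination. Analyticity of $F$ on $\mb D(B)$ then follows from the Neumann-series representation of $\sigma$, and the contractivity $\norm{F(b)} \leq \norm{b}$, which in particular ensures $F$ maps $\mb D(B)$ into itself, follows from operator-norm estimates exploiting $\norm{UV} = 1$ and the complete contractivity of $E$.
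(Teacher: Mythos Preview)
Your overall strategy coincides with the paper's: identify the group-like elements $S_U(c)=Uc(1-Uc)^{-1}$ for $d_{U:B}$, show the conditional expectation $E=E_{B\langle U,U^*\rangle}$ intertwines $d_{UV:B}$ with $d_{U:B}$, and read off that $\sigma(b)=E(S_{UV}(b))$ is again group-like, hence of the form $S_U(F(b))$. The coalgebra-morphism step you flag as the main obstacle is in fact the easy part; it follows from a short freeness argument (Lemma~\ref{opexpmorph} and Corollary~\ref{dmorph} in the paper) and does not require a bare-hands expansion over alternating words.

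The genuine gap is the sentence ``using that its kernel coincides with $B$.'' Two problems hide there. First, $d_{U:B}$ is an unbounded derivation on $B\langle U,U^*\rangle$; to even speak of its closure you need closability, and the known sufficient condition is $|\xi(U:B)|_2<\infty$, which is \emph{not} part of the hypotheses. Second, even when $\overline d_{U:B}$ exists, there is no a priori reason its kernel (on the norm closure) should equal $B$; elements in $\Ker\overline d_{U:B}$ are only limits of sequences whose derivatives tend to zero. The paper handles this by building a ``smooth'' Banach-algebra completion $B^\sim_{R,U}\{t\}$, relating $d_{U:B}$ to the evaluations $f\mapsto f((1+m)U)$, and proving (Proposition~\ref{dker}) that any $f$ in that completion with $\overline d_{U:B}(f(U))=1\otimes f(U)$ satisfies $f(U)=Ub$ for some $b\in B$. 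This is the technical heart of the section and your proposal skips it entirely. The paper then removes the extra hypothesis $|\xi(U:B)|_2<\infty$ by a separate regularization: replace $U$ by $U_\epsilon U$ with $U_\epsilon=\exp(i\pi\epsilon S)$ for a free semicircular $S$, invoke the conditional result, and pass to the limit $\epsilon\to0$ using freely-Markovian factorizations of conditional expectations.

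A smaller point: your contractivity sketch (``operator-norm estimates exploiting $\norm{UV}=1$ and complete contractivity of $E$'') does not obviously yield $\norm{F(b)}\le\norm{b}$; the Neumann bound on $\sigma(b)$ gives only $\norm{F(b)}<1$ after some work. The paper first shows $\norm{Un}<1$ via the real-part criterion $2\,\mathrm{Re}\,(1-x)^{-1}\ge 1+\epsilon$, and then upgrades to $\norm{F(b)}\le\norm{b}$ by applying the Schwarz lemma to $z\mapsto\psi(F(zb/\norm{b}))$.
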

\noindent The resolvents $Ub(1-Ub)^{-1}$ appearing here are related to the $S$-transform in free probability theory, see (\cite{frv},\cite{dykema}).

The idea behind these proofs is quite simple.  Because $\delta$ and $d$ are not coassociative, we cannot expect to find interesting corepresentations for these comultiplications.  However, the resolvents $(a+b)^{-1}$ and $Ub(1-Ub)^{-1}$ appearing above are characterized by certain relations with $\delta$ and $d$, respectively.  Moreover, these relations are preserved by certain conditional expectations which are coalgebra morphisms for $\delta$ and $d$.  We should expect then that these resolvents are preserved by these conditional expectations.  The technical difficulties that arise are in working with the closures of these unbounded derivations.

Besides this introduction, the paper has five sections.

Section 1 is purely algebraic.  We look at the relationship between derivations and certain resolvents in a general setting.

In Section 2 we show that certain conditional expectations are coalgebra morphisms for $\delta$ and $d$.

In Section 3 we extend some technical results from \cite{mutual} to the operator-valued case, which will be needed in the next section.

Section 4 contains the proof of the analytic subordination result for freely Markovian triples.  The greatest difficulty is in proving that certain elements in the kernel of the closure of $\delta_{A:B}$ are actually in $B$.  

Section 5 covers the analytic subordination result for multiplication of $B$-freely independent unitaries.  The approach is similar to the freely Markovian case, but the technical difficulties are slightly easier.

\medskip
\noindent \textit{Acknowledgments:}  I would like to thank Dan-Virgil Voiculescu for suggesting this problem, and for his guidance and support while completing this paper.  I would also like to thank Dimitri Shlyakhtenko for his helpful suggestions.

\section{Derivations and Resolvents}

\noindent Here we discuss the relationship between derivations and certain resolvents in a general algebraic framework.

\begin{rmk}\label{derivations}
Let $A,B$ be unital algebras over $\C$, and $\varphi_1,\varphi_2:A \to B$ be unital homomorphisms.  A linear map $D:A \to B$ is a derivation with respect to the $A$-bimodule structure defined by $\varphi_1,\varphi_2$ if 
\begin{equation*}
 D(a_1a_2) = \varphi_1(a_1)D(a_2) + D(a_1)\varphi_2(a_2).
\end{equation*}
It is easy to see that this implies $D(1) = 0$, and if $a \in A$ is invertible then
\begin{equation*}
 D\left(a^{-1}\right) = -\varphi_1\left(a^{-1}\right)D(a)\varphi_2\left(a^{-1}\right).
\end{equation*}

\end{rmk}

\begin{prop}\label{derresolvents}
Let $A,B,\varphi_1,\varphi_2,D$ be as above and let $N = \Ker{D}$.
\begin{enumerate}
 \item Fix $a \in A$.  If $\alpha \in A$ is invertible, and $D(\alpha) = - \varphi_1(\alpha)D(\alpha)\varphi_2(\alpha)$, then  $\alpha = (a+n)^{-1}$ for some $n \in \Ker D$.  Conversely, if $n \in \Ker D$ is such that $a+n$ is invertible, then
\begin{equation*}
 D\left((a+n)^{-1}\right) = -\varphi_1\left((a+n)^{-1}\right)D(a)\varphi_2\left((a+n)^{-1}\right).
\end{equation*}
\item Suppose $U \in A$ is invertible, and $D(U) = \varphi_2(U)$.  If $\alpha \in A$ is such that $1 + \alpha$ is invertible and $D(\alpha) = \varphi_1(\alpha+1)\varphi_2(\alpha)$, then $\alpha = Un(1-Un)^{-1}$ for some $n \in N$ such that $1-Un$ is invertible.  Conversely, if $n \in N$ is such that $1-Un$ is invertible, then
\begin{equation*}
 D\left(Un(1-Un)^{-1}\right) = \varphi_1\left(Un(1-Un)^{-1}+1\right)\varphi_2\left(Un(1-Un)^{-1}\right).
\end{equation*}
\end{enumerate}

\end{prop}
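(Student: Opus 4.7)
The plan is to prove both parts by direct computation using the product rule and the inverse rule $D(x^{-1}) = -\varphi_1(x^{-1})D(x)\varphi_2(x^{-1})$ from Remark \ref{derivations}. Each part is a characterization: an invertible $\alpha$ satisfying a derivation-type identity is forced to have a specific resolvent form in terms of a kernel element $n$, and conversely the resolvent indeed satisfies the identity. I read the identity in part (i) as $D(\alpha) = -\varphi_1(\alpha)D(a)\varphi_2(\alpha)$ (the middle $D(\alpha)$ in the statement looks like a typo for $D(a)$, since otherwise the hypothesis is disconnected from $a$).

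For part (i), the converse direction is a one-line application of the inverse rule to $x = a+n$, together with linearity and $D(n) = 0$, to replace $D(a+n)$ by $D(a)$. For the forward direction, I set $n := \alpha^{-1} - a$, apply the inverse rule to compute $D(\alpha^{-1})$, and use the hypothesis to cancel the $\varphi_1(\alpha)$ and $\varphi_2(\alpha)$ factors; the result is $D(\alpha^{-1}) = D(a)$, whence $D(n) = 0$, so $\alpha = (a+n)^{-1}$ by construction.

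For part (ii), the key algebraic identity is the Neumann-type relation $1 + Un(1-Un)^{-1} = (1-Un)^{-1}$. For the converse, I first observe $D(Un) = \varphi_2(U)\varphi_2(n) = \varphi_2(Un)$ using $D(U) = \varphi_2(U)$ and $D(n) = 0$; then the inverse rule gives $D((1-Un)^{-1}) = \varphi_1((1-Un)^{-1})\varphi_2(Un(1-Un)^{-1})$; finally, the product rule applied to $\alpha = Un \cdot (1-Un)^{-1}$, with terms collected via the Neumann identity, produces exactly $\varphi_1(\alpha+1)\varphi_2(\alpha)$. For the forward direction, inverting $\alpha = Un(1-Un)^{-1}$ for $n$ suggests setting $n := U^{-1}\bigl(1 - (\alpha+1)^{-1}\bigr)$, which automatically gives $1 - Un = (\alpha+1)^{-1}$ (invertible) and $Un(1-Un)^{-1} = \alpha$. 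To verify $n \in N$, I compute $D(U^{-1}) = -\varphi_1(U^{-1})$ from the inverse rule and $D(U) = \varphi_2(U)$, and $D((\alpha+1)^{-1})$ from the inverse rule and the hypothesis $D(\alpha) = \varphi_1(\alpha+1)\varphi_2(\alpha)$; a short product-rule calculation on $U^{-1}(\alpha+1)^{-1}$ then gives $D(n) = 0$.

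I do not anticipate any real obstacle, since the proposition is purely algebraic and every identity falls out of the derivation rules. The only delicate point is bookkeeping: one must keep careful track of which homomorphism ($\varphi_1$ or $\varphi_2$) acts on each factor. In part (ii) the condition $D(U) = \varphi_2(U)$ breaks the symmetry between the two sides, so the $\varphi_2$-side has to absorb correctly when the product rule is applied — especially in the final step of the forward direction, where the two contributions in $D(U^{-1}(\alpha+1)^{-1})$ must cancel to cancel against $D(U^{-1})$.
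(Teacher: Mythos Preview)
Your proposal is correct and follows essentially the same route as the paper: for (i) you compute $D(\alpha^{-1})$ via the inverse rule and cancel to get $D(a)$, and for (ii) you set $n = U^{-1} - U^{-1}(\alpha+1)^{-1}$ and verify $D\bigl(U^{-1}(\alpha+1)^{-1}\bigr) = D(U^{-1})$ by the product rule, exactly as in the paper. You also correctly spot that the hypothesis in (i) should read $D(\alpha) = -\varphi_1(\alpha)D(a)\varphi_2(\alpha)$, and you supply the converse computation in (ii) that the paper leaves as ``a simple computation.''
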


\begin{proof}\hfill
\begin{enumerate}
 \item Fix $a \in A$ and suppose $\alpha \in A$ satisfies the hypotheses, then
\begin{equation*}
 D\left(\alpha^{-1}\right) = -\varphi_1(\alpha^{-1})D(\alpha)\varphi_2(\alpha)^{-1} = D(a).
\end{equation*}
So $\alpha^{-1} - a \in \Ker D$ which proves one direction, the converse is trivial.
\item Suppose $U \in A$ is invertible, and $\alpha \in A$ satisfies the hypotheses, then
\begin{align*}
 D\left(U^{-1}(\alpha+1)^{-1}\right) &= -\varphi_1\left(U^{-1}\right)\varphi_1\left((\alpha+1)^{-1}\right)D(\alpha+1)\varphi_2\left((\alpha+1)^{-1}\right)\\
&-\varphi_1\left(U^{-1}\right)D(U)\varphi_2\left(U^{-1}\right)\varphi_2\left((\alpha+1)^{-1}\right)\\
&= -\varphi_1\left(U^{-1}\right)\left[\varphi_2(\alpha) + 1\right]\varphi_2\left((\alpha+1)^{-1}\right)\\
&= D\left(U^{-1}\right).
\end{align*}
So $n = U^{-1} - U^{-1}(\alpha+1)^{-1} \in \Ker D$, and hence $\alpha = (1-Un)^{-1} - 1 = Un(1-Un)^{-1}$.  The converse is a simple computation.  
\end{enumerate}

\end{proof}

\begin{rmk}
In the sequel, we will apply Proposition \ref{derresolvents} to certain completions of $\delta$ and $d$.
\begin{enumerate}
 \item Observe that $B \subset \Ker \delta_{A:B}$, so if $a \in A$, $b \in B$ are such that $a + b$ is invertible in the completion of $A \vee B$, then $\alpha = (a+b)^{-1}$ satisfies the hypotheses of (i) above.  
\item Likewise, $B \subset \Ker d_{U:B}$, so if $b \in B$ is such that $(1-Ub)$ is invertible in the completion of $B\langle U,U^* \rangle$, then $\alpha = Ub(1-Ub)^{-1}$ satisfies the conditions of (ii) above.
\end{enumerate}

\end{rmk}

\section{Coalgebra morphisms in free probability}

\noindent In this section we prove that certain conditional expectations arising in the contexts of free Markovianity, and $B$-free multiplicative convolution of unitaries, are coalgebra morphisms for the comultiplications $\delta$ and $d$, respectively.  Because we will need these results in the next section, we will work with operator-valued generalizations of $\delta$ and $d$.
\begin{rmk}
In the remainder of the paper, $(M,\tau)$ will denote a tracial W$^*$-probability space.  If $A,B \subset M$, $A \vee B$ will denote the algebra generated (algebraically) by $A \cup B$.  If $1 \in A \subset M$ is a $*$-subalgebra, $E_A$ will denote the canonical trace preserving conditional expectation of $M$ onto W$^*(A)$.  
\end{rmk}

\begin{rmk}
Suppose that $1 \in B \subset M$ is a W$^*$-subalgebra, and that $1 \in A_1,A_2 \subset M$ are subalgebras containing $B$ which are algebraically free with amalgamation over $B$.  Letting $A = A_1 \vee A_2$ denote the algebra generated by $A_1$ and $A_2$, define
\begin{equation*}
 \delta_{A_1:A_2;B} : A \to A \otimes_B A
\end{equation*}
to be the derivation into the $A$-bimodule $A \otimes_B A$, which is determined by
\begin{equation*}
 \delta_{A_1:A_2;B} = \begin{cases} a \otimes 1 - 1 \otimes a, & \text{if }a \in A_1,\\
                       0, & \text{if }a \in A_2.
                      \end{cases}
\end{equation*}
The \textit{$B$-valued liberation gradient} $j = j(A_1:A_2;B)$ is then defined by the requirements that $j \in L^2(A)$, and
\begin{equation*}
 E_{B}(ja) = (E_B \otimes E_B)(\delta_{A_1:A_2;B}(a)), \ms \ms \ms \ms \ms (a \in A).
\end{equation*}
Except in Section 3, we will be interested only in the case $B = \C$, in which case we recover the definitions of Voiculescu in \cite{mutual} of $\delta(A_1:A_2)$ and of the \textit{liberation gradient} $j(A_1:A_2)$.  This $B$-valued generalization was introduce by Nica, Shlyakhtenko and Speicher in \cite{NShSp} as a method for studying $B$-freeness of the algebras $A_1$ and $A_2$.
\end{rmk}

\begin{rmk}
Suppose $1 \in B \subset M$ is a W$^*$-subalgebra, $A \subset M$ is a subalgebra containing $B$ and $U \in M$ is a unitary such that $B\langle U,U^* \rangle$ is algebraically free with amalgamation over $B$ from $A$.  Define
\begin{equation*}
 d_{U:A;B}: A\langle U,U^* \rangle \to A \langle U,U^* \rangle \otimes_B A \langle U,U^* \rangle
\end{equation*}
to be the derivation determined by
\begin{align*}
 d_{U:A;B}(U) &= 1 \otimes U,\\
d_{U:A;B}(U^*) &= -U^* \otimes 1,\\
d_{U:A;B}(a) &= 0, \ms \ms\ms\ms\ms\ms\ms\ms\ms\ms\ms \ms (a \in A).
\end{align*}
The \textit{conjugate of $U$ relative to $A$ with respect to $B$}, denoted $\xi = \xi(U:A;B)$, is then defined by the requirements that $\xi \in L^2(A\langle U,U^* \rangle)$ and
\begin{equation*}
 E_{B}\left(\xi m\right) = \left(E_B \otimes E_B\right) \left(d_{U:A;B}(m)\right), \ms \ms \ms \ms m \in A\langle U,U^* \rangle.
\end{equation*}
We will mostly be interested in the case $B = \C$, in which case we recover the definition of $d_{U:B}$ from \cite{mutual}.  This $B$-valued generalization was considered by Shlyakhtenko in \cite{Shl}.  
\end{rmk}

\begin{rmk}
The following lemma is an operator-valued generalization of a result in \cite{coalgebra}.  The proof is an easy adaptation of the argument found there, we include it here for the convenience of the reader.
\end{rmk}

\begin{lem} \label{opexpmorph}
Let $1 \in B_1,B \subset M$ be W$^*$-subalgebras in $(M,\tau)$ such that $B_1 \subset B$.  Let $1 \in A,C \subset M$ be $*$-subalgebras which are $B$-free in $(M,E_B)$.  Let $D:A \vee B \vee C \to (A \vee B \vee C) \otimes_{B_1} (A \vee B \vee C)$ be a derivation such that $D(B \vee C) = 0$ and $D(A \vee B) \subset (A \vee B) \otimes_{B_1} (A \vee B)$.  Then
\begin{equation*}
 \left(E_{A \vee B} \otimes_{B_1} E_{A \vee B}\right) \circ D = D \circ E_{A\vee B}|_{A \vee B \vee C}.
\end{equation*}

\end{lem}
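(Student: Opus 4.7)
The plan is to verify the identity by induction on the length $n$ of an alternating-product representation $m = x_1 \cdots x_n$ with $x_i$ drawn alternately from $\mathcal{A} := A \vee B$ and $\mathcal{C} := B \vee C$. Since $A \vee B \vee C$ is spanned by such alternating products and both sides of the identity are linear in $m$, this reduction suffices.

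For the base case $n = 1$, if $m \in \mathcal{A}$ then both sides equal $D(m)$, using the hypothesis $D(\mathcal{A}) \subset \mathcal{A} \otimes_{B_1} \mathcal{A}$ (so $E_{A \vee B} \otimes E_{A \vee B}$ fixes $D(m)$ pointwise) together with $E_{A \vee B}|_\mathcal{A} = \mathrm{id}$; if $m \in \mathcal{C}$ then both sides vanish, using $D|_\mathcal{C} = 0$ on the left and, on the right, the $B$-freeness consequence that $E_{A \vee B}(m) = E_B(m) \in B \subset \mathcal{C}$ is killed by $D$. For the inductive step, write $m = x_1 m'$ and apply the product rule $D(m) = (x_1 \otimes 1) D(m') + D(x_1)(1 \otimes m')$. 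When $x_1 \in \mathcal{A}$, the $\mathcal{A}$-bimodularity of $E_{A \vee B}$ and the containment $D(x_1) \in \mathcal{A} \otimes_{B_1} \mathcal{A}$ let one push $x_1$ through $E_{A \vee B} \otimes E_{A \vee B}$ on the first factor and replace $m'$ by $E_{A \vee B}(m')$ on the second factor; invoking the inductive hypothesis then yields the LHS equal to $(x_1 \otimes 1) D(E_{A \vee B}(m')) + D(x_1)(1 \otimes E_{A \vee B}(m'))$, which by the product rule equals $D(x_1 E_{A \vee B}(m')) = D(E_{A \vee B}(m))$. When $x_1 \in \mathcal{C}$, the $D(x_1)$-term vanishes, and splitting $x_1 = E_B(x_1) + x_1^\circ$ with $E_B(x_1^\circ) = 0$ handles the $E_B(x_1) \in B \subset \mathcal{A}$ piece exactly as in the $\mathcal{A}$-case.

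The main obstacle is the $x_1^\circ$ contribution in the $\mathcal{C}$-case: one must verify $(E_{A \vee B} \otimes E_{A \vee B})((x_1^\circ \otimes 1) D(m')) = D(E_{A \vee B}(x_1^\circ m'))$. The key input is that $B$-freeness implies $E_{A \vee B}$ annihilates any alternating centered product in $\mathcal{A}^\circ := \mathcal{A} \cap \ker E_B$ and $\mathcal{C}^\circ := \mathcal{C} \cap \ker E_B$ that begins or ends with a $\mathcal{C}^\circ$-factor---verified by pairing against $\mathcal{A}$-elements and invoking the vanishing of $E_B$ on alternating centered products. Expanding each factor $y$ of $m'$ via $y = E_B(y) + y^\circ$, the only surviving contributions of $E_{A \vee B}(x_1^\circ m')$ are $B$-valued residues produced by $E_B$ applied to strings of centered $\mathcal{C}^\circ$-factors sandwiching $B$-elements absorbed from the $\mathcal{A}$-positions; these match term-by-term with the analogous residues extracted from $(E_{A \vee B} \otimes E_{A \vee B})((x_1^\circ \otimes 1) D(m'))$, and since they all lie in $B \subset \ker D$, the induction closes.
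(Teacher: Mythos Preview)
Your induction sketch is fine through the $\mathcal A$-case of the inductive step, but the $\mathcal C$-case contains a genuine error. You claim that after expanding $m'$ into centered pieces, ``the only surviving contributions of $E_{A\vee B}(x_1^\circ m')$ are $B$-valued residues'' and conclude that the right-hand side vanishes because $B\subset\ker D$. This is false. Take $m'=a_1c_2a_2$ with $a_i\in\mathcal A$, $c_2\in\mathcal C$. Using $\mathcal A$-bimodularity of $E_{A\vee B}$ and the freeness computation for the length-three word $x_1^\circ a_1 c_2$, one gets
\[
E_{A\vee B}(x_1^\circ a_1 c_2 a_2)=E_B\bigl(x_1^\circ\,E_B(a_1)\,c_2^\circ\bigr)\,a_2,
\]
which lies in $A\vee B$ but not in $B$ in general. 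Hence $D\bigl(E_{A\vee B}(x_1^\circ m')\bigr)=(E_B(x_1^\circ E_B(a_1)c_2^\circ)\otimes 1)D(a_2)\neq 0$, and your closing sentence is a non sequitur. The identity does hold here (the left side computes to the same expression), but not for the reason you give; making the induction close would require tracking these $\mathcal A$-valued outputs and invoking the hypothesis on the shorter suffix, which your argument does not do.

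The paper sidesteps this entirely by centering from the outset. With $F_1=(A\vee B)\cap\ker E_B$ and $F_2=(B\vee C)\cap\ker E_B$, $B$-freeness gives the decomposition
\[
(A\vee B\vee C)\ominus(A\vee B)=F_2\oplus\bigoplus_{k\ge 2}\bigoplus_{\alpha_1\ne\cdots\ne\alpha_k}F_{\alpha_1}\cdots F_{\alpha_k},
\]
so it suffices to check that $(E_{A\vee B}\otimes_{B_1}E_{A\vee B})\circ D$ kills each summand. Since $DF_2=0$ and $DF_1\subset(F_1+B)\otimes_{B_1}(F_1+B)$, the Leibniz rule gives
\[
D(F_{\alpha_1}\cdots F_{\alpha_k})\subset\sum_{\alpha_i=1}F_{\alpha_1}\cdots F_{\alpha_{i-1}}(F_1+B)\otimes_{B_1}(F_1+B)F_{\alpha_{i+1}}\cdots F_{\alpha_k},
\]
and for $k\ge 2$ at least one tensor leg is an alternating word of length $\ge 2$ (or sits in $F_2$), hence is annihilated by $E_{A\vee B}$. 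This one-shot argument replaces your induction and avoids the bookkeeping that tripped up your $x_1^\circ$ step.
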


\begin{proof}
First note that $B$-freeness implies
\begin{equation*}
 E_{A \vee B}(A \vee B \vee C) \subset A \vee B.
\end{equation*}

Let $F_1 = (A \vee B) \cap \Ker E_B$, $F_2 = (B \vee C) \cap \Ker E_B$.  Since $A \vee B$ and $B \vee C$ are $B$-free, we have
\begin{equation*}
 (A \vee B \vee C) \ominus (A \vee B) = F_2 \oplus \bigoplus_{k \geq 2} \bigoplus_{\substack{\alpha_1 \neq \dotsb \neq \alpha_k\\ \alpha_i \in \{1,2\}}} F_{\alpha_1}F_{\alpha_2}\dotsb F_{\alpha_k},
\end{equation*}
where the orthogonal difference and direct sums are with respect to the $B_1$-valued inner product defined by $E_{B_1}$.  Now $DF_2 = 0$, and $DF_1 \subset (F_1 + B) \otimes (F_1 + B)$ by hypothesis.  If $\alpha_1 \neq \dotsb \neq \alpha_k$, $\alpha_i \in \{1,2\}$, $k \geq 2$, then
\begin{equation*}
 D\left(F_{\alpha_1}\dotsb F_{\alpha_k}\right) \subset \sum_{\substack{1 \leq i \leq k\\ \alpha_i = 1}} F_{\alpha_1} \dotsb F_{\alpha_{i-1}}\left(F_1 + B\right) \otimes_{B_1} \left(F_1 + B\right)F_{\alpha_{i+1}}\dotsb F_{\alpha_k}.
\end{equation*}
If $k \geq 2$, either $i > 1$ or $i < k$ so that either
\begin{equation*}
 E_{A \vee B}\left(F_{\alpha_1}\dotsb F_{\alpha_{i-1}}\left(F_1 + B\right)\right) = 0
\end{equation*}
or
\begin{equation*}
 E_{A \vee B}\left(\left(F_1+B\right)F_{\alpha_{i+1}}\dotsb F_{\alpha_k}\right) = 0.
\end{equation*}

Since also $DF_2 = 0$, we have shown that
\begin{equation*}
 \left(E_{A \vee B} \otimes_{B_1} E_{A \vee B} \right) \left(D(A \vee B \vee C \ominus A \vee B)\right) = 0.
\end{equation*}
Since
\begin{equation*}
 \left(E_{A \vee B} \otimes_{B_1} E_{A \vee B} \right) \circ D \circ E_{A \vee B}|_{A \vee B \vee C} = D \circ E_{A \vee B}|_{A \vee B \vee C}
\end{equation*}
by hypothesis, the result follows.
\end{proof}

\begin{cor}\label{deltamorph}
Let $1 \in B_1, B \subset M$ be W$^*$-subalgebras such that $B_1 \subset B$, and let $1 \in A,C \subset M$ be $*$-subalgebras which are $B$-free in $(M,E_B)$ and such that $A$ is algebraically free from $B \vee C$ with amalgamation over $B_1$.  Then
\begin{equation*}
 \left(E_{A \vee B} \otimes_{B_1} E_{A \vee B}\right) \otimes \delta_{A:B \vee C;B_1} = \delta_{A:B;B_1} \circ E_{A\vee B}|_{A \vee B \vee C}.
\end{equation*}

\end{cor}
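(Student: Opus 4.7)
The plan is to apply Lemma \ref{opexpmorph} directly, taking $D = \delta_{A:B\vee C;B_1}$. Since $A$ is assumed algebraically free from $B \vee C$ with amalgamation over $B_1$, the derivation $D:A\vee B\vee C \to (A\vee B\vee C)\otimes_{B_1}(A\vee B\vee C)$ is well-defined. The two structural hypotheses of the lemma are then immediate: by definition $D(b)=0$ for $b\in B$ and $D(c)=0$ for $c\in C$, so since $D$ is a derivation one has $D(B\vee C)=0$; and for $a\in A$, $D(a)=a\otimes 1 - 1\otimes a \in A\otimes_{B_1} A \subset (A\vee B)\otimes_{B_1}(A\vee B)$, while $D(b)=0$ for $b\in B$, so the derivation rule gives $D(A\vee B)\subset(A\vee B)\otimes_{B_1}(A\vee B)$.

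Having verified the hypotheses, Lemma \ref{opexpmorph} yields
\begin{equation*}
(E_{A\vee B}\otimes_{B_1} E_{A\vee B})\circ \delta_{A:B\vee C;B_1} = \delta_{A:B\vee C;B_1}\circ E_{A\vee B}|_{A\vee B\vee C}.
\end{equation*}
To identify the right-hand side with $\delta_{A:B;B_1}\circ E_{A\vee B}|_{A\vee B\vee C}$, I would observe two things. First, by $B$-freeness of $A$ and $C$ we have $E_{A\vee B}(A\vee B\vee C)\subset A\vee B$, so only the restriction of $\delta_{A:B\vee C;B_1}$ to $A\vee B$ is relevant. Second, both $\delta_{A:B\vee C;B_1}|_{A\vee B}$ and $\delta_{A:B;B_1}$ are derivations out of $A\vee B$ agreeing on the generating set $A\cup B$ (they are $a\mapsto a\otimes 1 - 1\otimes a$ on $A$ and $0$ on $B$), hence they coincide on all of $A\vee B$.

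There is no real obstacle here; the corollary is essentially just a matter of pattern-matching against the lemma. The only thing to keep in mind is the asymmetry in the statement of Lemma \ref{opexpmorph} itself (the derivation $D$ is required to kill $B\vee C$ and to land in the smaller bimodule $(A\vee B)\otimes_{B_1}(A\vee B)$ when restricted to $A\vee B$), which is exactly the asymmetry built into the definition of $\delta_{A:B\vee C;B_1}$, making the application completely natural.
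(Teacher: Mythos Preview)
Your proof is correct and is exactly the intended argument: the paper states this as a corollary of Lemma~\ref{opexpmorph} with no further proof, and your verification that $D=\delta_{A:B\vee C;B_1}$ satisfies the hypotheses, together with the identification $\delta_{A:B\vee C;B_1}|_{A\vee B}=\delta_{A:B;B_1}$, is precisely the omitted detail.
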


\begin{cor}\label{libmorph}
Suppose that $j(A:B;B_1)$ exists, then so does $j(A:B\vee C;B_1)$ and
\begin{equation*}
 j(A:B \vee C;B_1) = j(A:B;B_1).
\end{equation*}

\end{cor}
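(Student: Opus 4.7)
The plan is to show that the element $\tilde{\jmath} := j(A:B;B_1)$, whose existence is assumed, already satisfies the defining conditions of the liberation gradient $j(A:B\vee C;B_1)$. Since the liberation gradient is uniquely determined by its pairing with the dense subspace $A \vee B \vee C \subset L^2(A \vee B \vee C)$, this will give the claimed equality. The $L^2$-condition is immediate, since $L^2(A \vee B) \subset L^2(A \vee B \vee C)$, so only the defining identity
\begin{equation*}
E_{B_1}(\tilde{\jmath} \cdot m) = (E_{B_1} \otimes E_{B_1}) \circ \delta_{A:B \vee C;B_1}(m), \qquad m \in A \vee B \vee C,
\end{equation*}
needs to be checked.

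To verify it, first use the tower property $E_{B_1} = E_{B_1} \circ E_{A\vee B}$ together with the $L^2$-module property $E_{A\vee B}(\tilde{\jmath}\, m) = \tilde{\jmath} \cdot E_{A\vee B}(m)$ (valid because $\tilde{\jmath} \in L^2(A \vee B)$) to rewrite the left side as $E_{B_1}\bigl(\tilde{\jmath}\cdot E_{A\vee B}(m)\bigr)$. Since $E_{A \vee B}(m) \in A \vee B$, the defining property of $\tilde{\jmath} = j(A:B;B_1)$ converts this into $(E_{B_1} \otimes E_{B_1})\bigl(\delta_{A:B;B_1}(E_{A\vee B}(m))\bigr)$. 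Now invoke Corollary \ref{deltamorph}, which identifies $\delta_{A:B;B_1} \circ E_{A\vee B}$ with $(E_{A \vee B} \otimes_{B_1} E_{A \vee B}) \circ \delta_{A:B\vee C;B_1}$, and apply a final tower-property step on each tensor leg (since $B_1 \subset A \vee B$, one has $E_{B_1} \circ E_{A \vee B} = E_{B_1}$) to collapse $(E_{B_1} \otimes E_{B_1}) \circ (E_{A \vee B} \otimes_{B_1} E_{A \vee B})$ back to $E_{B_1} \otimes E_{B_1}$. The resulting expression is exactly the right side of the desired identity.

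The only technical point requiring care is the $L^2$-module identity $E_{A\vee B}(\tilde{\jmath}\, m) = \tilde{\jmath}\cdot E_{A\vee B}(m)$ for an element $\tilde{\jmath}$ that is only square-integrable rather than bounded; this is handled by approximating $\tilde{\jmath}$ in $L^2$ by bounded elements of $A \vee B$ and using continuity of $E_{A\vee B}$ on $L^2(M)$ together with the elementary bounded case. Apart from this standard approximation, every step is a direct application of either the definition of the $B_1$-valued liberation gradient or Corollary \ref{deltamorph}, and uniqueness then forces $j(A:B \vee C;B_1) = \tilde{\jmath} = j(A:B;B_1)$.
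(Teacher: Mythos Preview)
Your proof is correct and follows essentially the same route as the paper's: both verify the defining identity for $j(A:B\vee C;B_1)$ by the chain $E_{B_1}(\tilde\jmath\, m)=E_{B_1}(\tilde\jmath\, E_{A\vee B}(m))=(E_{B_1}\otimes E_{B_1})\delta_{A:B;B_1}(E_{A\vee B}(m))=(E_{B_1}\otimes E_{B_1})\delta_{A:B\vee C;B_1}(m)$, with Corollary~\ref{deltamorph} supplying the last step. You simply make explicit the $L^2$-module/tower-property justification of the first equality and the tower-property collapse in the last, which the paper leaves tacit.
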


\begin{proof}
For $m \in A \vee B \vee C$, we have
\begin{align*}
 E_{B_1}\left(j(A:B;B_1)m\right) &= E_{B_1}\left(j(A:B;B_1)E_{A \vee B}(m)\right)\\
&= \left(E_{B_1} \otimes E_{B_1}\right) \delta_{A:B;B_1}\left(E_{A\vee B}(m)\right)\\
&= \left(E_{B_1} \otimes E_{B_1}\right) \delta_{A:B\vee C;B_1}(m).
\end{align*}

\end{proof}

\begin{cor}\label{dmorph}
Let $1 \in B_1, B \subset M$ be W$^*$-subalgebras such that $B_1 \subset B$.  Let $U,V \in M$ be unitaries which are $B$-freely independent, and such that $U$ is algebraically free from $B\langle V,V^* \rangle$ with amalgamation over $B_1$.  Then $E_{B \langle U,V,U^*,V^* \rangle} \subset B \langle U,U^* \rangle$, and 
\begin{equation*}
\left(E_{B\langle U,U^* \rangle} \otimes_{B_1} E_{B\langle U,U^*\rangle} \right) \circ d_{UV:B;B_1} = d_{U:B;B_1} \circ E_{B \langle U,U^* \rangle}|_{B \langle UV,V^*U^*\rangle}.
\end{equation*}

\end{cor}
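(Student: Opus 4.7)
The plan is to reduce the statement to Lemma \ref{opexpmorph} by exhibiting a single ``big'' derivation on $B\langle U, V, U^*, V^*\rangle$ that restricts to $d_{UV:B;B_1}$ on $B\langle UV, V^*U^*\rangle$ and to $d_{U:B;B_1}$ on $B\langle U, U^*\rangle$. I would apply the lemma with $A = B\langle U, U^*\rangle$ and $C = B\langle V, V^*\rangle$, so that $A \vee B = B\langle U, U^*\rangle$ and $A \vee B \vee C = B\langle U, V, U^*, V^*\rangle$; the $B$-freeness hypothesis of the lemma is exactly the hypothesis of the corollary. The containment $E_{B\langle U, U^*\rangle}(B\langle U, V, U^*, V^*\rangle) \subset B\langle U, U^*\rangle$ (which I read as the intended meaning of the first assertion of the corollary, and which is needed for the right-hand side to be defined) is then immediate from the opening line of the proof of Lemma \ref{opexpmorph}.

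Using the algebraic freeness of $U$ from $B\langle V, V^*\rangle$ with amalgamation over $B_1$, one may present $B\langle U, V, U^*, V^*\rangle$ as the algebraic amalgamated free product $B_1\langle U, U^*\rangle *_{B_1} B\langle V, V^*\rangle$. By the universal property there is then a unique $B_1$-linear derivation
\[ D : B\langle U, V, U^*, V^*\rangle \to B\langle U, V, U^*, V^*\rangle \otimes_{B_1} B\langle U, V, U^*, V^*\rangle \]
satisfying $D(U) = 1 \otimes U$, $D(U^*) = -U^* \otimes 1$, $D(V) = D(V^*) = 0$, and $D(b) = 0$ for all $b \in B$. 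By construction $D|_{B\langle U, U^*\rangle} = d_{U:B;B_1}$ and $D(B\langle V, V^*\rangle) = 0$, while the one-line Leibniz computation
\[ D(UV) = D(U) V + U D(V) = 1 \otimes UV, \qquad D(V^*U^*) = D(V^*) U^* + V^* D(U^*) = -V^*U^* \otimes 1 \]
together with uniqueness identifies $D|_{B\langle UV, V^*U^*\rangle}$ with $d_{UV:B;B_1}$.

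With the remaining hypothesis $D(A \vee B) \subset (A \vee B) \otimes_{B_1} (A \vee B)$ of Lemma \ref{opexpmorph} visible from the generator values, the lemma gives
\[ (E_{B\langle U, U^*\rangle} \otimes_{B_1} E_{B\langle U, U^*\rangle}) \circ D = D \circ E_{B\langle U, U^*\rangle}|_{B\langle U, V, U^*, V^*\rangle}, \]
and restricting both sides to $B\langle UV, V^*U^*\rangle$ together with the two identifications of the previous paragraph yields the formula in the corollary. The only ``obstacle'' is combinatorial rather than analytic: one must verify that $D$ is a legitimate derivation, which is where the algebraic-freeness hypothesis enters, and then check by Leibniz that its action on $UV$ and $V^*U^*$ reproduces the defining values of $d_{UV:B;B_1}$. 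After these checks the corollary is a formal consequence of Lemma \ref{opexpmorph}, entirely parallel to the deduction of Corollary \ref{deltamorph}.
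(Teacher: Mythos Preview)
Your proof is correct and follows essentially the same route as the paper: apply Lemma \ref{opexpmorph} with $A = B\langle U,U^*\rangle$, $C = B\langle V,V^*\rangle$, and a derivation on $B\langle U,V,U^*,V^*\rangle$ that kills $B\langle V,V^*\rangle$ and restricts to $d_{U:B;B_1}$ on $B\langle U,U^*\rangle$, then restrict to $B\langle UV,V^*U^*\rangle$. The only cosmetic difference is that the derivation $D$ you build from the universal property is precisely the object the paper has already named $d_{U:B\langle V,V^*\rangle;B_1}$ (see Remark 2.4), so the paper skips your construction step and simply invokes that derivation directly before observing $d_{U:B\langle V,V^*\rangle;B_1}|_{B\langle UV,V^*U^*\rangle} = d_{UV:B;B_1}$.
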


\begin{proof}
Apply Lemma \ref{opexpmorph} to find that $E_{B\langle U,U^* \rangle} B\langle U,V,U^*,V^* \rangle \subset B \langle U,U^* \rangle$, and
\begin{equation*}
 \left(E_{B \langle U,U^* \rangle} \otimes_{B_1} E_{B \langle U,U^* \rangle}\right) \circ d_{U:B\langle V,V^*\rangle;B_1} = d_{U:B;B_1} \circ E_{B \langle U,U^* \rangle}|_{B \langle U,V,U^*,V^*\rangle }.
\end{equation*}
Since $d_{U:B\langle V,V^*\rangle;B_1}|_{B \langle UV,V^*U^* \rangle} = d_{UV:B;B_1}$, the result follows by restricting to $B \langle UV,V^*U^*\rangle$.  
\end{proof}

\begin{cor}\label{conjmorph}
Suppose that $\xi(U:B;B_1)$ exists, then so does $\xi(UV:B;B_1)$, and
\begin{equation*}
 \xi(UV:B;B_1) = E_{B\langle UV,V^*U^*\rangle} \left(\xi(U:B;B_1)\right).
\end{equation*}

\end{cor}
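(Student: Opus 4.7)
The plan is to mimic the proof of Corollary~\ref{libmorph}, substituting Corollary~\ref{dmorph} for Corollary~\ref{deltamorph}. I would set
\[
\eta := E_{B\langle UV,V^*U^*\rangle}\bigl(\xi(U:B;B_1)\bigr).
\]
Since conditional expectations are $L^2$-contractive and $\xi(U:B;B_1) \in L^2$ by hypothesis, $\eta$ lies in $L^2(B\langle UV,V^*U^*\rangle)$. It remains to verify the defining identity
\[
E_{B_1}(\eta m) = (E_{B_1} \otimes E_{B_1})\bigl(d_{UV:B;B_1}(m)\bigr)
\]
for every $m \in B\langle UV,V^*U^*\rangle$; uniqueness of the conjugate variable then forces $\xi(UV:B;B_1) = \eta$.

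For fixed $m$, I would chain four reductions. First, since $\eta \in L^2(W^*(B\langle UV,V^*U^*\rangle))$ and $m$ lies in this same algebra, the bimodule property of $E_{B\langle UV,V^*U^*\rangle}$ applied to $\xi(U:B;B_1)\, m\, b_1$ (for $b_1 \in B_1$) yields $E_{B_1}(\eta m) = E_{B_1}\bigl(\xi(U:B;B_1)\, m\bigr)$. Second, the same principle applied to $\xi(U:B;B_1) \in L^2(W^*(B\langle U,U^*\rangle))$ gives $E_{B_1}\bigl(\xi(U:B;B_1)\, m\bigr) = E_{B_1}\bigl(\xi(U:B;B_1)\, E_{B\langle U,U^*\rangle}(m)\bigr)$. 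Third, the defining property of $\xi(U:B;B_1)$, applied to $E_{B\langle U,U^*\rangle}(m) \in B\langle U,U^*\rangle$, converts this into $(E_{B_1} \otimes E_{B_1})\bigl(d_{U:B;B_1}(E_{B\langle U,U^*\rangle}(m))\bigr)$. Fourth, Corollary~\ref{dmorph} identifies $d_{U:B;B_1} \circ E_{B\langle U,U^*\rangle}$ with $(E_{B\langle U,U^*\rangle} \otimes_{B_1} E_{B\langle U,U^*\rangle}) \circ d_{UV:B;B_1}$; composing with $E_{B_1} \otimes E_{B_1}$ and invoking the tower property $E_{B_1} = E_{B_1} \circ E_{B\langle U,U^*\rangle}$ on each tensor leg collapses the expression to $(E_{B_1}\otimes E_{B_1})\bigl(d_{UV:B;B_1}(m)\bigr)$, as required.

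I do not anticipate any real obstacle: with Corollary~\ref{dmorph} already in place, the argument is essentially parallel to Corollary~\ref{libmorph} and reduces to bookkeeping with the $B_1$-balanced tensor structure and the tower property of conditional expectations. The only points requiring care are the two $L^2$ manipulations in the first two steps, both of which follow from the $N$-bimodule property of $E_N$ applied to $\xi \cdot m \cdot b_1$ with $\xi, b_1 \in N$ for the appropriate subalgebra $N$.
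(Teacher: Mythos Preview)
Your proposal is correct and is precisely the argument the paper has in mind: it says only ``the proof is similar to Corollary~\ref{libmorph},'' and what you have written is exactly that argument, adapted via Corollary~\ref{dmorph} and with the additional (necessary) projection onto $B\langle UV,V^*U^*\rangle$ to land in the right $L^2$-space. The four-step chain you outline is the natural unpacking of that one-line reference.
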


\begin{proof}
The proof is similar to Corollary \ref{libmorph}.
\end{proof}

\section{Regularization via unitary conjugation}

\noindent Our aim in this section is to show that if $1 \in A,B \subset M$ are $*$-subalgebras, then we can can find a unitary $U$ arbitrarily close to the identity such that $W^*(UAU^* \vee B) \cap W^*(A \vee B) = B$, which will be needed in the next section.  In the case $B = \C$, this follows easily from the considerations in \cite{mutual}.  Here we extend the necessary results from that paper to the $B$-valued case by using the $B$-valued liberation gradient introduced in the previous section.

\begin{rmk}
The $L^2$-norm of the $B$-valued liberation gradient gives a measure of how far the algebras $A_1$ and $A_2$ are from being $B$-free.  In particular, it is shown in \cite{NShSp} that $A_1$ and $A_2$ are $B$-free if and only if $j(A_1:A_2;B) = 0$.  In the case $B = \C$, Voiculescu gave some estimates on the ``distance'' between the algebras $A_1$ and $A_2$ when the liberation gradient $j(A_1:A_2)$ is bounded \cite{mutual}.  We begin by observing that his estimates extend directly to the $B$-valued case.
\end{rmk}

\begin{lem}
Let $1 \in B \subset M$ be a W$^*$-subalgebra, and let $1 \in A_1,A_2 \subset M$ be $*$-subalgebras which contain $B$, and such that $A_1$ is algebraically free from $A_2$ with amalgamation over $B$.  Suppose that $j(A_1:A_2;B)$ exists.  If $m \in A_1 \cap \Ker E_B$, $m' \in A_2 \cap \Ker E_B$ then
\begin{equation*}
 E_B(j(A_1:A_2;B)mm') = - E_B(j(A_1:A_2;B)m'm) = - E_B(mm')
\end{equation*}
and
\begin{equation*}
 E_B(j(A_1:A_2;B)[m,m']) = -2E_B(mm').
\end{equation*}
In particular,
\begin{equation*}
 \tau(j(A_1:A_2;B)[m,m']) = -2\tau(mm').
\end{equation*}

\end{lem}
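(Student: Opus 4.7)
The plan is to unwind the defining property of the $B$-valued liberation gradient, which states that for every $a \in A_1 \vee A_2$,
\[
E_B\bigl(j(A_1:A_2;B) \cdot a\bigr) = (E_B \otimes E_B)\bigl(\delta_{A_1:A_2;B}(a)\bigr).
\]
Every identity in the statement is obtained by applying this with $a = mm'$ or $a = m'm$, so the entire proof reduces to computing $\delta(mm')$ and $\delta(m'm)$ and then applying $E_B \otimes E_B$.

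First I would take $a = mm'$. Since $m \in A_1$ we have $\delta(m) = m \otimes 1 - 1 \otimes m$, and since $m' \in A_2$ we have $\delta(m') = 0$. Viewing $\delta$ as a derivation into the $A$-bimodule $A \otimes_B A$ (with actions $a \cdot (x \otimes y) = ax \otimes y$ and $(x \otimes y) \cdot a = x \otimes ya$), the product rule then gives
\[
\delta(mm') = (m \otimes 1)\delta(m') + \delta(m)(1 \otimes m') = m \otimes m' - 1 \otimes mm'.
\]
Applying $E_B \otimes E_B$ and using the hypothesis $E_B(m) = 0$ kills the first term, leaving $E_B(j \cdot mm') = -E_B(mm')$. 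An identical calculation for $m'm$ gives
\[
\delta(m'm) = (m' \otimes 1)\delta(m) + \delta(m')(1 \otimes m) = m'm \otimes 1 - m' \otimes m,
\]
and $E_B(m') = 0$ annihilates the second term, yielding the corresponding identity for $E_B(j \cdot m'm)$.

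The commutator identity follows immediately by subtraction. For the final $\tau$-identity I would apply $\tau$ to both sides, using $\tau \circ E_B = \tau$ together with the tracial property $\tau(mm') = \tau(m'm)$ to collapse the two $E_B$ terms into $-2\tau(mm')$.

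The calculation is essentially bookkeeping and I do not foresee any real obstacle: the lemma is a direct extraction of the relation $j \leftrightarrow \delta$ that defines the $B$-valued liberation gradient. The only point requiring a little care is correctly interpreting the left and right actions on $A \otimes_B A$ when expanding the derivation on products; once this is written out, all the cross terms involving $E_B(m)$ or $E_B(m')$ vanish automatically and the identities drop out.
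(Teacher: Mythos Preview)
Your approach is exactly what the paper intends: the lemma is stated with proof omitted (just a \qed), and the direct unwinding of the defining relation for $j$ that you outline is the only natural argument. The computations of $\delta(mm')$ and $\delta(m'm)$ are correct.

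One small point worth flagging. Your calculation for $m'm$ yields $(E_B\otimes E_B)\delta(m'm)=E_B(m'm)$, not $E_B(mm')$. Thus the chain $E_B(j\,mm')=-E_B(j\,m'm)=-E_B(mm')$ and the identity $E_B(j\,[m,m'])=-2E_B(mm')$, read literally, presuppose $E_B(mm')=E_B(m'm)$, which is not automatic for a trace-preserving conditional expectation onto a general $B$. You seem to have noticed this, since you speak of ``collapsing the two $E_B$ terms'' using the traciality of $\tau$; just be explicit that subtraction gives $E_B(j\,[m,m'])=-E_B(mm')-E_B(m'm)$, and that it is only after applying $\tau$ (and using $\tau\circ E_B=\tau$ together with $\tau(mm')=\tau(m'm)$) that one obtains $-2\tau(mm')$. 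This is harmless for the sequel, as the next proposition uses only the $\tau$-identity.
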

\qed
\begin{prop}\label{graddist}
Suppose that $\norm{j(A_1:A_2;B)} < \infty$.  If $m \in A_1 \cap \Ker E_B$, $m' \in A_2 \cap \Ker E_B$ then
\begin{equation*}
 |\tau(mm')| \leq \frac{\norm{j(A_1:A_2;B)}}{\left(1 + \norm{j(A_1:A_2;B)}^2\right)^{1/2}}|m|_2 |m'|_2.
\end{equation*}
Equivalently, 
\begin{equation*}
 \norm{(E_{A_1} - E_B)(E_{A_2} - E_B)} \leq \frac{\norm{j(A_1:A_2;B)}}{\left(1 + \norm{j(A_1:A_2;B)}^2\right)^{1/2}}.
\end{equation*}

\end{prop}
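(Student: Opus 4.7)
The plan is to extract an orthogonality relation $jm\perp m$ for $m\in A_1$ from the derivation rule, and then to combine it with the identity $\tau(jmm')=-\tau(mm')$ from the preceding lemma and optimize over a scalar parameter.

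For the orthogonality, for any $m\in A_1$ the product rule gives
\[
\delta_{A_1:A_2;B}(mm^*)=(m\otimes 1)\delta_{A_1:A_2;B}(m^*)+\delta_{A_1:A_2;B}(m)(1\otimes m^*)=mm^*\otimes 1-1\otimes mm^*,
\]
so $(E_B\otimes E_B)\delta_{A_1:A_2;B}(mm^*)=E_B(mm^*)-E_B(mm^*)=0$. By definition of $j=j(A_1:A_2;B)$ this yields $E_B(jmm^*)=0$, hence $\tau(jmm^*)=0$. By the trace property $\tau(jmm^*)=\tau(m^*jm)=\langle jm,m\rangle_{L^2}$, so indeed $jm\perp m$, and by conjugation $\langle m,jm\rangle_{L^2}=0$ as well.

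Now let $m\in A_1\cap\Ker E_B$, $m'\in A_2\cap\Ker E_B$, and $\alpha\ge 0$. From the orthogonality we get the Pythagorean bound
\[
|jm-\alpha m|_2^2=|jm|_2^2+\alpha^2|m|_2^2\le\bigl(\norm{j}^2+\alpha^2\bigr)|m|_2^2,
\]
while the identity $\tau(jmm')=-\tau(mm')$ gives $\tau((jm-\alpha m)m')=-(1+\alpha)\tau(mm')$. Cauchy--Schwarz then yields
\[
(1+\alpha)|\tau(mm')|\le|jm-\alpha m|_2\,|m'|_2\le\sqrt{\norm{j}^2+\alpha^2}\,|m|_2\,|m'|_2.
\]
Minimizing $\sqrt{\norm{j}^2+\alpha^2}/(1+\alpha)$ over $\alpha\ge 0$, the optimum is attained at $\alpha=\norm{j}^2$ and equals $\norm{j}/\sqrt{1+\norm{j}^2}$, which is the claimed bound on $|\tau(mm')|$.

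The equivalent operator-norm statement follows from standard Hilbert-space duality: since $E_B$ commutes with the involution, $A_i\cap\Ker E_B$ is $|\cdot|_2$-dense and $*$-invariant in $L^2(A_i)\ominus L^2(B)$, and $\norm{(E_{A_1}-E_B)(E_{A_2}-E_B)}$ equals the supremum of $|\tau(mm')|$ over $|\cdot|_2$-unit vectors $m,m'$ in these dense subspaces. The only delicate point is isolating the orthogonality $jm\perp m$ from the derivation rule; after that, the estimate is a routine one-variable optimization.
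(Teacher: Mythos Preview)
Your argument is correct and is essentially the one the paper invokes: the proof in \cite[Proposition~7.2]{mutual} proceeds exactly by establishing the orthogonality $jm\perp m$ for $m\in A_1$ via $E_B(jmm^*)=(E_B\otimes E_B)\delta(mm^*)=0$, combining it with $\tau(jmm')=-\tau(mm')$ from the preceding lemma, and optimizing over the scalar $\alpha$ (with optimum at $\alpha=\norm{j}^2$). The passage to the $B$-valued setting is, as the paper indicates, purely formal once one has the $B$-valued lemma and the trace-preserving property $\tau=\tau\circ E_B$.
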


\begin{proof}
Identical to \cite[Proposition 7.2]{mutual}.
\end{proof}

\begin{rmk}
We now turn to the existence of the $B$-valued liberation gradient $j(A_1:A_2;B)$ after conjugating by a unitary in $M$ which commutes with $B$.  As observed in the scalar case by Voiculescu, the key is the relation between $\delta$ and $d$.
\end{rmk}

\begin{prop}
Let $1 \in B \subset M$ be a W$^*$-subalgebra, and $1 \in A \subset M$ a $*$-subalgebra which contains $B$.  If $U$ is a unitary in $M$ which commutes with $B$ and is algebraically free from $A$ with amalgamation over $B$, then
\begin{equation*}
 d_{U:A;B}|_{A \vee UAU^*} = -\delta_{UAU^*:A;B}.
\end{equation*}

\end{prop}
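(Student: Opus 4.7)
The plan is to verify that both $d_{U:A;B}|_{A \vee UAU^*}$ and $-\delta_{UAU^*:A;B}$ are derivations from $A \vee UAU^*$ into the same $A \vee UAU^*$-bimodule, and then to check agreement on a generating set. Since $U$ commutes with $B$, we have $UBU^* = B$, so both $A$ and $UAU^*$ contain $B$, and $(A \vee UAU^*) \otimes_B (A \vee UAU^*)$ is a well-defined $A \vee UAU^*$-bimodule. The algebraic freeness of $U$ from $A$ over $B$ ensures that $UAU^*$ is algebraically free from $A$ with amalgamation over $B$, so $\delta_{UAU^*:A;B}$ is well-defined as a derivation on the free product $A \vee UAU^*$.

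Next I would compute the restriction of $d_{U:A;B}$ on the two factors. Since $A \subset \Ker d_{U:A;B}$ by definition, and $A$ plays the role of the trivialized factor $A_2$ in $\delta_{UAU^*:A;B}$, both maps vanish on $A$. For a generator $UaU^* \in UAU^*$ with $a \in A$, the Leibniz rule and the defining relations $d_{U:A;B}(U) = 1 \otimes U$, $d_{U:A;B}(U^*) = -U^* \otimes 1$, $d_{U:A;B}(a) = 0$ yield
\begin{equation*}
d_{U:A;B}(UaU^*) = (1 \otimes U)(aU^*) + Ua(-U^* \otimes 1) = 1 \otimes UaU^* - UaU^* \otimes 1,
\end{equation*}
which is exactly $-\delta_{UAU^*:A;B}(UaU^*)$ by the defining formula $\delta_{UAU^*:A;B}(x) = x \otimes 1 - 1 \otimes x$ for $x \in UAU^*$. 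In particular the image lies in $(A \vee UAU^*) \otimes_B (A \vee UAU^*)$, so the restriction of $d_{U:A;B}$ really does land in the target bimodule of $\delta_{UAU^*:A;B}$.

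Finally I would invoke the uniqueness of derivations on the free product $A \vee UAU^*$ (amalgamated over $B$): a derivation into an $A \vee UAU^*$-bimodule is uniquely determined by its values on the two factors, provided it vanishes on $B$. Both $d_{U:A;B}|_{A \vee UAU^*}$ and $-\delta_{UAU^*:A;B}$ vanish on $B \subset A$, agree on $A$ (both zero), and agree on $UAU^*$ by the computation above, so they coincide on all of $A \vee UAU^*$. There is no real technical obstacle here; the only mild care needed is in verifying that the values of $d_{U:A;B}$ on $UAU^*$ land inside the restricted bimodule $(A \vee UAU^*) \otimes_B (A \vee UAU^*)$ so that the two maps are genuinely comparable.
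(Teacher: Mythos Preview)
Your proof is correct and follows essentially the same idea as the paper's. The paper verifies the identity by a direct Leibniz-rule computation on a general alternating word $a_1Ua_2U^*\cdots a_{2k-1}Ua_{2k}U^*$, whereas you check only on the generators $A$ and $UaU^*$ and then invoke the fact that two derivations on a free product over $B$ agreeing on the factors must agree everywhere; this is just the packaged version of the same computation.
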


\begin{proof}
We have
\begin{align*}
d_{U:A;B}\left(a_1Ua_2U^*\dotsb a_{2k-1}Ua_{2k}U^*\right) &= \sum_{1 \leq p \leq k} \bigl(a_1Ua_2U^*\dotsb a_{2p-1}\otimes Ua_{2p}U^*\dotsb a_{2k-1}Ua_{2k}U^*\\
&-a_1Ua_2U^*\dotsb a_{2p-1}Ua_{2p}U^* \otimes a_{2p+1}\dotsb a_{2k-1}Ua_{2k}U^*\bigr)\\
&= - \delta_{UAU^*:A;B}\left(a_1Ua_2U^*\dotsb a_{2k-1}Ua_{2k}U^*\right).
\end{align*}

\end{proof}

\begin{cor}\label{libconj}
If $\xi(U:A;B)$ exists, then so does $j(UAU^*:A;B)$ and 
\begin{equation*}
j(UAU^*:A;B) = -E_{A \vee UAU^*}(\xi(U:A;B)).
\end{equation*}
\end{cor}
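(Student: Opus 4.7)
The plan is to define $j := -E_{A \vee UAU^*}(\xi(U:A;B))$ and verify directly that it satisfies the two defining requirements of the $B$-valued liberation gradient $j(UAU^*:A;B)$, namely that it lies in $L^2(UAU^* \vee A)$ and that $E_B(j \cdot m) = (E_B \otimes E_B)(\delta_{UAU^*:A;B}(m))$ for every $m \in UAU^* \vee A$. The crucial ingredient is the preceding proposition, which identifies $d_{U:A;B}|_{A \vee UAU^*}$ with $-\delta_{UAU^*:A;B}$ and thereby transfers information about $\xi$ into information about a liberation gradient.

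For the defining relation, I would fix $m \in UAU^* \vee A$ and chain together four elementary steps. By the preceding proposition,
$$(E_B \otimes E_B)(\delta_{UAU^*:A;B}(m)) = -(E_B \otimes E_B)(d_{U:A;B}(m)),$$
and by definition of $\xi(U:A;B)$ this equals $-E_B(\xi(U:A;B) \cdot m)$. Now, since $U$ commutes with $B$ and $B \subset A$, we have $B = UBU^* \subset W^*(A \vee UAU^*)$, so the tower property $E_B = E_B \circ E_{A \vee UAU^*}$ applies. Combined with the $W^*(A \vee UAU^*)$-bimodule property of $E_{A \vee UAU^*}$, applicable because $m \in A \vee UAU^*$, this yields
$$-E_B(\xi(U:A;B) \cdot m) = -E_B\bigl(E_{A \vee UAU^*}(\xi(U:A;B)) \cdot m\bigr) = E_B(j \cdot m),$$
which is exactly the relation required of $j(UAU^*:A;B)$.

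The $L^2$-integrability is automatic: $\xi(U:A;B) \in L^2(A\langle U,U^* \rangle)$ by hypothesis, and $E_{A \vee UAU^*}$ is a contraction on $L^2$, so $j \in L^2(W^*(A \vee UAU^*))$, which is what the definition demands. There is no substantial obstacle in this argument; its structure precisely parallels that of Corollary \ref{libmorph}, with the sign change coming from the identification $d_{U:A;B}|_{A \vee UAU^*} = -\delta_{UAU^*:A;B}$. The only point that must be noted carefully is that $B$ is contained in $W^*(A \vee UAU^*)$, which is what allows the tower property to pull $E_{A \vee UAU^*}$ past $E_B$ in the calculation above.
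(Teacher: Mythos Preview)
Your proposal is correct and is exactly the argument the paper has in mind: the paper gives no explicit proof (it writes only \qed), treating the corollary as immediate from the identity $d_{U:A;B}|_{A \vee UAU^*} = -\delta_{UAU^*:A;B}$ together with the tower/bimodule properties of $E_{A \vee UAU^*}$, which is precisely what you spell out. One minor simplification: since $B \subset A$ by hypothesis, the inclusion $B \subset A \vee UAU^*$ is immediate and the detour through $UBU^*$ is unnecessary.
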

\qed

\begin{prop}\label{opconj}
Let $1 \in B \subset M$ be a $W^*$-subalgebra, and suppose that $U \in M$ is a unitary such that $\C[U,U^*]$ is independent from $B$.  Then if $\xi(U:\C;\C)$ exists, so does $\xi(U:B;B)$ and
\begin{equation*}
 \xi(U:B;B) = \xi(U:\C;\C).
\end{equation*}

\end{prop}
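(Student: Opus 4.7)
The plan is to verify directly that $\xi := \xi(U{:}\C;\C) \in L^2(W^*(U)) \subseteq L^2(B\langle U,U^*\rangle)$ satisfies the defining identity for $\xi(U{:}B;B)$, namely
\[
E_B(\xi m) \;=\; (E_B \otimes E_B)\bigl(d_{U:B;B}(m)\bigr), \qquad m \in B\langle U, U^*\rangle,
\]
after which uniqueness of the conjugate variable forces $\xi(U{:}B;B) = \xi$. By linearity it will suffice to check this identity on a spanning set. The independence hypothesis supplies both the commutation $[U,B]=0$ (so $W^*(U) \subseteq B'$) and the factorization $E_B|_{W^*(U)} = \tau(\cdot)\,1_B$, which together allow one to restrict attention to simple products $m = bp$ with $b \in B$ and $p \in \C\langle U,U^*\rangle$.

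For such $m = bp$, the Leibniz rule together with $d_{U:B;B}(b) = 0$ gives $d_{U:B;B}(bp) = b \cdot d_{U:B;B}(p)$, and the restriction of $d_{U:B;B}$ to $\C\langle U,U^*\rangle$ coincides with $d_{U:\C;\C}$ under the natural map $\C\langle U,U^*\rangle \otimes \C\langle U,U^*\rangle \to B\langle U,U^*\rangle \otimes_B B\langle U,U^*\rangle$. Writing $d_{U:\C;\C}(p) = \sum_j a_j \otimes a_j'$, the right-hand side of the defining equation becomes
\[
(E_B \otimes E_B)\Bigl(b\sum_j a_j \otimes a_j'\Bigr) \;=\; b\sum_j \tau(a_j)\tau(a_j') \;=\; b\cdot(\tau\otimes\tau)\bigl(d_{U:\C;\C}(p)\bigr) \;=\; b\,\tau(\xi p),
\]
invoking the factorization $E_B|_{W^*(U)}=\tau$ and the defining identity for $\xi(U{:}\C;\C)$. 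On the left-hand side, the commutation $\xi b = b\xi$ (from $W^*(U) \subseteq B'$) followed by the same factorization yields $E_B(\xi bp) = b\,E_B(\xi p) = b\,\tau(\xi p)$, so the two sides coincide.

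The only technical input is the reduction to spanning simple products $bp$, which rests on $[U,B] = 0$ and the tensor factorization supplied by the independence assumption; once that is in hand, the remainder is a direct computation exploiting the $B$-bimodularity of $E_B$ and $E_B|_{W^*(U)} = \tau$.
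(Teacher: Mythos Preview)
Your proof is correct and follows essentially the same approach as the paper's own argument: both verify directly that $\xi(U{:}\C;\C)$ satisfies the defining identity for $\xi(U{:}B;B)$ by exploiting the commutation $[U,B]=0$ and the factorization $E_B|_{W^*(U)}=\tau$ coming from independence. The paper reduces to checking the identity on the monomials $U^n$, $n \in \Z$, while you carry along the explicit factor $b \in B$ and general $p \in \C\langle U,U^*\rangle$; this is a cosmetic difference only.
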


\begin{proof}
Since $U$ commutes with $B$, we just need to check that
\begin{equation*}
 E_B(\xi(U:\C;\C)U^n) = (E_B \otimes_B E_B)(d_{U:B;B}(U^n))
\end{equation*}
for all $n \in \Z$.  If $n \geq 0$, then by independence we have
\begin{align*}
 E_B(\xi(U:\C;\C)U^n) &= \tau(\xi(U:\C;\C)U^n)\\
&= (\tau \otimes_\C \tau)\left(d_{U:\C;\C}(U^n)\right)\\
&= \sum_{k = 0}^{n-1} \tau(U^k)\tau(U^{n-k})\\
&= \sum_{k=0}^{n-1}E_B(U^k)E_B(U^{n-k})\\
&= (E_B \otimes_B E_B) \left(d_{U:B;B}(U^n)\right).
\end{align*}
The case $n < 0$ is similar.

\end{proof}

\begin{prop}\label{freeconj}
Let $1 \in B \subset M$ be a $W^*$-algebra, $1 \in A \subset M$ a $*$-subalgebra containing $B$, and $U \in M$ a unitary such that $A$ is $B$-free from $B\langle U,U^* \rangle$ in $(M,E_B)$.  If $\xi(U:B;B)$ exists, then so does $\xi(U:A;B)$ and
\begin{equation*}
 \xi(U:A;B) = \xi(U:B;B).
\end{equation*}

\end{prop}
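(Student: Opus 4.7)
The plan is to verify directly that $\xi(U:B;B)$, viewed inside $L^2(A\langle U, U^*\rangle)$ via the natural inclusion, already satisfies the defining identity of $\xi(U:A;B)$, namely
\[
E_B(\xi(U:B;B)\ms m) = (E_B \otimes E_B)(d_{U:A;B}(m)), \quad m \in A\langle U, U^*\rangle.
\]
Uniqueness of the conjugate variable will then force $\xi(U:A;B) = \xi(U:B;B)$. Structurally the argument is the exact analogue, for $d$, of the proof of Corollary \ref{libmorph}, with the coalgebra-morphism input supplied by Lemma \ref{opexpmorph} itself rather than by Corollary \ref{dmorph}.

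First I would invoke Lemma \ref{opexpmorph} with the substitutions (in that lemma's notation) $A \mapsto B\langle U, U^*\rangle$, $B \mapsto B$, $C \mapsto A$, and $B_1 = B$: the $B$-freeness of $A$ and $B\langle U, U^*\rangle$ supplies the required freeness hypothesis; $d_{U:A;B}$ vanishes on the lemma's $B \vee C = A$; and on $B\langle U, U^*\rangle$ (the lemma's $A \vee B$) it agrees with $d_{U:B;B}$ and takes values in $B\langle U, U^*\rangle \otimes_B B\langle U, U^*\rangle$. The lemma then yields
\[
\bigl(E_{B\langle U, U^*\rangle} \otimes_B E_{B\langle U, U^*\rangle}\bigr) \circ d_{U:A;B} = d_{U:B;B} \circ E_{B\langle U, U^*\rangle}|_{A\langle U, U^*\rangle}.
\]
Composing on the left with $E_B \otimes E_B$, using the tower property $E_B \circ E_{B\langle U, U^*\rangle} = E_B$, and invoking the definition of $\xi(U:B;B)$ on the element $E_{B\langle U, U^*\rangle}(m) \in B\langle U, U^*\rangle$ produces
\[
(E_B \otimes E_B)(d_{U:A;B}(m)) = E_B\bigl(\xi(U:B;B)\ms E_{B\langle U, U^*\rangle}(m)\bigr).
\]

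To finish, I would remove the inner conditional expectation, i.e.\ establish $E_B(\eta\ms E_{B\langle U, U^*\rangle}(m)) = E_B(\eta m)$ for $\eta = \xi(U:B;B) \in L^2(B\langle U, U^*\rangle)$. This is the only step that is not pure formal manipulation, since $\eta$ need not be bounded; but a standard trace computation
\[
\tau(b\eta m) = \tau(\eta m b) = \tau\bigl(\eta E_{B\langle U, U^*\rangle}(mb)\bigr) = \tau\bigl(b\eta E_{B\langle U, U^*\rangle}(m)\bigr), \quad b \in B,
\]
using trace-preservation of $E_{B\langle U, U^*\rangle}$, its $B$-bimodule property, and traciality (together with an obvious $L^2$-continuity approximation of $\eta$ by bounded elements of $W^*(B\langle U, U^*\rangle)$), gives the equality of the two $E_B$-values after pairing with arbitrary $b \in B$. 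This mild $L^2$-continuity point is the main obstacle; everything else is a direct chase through the definitions and Lemma \ref{opexpmorph}.
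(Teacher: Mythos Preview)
Your proposal is correct and follows essentially the same route as the paper: the same instantiation of Lemma~\ref{opexpmorph} (with $A\mapsto B\langle U,U^*\rangle$, $C\mapsto A$, $B_1=B$) followed by the same three-line chain through the defining property of $\xi(U:B;B)$. The only difference is cosmetic: the paper writes the chain starting from $E_B(\xi(U:B;B)m)$ and treats the identity $E_B(\eta\, m)=E_B(\eta\, E_{B\langle U,U^*\rangle}(m))$ for $\eta\in L^2(W^*(B\langle U,U^*\rangle))$ as immediate (it is the $L^2$-extension of the module property of the conditional expectation), whereas you unwind it explicitly via the trace.
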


\begin{proof}
Apply Lemma \ref{opexpmorph} with $D,A,B_1,B,C$ replaced by $d_{U:A;B},B\langle U,U^* \rangle, B,B, A$ to find
\begin{equation*}
 \left(E_B \otimes_B E_B \right) \circ d_{U:B;B} \circ E_{B \langle U,U^* \rangle}|_{A \langle U,U^*\rangle} = \left(E_B \otimes_B E_B\right) \circ d_{U:A;B}.
\end{equation*}
Now for $m \in A\langle U,U^* \rangle$, we have
\begin{align*}
 E_B\left(\xi(U:B;B)m\right) &= E_B\left(\xi(U:B;B)E_{B \langle U,U^* \rangle}(m)\right)\\
&= \left(E_B \otimes_B E_B\right)d_{U:B;B}\left(E_{B \langle U,U^* \rangle}(m)\right)\\
&= \left(E_B \otimes_B E_B\right) d_{U:A;B}(m).
\end{align*}
\end{proof}

\begin{prop}\label{conjsemicirc}
Let $S$ be a $(0,1)$-semicircular random variable in $(M,\tau)$.  Fix $0 < \epsilon < 1$, and let $U_\epsilon = \text{exp}(\pi i \epsilon S)$.  Then $\xi(U_\epsilon:\C ;\C)$ exists, and
\begin{equation*}
 \norm{\xi(U_\epsilon:\C;\C) - i(2\pi^2\epsilon)^{-1}S} \leq \frac{\epsilon(2-\epsilon)}{2\pi(1-\epsilon)}.
\end{equation*}
In particular, $\xi(U_\epsilon:\C;\C) \in W^*(U_\epsilon)$. 
\end{prop}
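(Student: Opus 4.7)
The plan is to compute $\xi(U_\epsilon:\C;\C)$ explicitly as a Borel function of $S$, using the fact that $S$ is its own conjugate variable for the free difference quotient $\partial_S$ (since $S$ is $(0,1)$-semicircular, one has $\tau(Sp(S)) = (\tau \otimes \tau)(\partial_S p(S))$ for all $p \in \C[S]$), and then get the operator-norm estimate by a short Taylor expansion in $\epsilon$.

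First I would observe that since $0 < \epsilon < 1$ and $\mathrm{spec}(S) = [-2,2]$, the map $s \mapsto e^{\pi i \epsilon s}$ is injective on $[-2,2]$, so $W^*(U_\epsilon) = W^*(S)$. Hence any element of $L^2(W^*(U_\epsilon,U_\epsilon^*))$ is automatically a function of $S$, and producing a bounded function $\phi$ of $S$ that satisfies the defining relation on the dense set $\{U_\epsilon^n : n \in \Z\}$ will simultaneously prove existence of $\xi(U_\epsilon:\C;\C)$ and the concluding claim that it lies in $W^*(U_\epsilon)$.

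Second I would produce $\phi$ in closed form. For $n \geq 1$, $d_{U_\epsilon}(U_\epsilon^n) = \sum_{k=0}^{n-1} U_\epsilon^k \otimes U_\epsilon^{n-k}$, so
\[
(\tau\otimes\tau)\, d_{U_\epsilon}(U_\epsilon^n) = \sum_{k=0}^{n-1}\tau(U_\epsilon^k)\tau(U_\epsilon^{n-k}).
\]
Writing each $\tau(U_\epsilon^j) = \int e^{\pi i \epsilon j s}\,d\rho(s)$ with $\rho$ the semicircular density and summing the geometric progression in $e^{\pi i \epsilon(s-t)}$ converts this into a double integral which, after a Fubini/partial-fraction manipulation, takes the form $\int e^{\pi i \epsilon n s}\,\phi(s)\,d\rho(s)$ with
\[
\phi(s) = -i\,\mathrm{PV}\!\int \cot\!\left(\frac{\pi\epsilon(s-t)}{2}\right) d\rho(t).
\]
The parallel computation for $n \leq -1$ shows that $\xi(U_\epsilon:\C;\C) = \phi(S)$ satisfies the defining relation on all of $\C[U_\epsilon,U_\epsilon^*]$, establishing existence together with membership in $W^*(U_\epsilon)$.

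Third I would estimate $\phi(s) - i(2\pi^2\epsilon)^{-1} s$ by expanding the cotangent via $\cot x = 1/x + \sum_{k \geq 1} c_k x^{2k-1}$. The leading term $1/x$ contributes (after scaling) the Hilbert transform of $\rho$, which equals $s/2$ on $[-2,2]$; this produces the linear leading term $i(2\pi^2\epsilon)^{-1} s$ up to the normalization conventions used in the paper. The remaining terms in the cotangent expansion give a remainder kernel which is bounded on $\{\pi\epsilon(s-t)/2 : s,t \in [-2,2]\} \subset [-2\pi\epsilon, 2\pi\epsilon]$, and integrating against $\rho$ yields a bound uniform in $s \in [-2,2]$, i.e.\ an operator-norm bound.

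The main obstacle is obtaining the sharp constant $\epsilon(2-\epsilon)/(2\pi(1-\epsilon))$. The $(1-\epsilon)^{-1}$ factor reflects the fact that as $\epsilon \to 1$, the argument of the cotangent approaches $\pm\pi$, where $\cot$ has zeros that distort the expansion; controlling this requires a careful remainder estimate for the Taylor series of $\cot$ on the full interval $[-2\pi\epsilon,2\pi\epsilon]$ rather than just near zero, together with the pointwise bounds $|s|,|t| \leq 2$ built into the support of $\rho$.
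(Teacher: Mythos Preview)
Your outline is essentially the paper's argument: represent $\xi(U_\epsilon:\C;\C)$ as a principal-value cotangent integral (i.e.\ the circular Hilbert transform of the spectral density of $U_\epsilon$), compare it to the straight-line Hilbert transform of the semicircle law, and bound the difference. The paper shortcuts your moment computation by quoting the formula $\xi(U:\C;\C)=i(Hp)(U)$ from \cite{mutual}, but your direct derivation via $\sum_k \tau(U_\epsilon^k)\tau(U_\epsilon^{n-k})$ leads to the same object.

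Where you diverge---and where your acknowledged ``main obstacle'' sits---is the remainder estimate. The paper does \emph{not} use the Taylor expansion $\cot x = 1/x + \sum_k c_k x^{2k-1}$; it uses the Mittag--Leffler partial-fraction expansion
\[
\tfrac12\cot\!\bigl(\tfrac{\theta}{2}\bigr)=\frac{1}{\theta}+\sum_{n\neq 0}\Bigl(\frac{1}{\theta+2\pi n}-\frac{1}{2\pi n}\Bigr),
\]
which exhibits the poles (not zeros) of $\cot(\theta/2)$ at $\theta=\pm 2\pi$ directly. For $0<|\theta|\le 2\pi\epsilon$ the $n$th term is bounded by $|\theta|/(|\theta+2\pi n|\cdot 2\pi|n|)$; the worst case $|n|=1$ contributes the factor $(2\pi(1-\epsilon))^{-1}$, and the tail $|n|\ge 2$ telescopes, giving precisely
\[
\Bigl|\tfrac12\cot(\theta/2)-\tfrac{1}{\theta}\Bigr|\le \frac{\epsilon(2-\epsilon)}{2\pi(1-\epsilon)}.
\]
Since the integrand in $H_\delta p$ is supported where $|\theta|\le 2\pi\epsilon$, this pointwise bound passes directly to $\bigl|(Hp)(e^{i\theta_1})-\theta_1/(2\pi^3\epsilon^2)\bigr|$ and hence to the operator norm. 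So the sharp constant drops out in two lines once you switch from the Taylor viewpoint to partial fractions; no delicate remainder analysis on $[-2\pi\epsilon,2\pi\epsilon]$ is needed.
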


\begin{proof}
The distribution of $U_\epsilon$ with respect to $\tau$ has density
\begin{equation*}
 p\left(e^{i\theta}\right) = \chi_{[-\pi\epsilon,\pi \epsilon]}  \frac{4}{\pi\epsilon^2}\sqrt{\epsilon^2 - \theta^2/\pi^2}
\end{equation*}
with respect to the normalized Lebesgue measure on $\mb T$.  By \cite[Proposition 8.7]{mutual}, $\xi(U_\epsilon:\C;\C)$ exists, and is given by $i(Hp)(U_\epsilon)$, where $Hp$ is the circular Hilbert transform of $p$, i.e. $Hp$ is the a.e. limit of $H_\delta p$ as $\delta \to 0$, where
\begin{equation*}
 (H_\delta p)\left(e^{i\theta_1}\right) = -\frac{1}{2\pi}\int_{\delta < |\theta| \leq \pi} p\left(e^{i(\theta_1-\theta)}\right)\text{cot}\left(\frac{\theta}{2}\right) \ms d\theta.
\end{equation*}
For $x \neq 0$, we have the expansion (\cite{tables})
\begin{equation*}
 \frac{1}{2}\text{cot}\left(\frac{x}{2}\right) = \frac{1}{x} + \sum_{n =1}^\infty \frac{1}{x + 2\pi n} - \frac{1}{2\pi n}.
\end{equation*}
It follows that for $0 < |\theta| \leq 2\pi \epsilon$, we have
\begin{align*}
\left|\frac{1}{2}\text{cot}\left(\frac{\theta}{2}\right) - \frac{1}{\theta}\right| &\leq \sum_{n \geq 1} \frac{|\theta|}{|\theta+2n\pi|2n\pi}\\
&\leq (2\pi \epsilon)\left(\frac{1}{2\pi(1-\epsilon)2\pi} + \frac{1}{2\pi}\sum_{n \geq 2} \frac{1}{2\pi(n-1)} - \frac{1}{2\pi n} \right)\\
&= \frac{\epsilon(2-\epsilon)}{2\pi(1-\epsilon)}.
\end{align*}
Hence if $|\theta_1| \leq \pi \epsilon$, then
\begin{equation*}
 \left| (H_\delta p)\left(e^{i\theta_1}\right) + \frac{1}{\pi} \int_{\delta < |\theta| \leq \pi} \frac{p\left(e^{i(\theta_1 - \theta)}\right)}{\theta} \ms d\theta \right| \leq \frac{\epsilon(2-\epsilon)}{2\pi(1-\epsilon)},
\end{equation*}
since $p(\text{exp}(i(\theta_1 - \theta))) = 0$ if $|\theta| > 2 \pi \epsilon$.  But 
\begin{equation*}
 -\frac{1}{\pi} \int_{\delta < |\theta| \leq \pi} \frac{p\left(e^{i(\theta_1 - \theta)}\right)}{\theta} \ms d\theta
\end{equation*}
converges as $\delta \to 0$ to the Hilbert transform of the semicircular law of radius $\pi \epsilon$ evaluated at $\theta_1$.  By the results in \cite[Section 3]{fisher}, this is equal to $\theta_1/(2\pi^3\epsilon^2)$.  So for $|\theta_1| \leq \pi \epsilon$, we have
\begin{equation*}
 \left|(Hp)\left(e^{i\theta_1}\right) - \frac{\theta_1}{2\pi^3\epsilon^2}\right| \leq \frac{\epsilon(2-\epsilon)}{2\pi(1-\epsilon)}.
\end{equation*}
It follows that 
\begin{equation*}
 \norm{\xi(U_\epsilon:\C;\C) - i(2\pi^2\epsilon)^{-1}S} \leq \frac{\epsilon(2-\epsilon)}{2\pi(1-\epsilon)}.
\end{equation*}

\end{proof}

\begin{cor} \label{split}
Let $1 \in B \subset M$ be a W$^*$-subalgebra, $1 \in A \subset M$ a $*$-subalgebra containing $B$, and $S$ a $(0,1)$-semicircular element in $(M,\tau)$ which is independent from $B$ and $B$-freely independent from $A$.  Then for $0 < \epsilon < 1$, we have
\begin{equation*}
 W^*(A \vee B) \cap W^*(U_\epsilon A U_\epsilon \vee B) = B,
\end{equation*}
where $U_\epsilon = \text{exp}(\pi i \epsilon S)$.  
\end{cor}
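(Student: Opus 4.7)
The plan is to chain together the preceding results of this section to produce a finite bound on the $B$-valued liberation gradient $j(U_\epsilon A U_\epsilon^* : A; B)$, and then to invoke Proposition \ref{graddist} together with a standard angle argument. (I read $U_\epsilon A U_\epsilon$ in the statement as $U_\epsilon A U_\epsilon^*$, matching the notation of Corollary \ref{libconj}.)

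First I would use Proposition \ref{conjsemicirc}: it gives existence of $\xi(U_\epsilon : \C; \C)$ with an explicit bound on $\norm{\xi(U_\epsilon:\C;\C) - i(2\pi^2 \epsilon)^{-1}S}$, so the triangle inequality together with $\norm{S} = 2$ yields a finite operator-norm bound on $\xi(U_\epsilon : \C; \C)$ itself. Next, since $S$ is independent from $B$, so is $\C[U_\epsilon, U_\epsilon^*]$, and Proposition \ref{opconj} identifies $\xi(U_\epsilon : B; B) = \xi(U_\epsilon : \C; \C)$. Then, since $S$ is $B$-freely independent from $A$, the subalgebra $B\langle U_\epsilon, U_\epsilon^*\rangle \subset W^*(B, S)$ is $B$-free from $A$, so Proposition \ref{freeconj} identifies $\xi(U_\epsilon : A; B) = \xi(U_\epsilon : B; B)$. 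Thus $\xi(U_\epsilon : A; B)$ inherits the bound from Proposition \ref{conjsemicirc}.

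Since $U_\epsilon$ commutes with $B$ (by classical independence), Corollary \ref{libconj} then produces $j(U_\epsilon A U_\epsilon^* : A; B) = -E_{A \vee U_\epsilon A U_\epsilon^*}(\xi(U_\epsilon:A;B))$. Conditional expectations are contractive in operator norm, so $\norm{j(U_\epsilon A U_\epsilon^*:A;B)}$ is finite. Applying Proposition \ref{graddist} with $A_1 = U_\epsilon A U_\epsilon^*$ and $A_2 = A$ (both contain $B$, and are algebraically free with amalgamation over $B$ by the $B$-freeness of $A$ and $B\langle U_\epsilon, U_\epsilon^*\rangle$) produces a constant $c < 1$ with
\[
\norm{(E_{W^*(U_\epsilon A U_\epsilon^*)} - E_B)(E_{W^*(A)} - E_B)} \leq c.
\]

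Finally, a standard angle argument finishes the job: for any $x \in W^*(A \vee B) \cap W^*(U_\epsilon A U_\epsilon^* \vee B)$ the element $y = x - E_B(x)$ is fixed by each of the two centered projections above, hence also by their composition, forcing $|y|_2 \leq c|y|_2$ and therefore $y = 0$, so $x \in B$. The reverse inclusion is immediate. The main obstacle I anticipate is verifying that Proposition \ref{graddist} genuinely applies here: namely, that $A$ and $U_\epsilon A U_\epsilon^*$ satisfy the algebraic-freeness-with-amalgamation hypothesis, and that the $*$-algebraic estimate extends to the W$^*$-closures appearing in the statement. Both should follow routinely, the first from the $B$-freeness of $A$ and $B\langle U_\epsilon, U_\epsilon^*\rangle$, and the second from $L^2$-continuity of conditional expectations.
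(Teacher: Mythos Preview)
Your proposal is correct and follows essentially the same route as the paper's proof: chain Propositions \ref{conjsemicirc}, \ref{opconj}, \ref{freeconj} and Corollary \ref{libconj} to obtain a bounded $j(U_\epsilon A U_\epsilon^* : A; B)$, then invoke Proposition \ref{graddist}. The paper simply writes ``the result now follows from Proposition \ref{graddist}'' at the end; your explicit angle argument (that $y = x - E_B(x)$ is fixed by the product of the two centered projections, forcing $|y|_2 \le c|y|_2$ with $c<1$) is exactly the intended mechanism, just spelled out.
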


\begin{proof}
By Propositions \ref{opconj} and \ref{freeconj}, $\xi(U_\epsilon:A;B)$ exists and
\begin{equation*}
 \xi(U_\epsilon:A;B) = \xi(U_\epsilon:B;B) = \xi(U_\epsilon:\C;\C).
\end{equation*}
Applying Corollary \ref{libconj}, we see that $j(U_\epsilon AU_\epsilon^*:A;B)$ exists and
\begin{equation*}
 j(U_\epsilon A U_{\epsilon}^* : A ; B) = -E_{A \vee U_\epsilon AU_\epsilon^*} \left[\xi(U_\epsilon:\C;\C)\right].
\end{equation*}
By Proposition \ref{conjsemicirc}, $\xi(U_\epsilon:\C;\C)$ is bounded and hence
\begin{equation*}
 \norm{j(U_\epsilon AU_\epsilon^*:A;B)} < \infty.
\end{equation*}
The result now follows from Proposition \ref{graddist}.
\end{proof}

\section{Analytic subordination for freely Markovian triples}

\noindent In this section we use the derivation $\delta_{A:B}$ to prove the analytic subordination result for a freely Markovian triple $(A,B,C)$.  The main difficulty is in showing that certain ``smooth'' elements in the kernel of the closure of $\delta_{A:B}$ actually lie in $B$.

\begin{rmk}
Let $1 \in A, B \subset M$ be $*$-subalgebras which are algebraically free.  Let $A * B$ denote the $*$-algebra free product of $A$ and $B$ (with amalgamation over $\C$).  Given an invertible $S \in M$, there is a unique $*$-homomorphism $\rho_S:A * B \to M$ determined by
\begin{align*}
 \rho_S(a) &= SaS^{-1}, \ms \ms (a \in A),\\
 \rho_S(b) &= b, \ms \ms \ms \ms \ms \ms \ms \ms\ms\ms (b \in B).
\end{align*}
We will denote by $\rho$ the isomorphism of $A * B$ onto $A \vee B$.  
\end{rmk}

\begin{rmk}
Recall that $\delta_{A:B}: A \vee B \to A \vee B \otimes A \vee B$ is the derivation determined by
\begin{align*}
 \delta_{A:B}(a) &= a \otimes 1 - 1 \otimes a, \ms \ms (a \in A),\\
\delta_{A:B}(b) &= 0, \ms\ms\ms\ms\ms\ms\ms\ms\ms\ms\ms\ms\ms\ms\ms\ms\ms\ms (b \in B).
\end{align*}
For $p \geq 0$, we define $\delta_{A:B}^{(p)}:A \vee B \to (A \vee B)^{\otimes (p+1)}$ recursively by $\delta_{A:B}^{(0)} = \text{id}_{A \vee B}$ and
\begin{equation*}
 \delta_{A:B}^{(p+1)} = \left(\delta_{A:B} \otimes \text{id}^{\otimes p}\right)\circ \delta_{A:B}^{(p)}.
\end{equation*}

\end{rmk}

\begin{rmk} \label{deltanorm}
We will work a certain ``smooth'' Banach algebra completion of $A * B$.  Given $0 < R < 1$, define $\vnorm_R^{\sim}$ on $A * B$ by
\begin{equation*}
 \norm{f}_R^\sim = \sum_{p \geq 0} \projnorm{\delta_{A:B}^{(p)}(\rho(f))}_{(p+1)} R^p,
\end{equation*}
where $\vprojnorm_{(s)}$ denotes the norm on the projective tensor product $M^{\widehat \otimes s}$.  
\begin{lem*} $\vnorm{}_{R}^\sim$ is a finite norm on $A * B$, and if $f, g \in A * B$ then
\begin{equation*}
 \norm{fg}_R^\sim \leq \norm{f}_R^\sim\norm{g}^\sim_R.
\end{equation*}

\end{lem*}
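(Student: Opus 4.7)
The strategy is to reduce both the submultiplicativity and the finiteness claims to a single Leibniz-type formula for the iterated maps $\delta_{A:B}^{(p)}$, which will exhibit a Cauchy-product structure compatible with the projective tensor norm. Once that formula is in hand, finiteness will follow by checking the generating subalgebras $A$ and $B$ separately and invoking submultiplicativity.

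For $0 \leq k \leq p$, I would first introduce a ``fusion'' product
\begin{equation*}
\bullet_k : M^{\widehat{\otimes}(k+1)} \times M^{\widehat{\otimes}(p-k+1)} \to M^{\widehat{\otimes}(p+1)}
\end{equation*}
defined on elementary tensors by multiplying the $(k+1)$-th slot of the first argument with the first slot of the second and juxtaposing the remaining slots. Submultiplicativity of the operator norm on $M$ makes $\bullet_k$ a contraction in the projective norm, so $\projnorm{\xi \bullet_k \eta} \leq \projnorm{\xi}\projnorm{\eta}$. With this notation, I would prove by induction on $p$ the key identity
\begin{equation*}
\delta_{A:B}^{(p)}(fg) \;=\; \sum_{k=0}^{p} \delta_{A:B}^{(k)}(f) \bullet_k \delta_{A:B}^{(p-k)}(g).
\end{equation*}
The inductive step is carried out by applying $\delta_{A:B} \otimes \mathrm{id}^{\otimes p}$ to the right-hand side: on each summand with $k \geq 1$ this operator acts inside the $\delta^{(k)}(f)$-tensor and simply raises the index to $k+1$, while on the $k=0$ summand (whose leading slot has the form $f \cdot y_0$) the ordinary Leibniz rule for $\delta$ splits the result into the new $k=0$ and $k=1$ summands at level $p+1$. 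The bookkeeping point to watch is that $\delta \otimes \mathrm{id}^{\otimes p}$ only touches the first tensor factor, so one must carefully track where the fused slot sits.

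Granted this identity, submultiplicativity follows from the Cauchy product of absolutely convergent series:
\begin{equation*}
\norm{fg}_R^\sim \;\leq\; \sum_{p \geq 0} R^p \sum_{k=0}^p \projnorm{\delta^{(k)}(\rho(f))}\projnorm{\delta^{(p-k)}(\rho(g))} \;=\; \norm{f}_R^\sim \cdot \norm{g}_R^\sim.
\end{equation*}
By this submultiplicativity and the triangle inequality, finiteness on $A * B$ reduces to finiteness on $A$ and on $B$. For $b \in B$ one has $\delta(b) = 0$, hence $\delta^{(p)}(b) = 0$ for $p \geq 1$ and $\norm{b}_R^\sim = \norm{b}$. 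For $a \in A$, a short induction gives $\delta^{(p)}(a) = a \otimes 1^{\otimes p} - 1 \otimes a \otimes 1^{\otimes (p-1)}$ for $p \geq 1$, so $\projnorm{\delta^{(p)}(a)} \leq 2\norm{a}$ and the defining series converges geometrically since $R < 1$. Finally, $\norm{\cdot}_R^\sim$ is genuinely a norm and not merely a seminorm, because its $p = 0$ term equals $\norm{\rho(f)}$ and $\rho$ is an isomorphism of $A * B$ onto $A \vee B \subset M$. I expect the Leibniz induction to be the most delicate step, but it is entirely combinatorial and the formalism of the fusion products $\bullet_k$ should keep the accounting clean.
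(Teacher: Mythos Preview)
Your proposal is correct and follows essentially the same approach as the paper: the paper states the same iterated Leibniz identity (written in the equivalent form $(\delta^{(k)}(\rho(f))\otimes 1^{\otimes(p-k)})(1^{\otimes k}\otimes\delta^{(p-k)}(\rho(g)))$ rather than via your fusion product $\bullet_k$), bounds it in the projective norm, and takes the Cauchy product. Your write-up is slightly more detailed---you sketch the induction for the Leibniz formula, compute $\delta^{(p)}(a)$ explicitly, and verify positive definiteness via the injectivity of $\rho$---whereas the paper simply asserts the formula and remarks that finiteness on $A$ and $B$ is ``easily seen.''
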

\end{rmk}
\begin{proof}
Since $\delta_{A:B}$ is a derivation, if $f,g \in A * B$ then we have
\begin{equation*}
 \delta_{A:B}^{(p)}(\rho(fg)) = \sum_{k=0}^{p} \left(\delta_{A:B}^{(k)}(\rho(f)) \otimes 1^{\otimes (p-k)}\right)\left(1^{\otimes k} \otimes \delta_{A:B}^{(p-k)}(\rho(g))\right),
\end{equation*}
so that
\begin{align*}
 \norm{fg}_R^\sim &= \sum_{p \geq 0} \projnorm{\delta_{A:B}^{(p)}(\rho(fg))}_{(p+1)}R^p\\
&= \sum_{p \geq 0} \projnorm{\sum_{k =0}^p \left(\delta_{A:B}^{(k)}(\rho(f)) \otimes 1^{\otimes (p-k)}\right)\left(1^{\otimes k} \otimes \delta_{A:B}^{(p-k)}(\rho(g))\right)}_{(p+1)} R^p\\
&\leq \sum_{p \geq 0} \sum_{k =0}^p \projnorm{\delta_{A:B}^{(k)}(\rho(f))}_{(k+1)}R^k\projnorm{\delta_{A:B}^{(p-k)}(\rho(g))}_{(p-k+1)}R^{p-k}\\
&= \norm{f}^\sim_R\norm{g}_R^\sim.
\end{align*}
Since $\vnorm_R^\sim$ is easily seen to be finite when restricted to $A$ and to $B$, it follows that $\vnorm_R^\sim$ is a finite norm on $A * B$.
\end{proof}

\begin{rmk}
Let $A \ttimes R B$ denote the Banach algebra obtained by completing $A * B$ under $\vnorm_R^\sim$.  It is clear that $\rho$ extends to a contractive homomorphism $\widetilde \rho: A \ttimes R B \to C^*(A \vee B)$, note however that $\widetilde \rho$ need not be injective.   
\end{rmk}

\begin{rmk}
The main analytic tool we have for studying $\delta_{A:B}$ is its relation to $\rho_{(1-m)}$, $m \in M$, $\norm{m} < 1$.  To state this relation precisely, we will first need to introduce some notation.  Given $m_1,\dotsc,m_s \in M$, let $\theta_s[m_1,\dotsc,m_s]$ denote the linear map from $M^{\otimes (s+1)}$ into $M$ determined by
\begin{equation*}
 \theta_p[m_1,\dotsc,m_s](m'_1 \otimes \dotsc m'_{s+1}) = m'_1m_1m_2'\dotsb m_sm'_{s+1}.
\end{equation*}
Note that
\begin{equation*}
\norm{\theta_p[m_1,\dotsc,m_s](\xi)} \leq \norm{m_1}\dotsb\norm{m_s}\projnorm{\xi}_{(s+1)},
\end{equation*}
where $\vprojnorm_{(s+1)}$ denotes the projective tensor product norm on $M^{\widehat \otimes (s+1)}$.
\end{rmk}

\begin{prop}\label{deltahom}
If $f \in A * B$ and $m \in M$, $\norm{m} < 1$, then
\begin{equation*}
\rho_{(1-m)}(f) = \sum_{p \geq 0} \theta_p[m,\dotsc,m]\left(\delta_{A:B}^{(p)}(\rho(f))\right),
\end{equation*}
where the series converges absolutely in the uniform norm on $M$.  In particular, $\rho_{(1-m)}$ extends to a contractive homomorphism $\widetilde \rho_{(1-m)}:A \ttimes R B \to M$.
\end{prop}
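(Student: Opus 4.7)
The strategy is to expand the right-hand side as defining a map $\Phi: A * B \to M$, verify that it is multiplicative and agrees with $\rho_{(1-m)}$ on the generators $A$ and $B$ of $A * B$, and use the estimate $\norm{\theta_p[m_1,\dotsc,m_p](\xi)} \leq \norm{m_1}\dotsb\norm{m_p}\projnorm{\xi}_{(p+1)}$ to handle convergence and extension.

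Absolute convergence in the uniform norm is immediate from that estimate: the series is dominated by
\begin{equation*}
\sum_{p \geq 0} \norm{m}^p \projnorm{\delta_{A:B}^{(p)}(\rho(f))}_{(p+1)} \leq \norm{f}_R^\sim
\end{equation*}
for any $R$ with $\norm{m} \leq R < 1$. Denote the resulting sum by $\Phi(f)$; the same estimate shows $\Phi: (A * B, \norm{\cdot}_R^\sim) \to M$ is contractive, so once $\Phi = \rho_{(1-m)}$ is proved on $A * B$ the extension $\widetilde\rho_{(1-m)}: A \ttimes R B \to M$ follows automatically.

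Multiplicativity of $\Phi$ follows from applying $\theta_p[m,\dotsc,m]$ to the Leibniz-type expansion of $\delta_{A:B}^{(p)}(\rho(fg))$ established in the preceding lemma. After slotwise multiplication in $M^{\otimes(p+1)}$, the $k$-th summand is a tensor whose $(k+1)$-st slot is a merged $f_{k+1}g_1$; inserting the $p$ copies of $m$ between consecutive slots places no $m$ within that merged slot, so
\begin{equation*}
\theta_p[m,\dotsc,m]\bigl((\delta_{A:B}^{(k)}(\rho(f)) \otimes 1^{\otimes(p-k)})(1^{\otimes k} \otimes \delta_{A:B}^{(p-k)}(\rho(g)))\bigr) = \theta_k[m,\dotsc,m](\delta_{A:B}^{(k)}(\rho(f))) \cdot \theta_{p-k}[m,\dotsc,m](\delta_{A:B}^{(p-k)}(\rho(g))).
\end{equation*}
Absolute convergence justifies rearranging the resulting double sum to $\Phi(fg) = \Phi(f)\Phi(g)$.

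Finally I would verify the formula on generators. For $b \in B$, $\delta_{A:B}(b) = 0$ kills all $p \geq 1$ terms and the $p = 0$ term contributes $b = \rho_{(1-m)}(b)$. For $a \in A$, a short induction gives $\delta_{A:B}^{(p)}(a) = a \otimes 1^{\otimes p} - 1 \otimes a \otimes 1^{\otimes(p-1)}$ for $p \geq 1$, so $\theta_p[m,\dotsc,m](\delta_{A:B}^{(p)}(a)) = a m^p - m a m^{p-1}$; summing over $p \geq 0$ and collecting geometric series yields $a(1-m)^{-1} - m a (1-m)^{-1} = (1-m) a (1-m)^{-1} = \rho_{(1-m)}(a)$. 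Since $\Phi$ and $\rho_{(1-m)}$ are homomorphisms on $A * B$ agreeing on the generating set $A \cup B$, they coincide. The only delicate point is the bookkeeping in the multiplicativity step, keeping track of which tensor slots do and do not acquire an interpolated $m$ upon concatenation; everything else is direct computation.
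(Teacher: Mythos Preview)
Your proposal is correct and follows essentially the same route as the paper: define the right-hand side as a map $\Phi$ (the paper calls it $\varphi$), verify multiplicativity via the Leibniz expansion of $\delta_{A:B}^{(p)}(\rho(fg))$ together with the splitting $\theta_p[m,\dotsc,m] = \theta_k[m,\dotsc,m]\cdot\theta_{p-k}[m,\dotsc,m]$ on the summands, check agreement with $\rho_{(1-m)}$ on $A$ and $B$ separately, and deduce contractive extension from the norm estimate. The paper's computation for $a \in A$ is identical to yours, and your slightly more explicit discussion of the tensor-slot bookkeeping in the multiplicativity step only makes the same argument easier to follow.
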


\begin{proof}
First we will check that the series converges absolutely.  Indeed, by the remark above we have
\begin{equation*}
\sum_{p \geq 0} \norm{\theta_p[m,\dotsc,m] \left(\delta_{A:B}^{(p)} (\rho(f))\right)} \leq \sum_{p \geq 0} \norm{m}^p \projnorm{\delta_{A:B}^{(p)}(\rho(f))}_{(p+1)} = \norm{f}_{\norm{m}}^\sim,
\end{equation*}
which is finite by \ref{deltanorm}.  

Now let $\varphi(f)$ denote the right hand side, it suffices to show that $\varphi$ is a homomorphism from $A*B$ into $M$ which agrees with $\rho_{(1-m)}$ when restricted to $A$ or $B$.  If $f, g \in A * B$, then
\begin{align*}
 \varphi(fg) &= \sum_{p \geq 0} \theta_p[m,\dotsc,m]\left(\delta_{A:B}^{(p)}(\rho(fg))\right)\\
&= \sum_{p \geq 0} \theta_p[m,\dotsc,m]\sum_{k = 0}^p \left(\delta_{A:B}^{(k)}(\rho(f))\otimes 1^{\otimes (p-k)}\right)\left(1^{\otimes k} \otimes \delta_{A:B}^{(p-k)}(\rho(g))\right)\\
&= \sum_{p \geq 0} \sum_{k=0}^p \theta_k[m,\dotsc,m]\left(\delta_{A:B}^{(k)}(\rho(f))\right)\theta_{(p-k)}[m,\dotsc,m]\left(\delta_{A:B}^{(p-k)}(\rho(g))\right)\\
&= \varphi(f)\varphi(g).
\end{align*}
So $\varphi$ is indeed a homomorphism.  Clearly $\varphi(b) = b = \rho_{(1-m)}(b)$.  For $a \in A$, we have
\begin{align*}
 \varphi(a) &= \sum_{p \geq 0} \theta_p[m,\dotsc,m]\left(a \otimes 1^{\otimes p} - 1 \otimes a \otimes 1^{\otimes (p-1)}\right)\\
&= \sum_{p \geq 0}(am^p-mam^{p-1})\\
&= (1-m)a\sum_{p \geq 0}m^p\\
&= (1-m)a(1-m)^{-1}\\
&= \rho_{(1-m)}(a).
\end{align*}
Now if $\norm{m} \leq R < 1$, then we have
\begin{equation*}
 \norm{\rho_{(1-m)}(f)} \leq \norm{f}_{\norm{m}}^\sim \leq \norm{f}_R^\sim,
\end{equation*}
so that $\rho_{(1-m)}$ extends by continuity to a contractive homomorphism $\widetilde \rho_{(1-m)}:A \ttimes R B \to M$.
\end{proof}

\begin{rmk}\label{gradclos}
Recall that the liberation gradient $j(A:B)$ is determined by $j(A:B) \in L^1(W^*(A \vee B))$ and
\begin{equation*}
 \tau\left(j(A:B)m\right) = (\tau \otimes \tau)\left(\delta_{A:B}(m)\right) \ms \ms \ms \ms m \in A \vee B.
\end{equation*}
Voiculescu has shown \cite{mutual} that the existence of $j(A:B)$ in $L^2(W^*(A \vee B))$ is a sufficient condition for the closability of $\delta_{A:B}$, viewed as an unbounded operator
\begin{equation*}
 \delta_{A:B}: L^2(W^*(A \vee B)) \to L^2(W^*(A \vee B) \otimes W^*(A \vee B)).
\end{equation*}
In particular, $|j(A:B)|_2 < \infty$ implies that $\delta_{A:B}$ is closable in the uniform norm, we will denote this closure by $\overline \delta_{A:B}$.  We will need the following standard result on closable derivations (\cite{bratelli}, \cite{coalgebra}).
\end{rmk}

\begin{prop}\label{closder}
Let $K,L$ be unital C$^*$-algebras, let $\varphi_1,\varphi_2:K \to L$ be unital $*$-homomorphisms, let $1 \in A \subset K$ be a unital $*$-subalgebra, and let $D:A \to L$ be a closable derivation with respect to the $A$-bimodule structure on $L$ defined by $\varphi_1,\varphi_2$.  The closure $\overline D$ is then a derivation, and the domain of definition $\frk D(\overline D)$ is a subalgebra.  Moreover, if $a \in A$ is invertible in $K$, then $a^{-1} \in \frk D(\overline D)$ and
\begin{equation*}
 \overline D\left(a^{-1}\right) = -\varphi_1\left(a^{-1}\right)D(a)\varphi_2\left(a^{-1}\right).
\end{equation*}

\end{prop}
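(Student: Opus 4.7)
The plan proceeds in two stages. Stage 1 is a standard approximation argument showing simultaneously that $\frk D(\overline D)$ is an algebra and that $\overline D$ satisfies the Leibniz rule on it: given $x, y \in \frk D(\overline D)$, pick sequences $x_n, y_n \in A$ with $x_n \to x$, $y_n \to y$ in $K$ and $D(x_n) \to \overline D(x)$, $D(y_n) \to \overline D(y)$ in $L$. Because the $\varphi_i$ are contractive $*$-homomorphisms between C$^*$-algebras, $\varphi_1(x_n) \to \varphi_1(x)$ and $\varphi_2(y_n) \to \varphi_2(y)$ in $L$; combined with the boundedness of $(D(x_n))$ and $(D(y_n))$, the Leibniz rule for $D$ on $A$ gives
\begin{equation*}
D(x_n y_n) = \varphi_1(x_n) D(y_n) + D(x_n) \varphi_2(y_n) \longrightarrow \varphi_1(x) \overline D(y) + \overline D(x) \varphi_2(y)
\end{equation*}
in $L$, while $x_n y_n \to xy$ in $K$. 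Closability of $D$ yields $xy \in \frk D(\overline D)$ together with the Leibniz formula for $\overline D$.

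Stage 2 treats the invertibility claim. It suffices to show $a^{-1} \in \frk D(\overline D)$, since applying the Leibniz rule from Stage 1 to $a \cdot a^{-1} = 1$, together with $\varphi_1(a^{-1}) = \varphi_1(a)^{-1}$, then yields the displayed formula. Since $A$ is a $*$-subalgebra, $b := a^{*}a \in A$ is self-adjoint and invertible in $K$, with spectrum lying in some interval $[\epsilon, M]$, $\epsilon > 0$. Setting $\lambda := (M + \epsilon)/2$ and $c := \lambda \cdot 1 - b \in A$, one has $\|c\| \leq (M - \epsilon)/2 < \lambda$, so the Neumann partial sums
\begin{equation*}
S_N := \frac{1}{\lambda} \sum_{n=0}^{N} \left( \frac{c}{\lambda} \right)^n \in A
\end{equation*}
converge in $K$ to $b^{-1}$. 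The iterated Leibniz rule yields $\|D(c^n)\| \leq n \|c\|^{n-1} \|D(c)\|$, so $\sum_n D(c^n)/\lambda^{n+1}$ converges absolutely in $L$; reindexing the resulting double sum via $k$ and $j = n-1-k$ separates two geometric series to give $\lim_N D(S_N) = -\varphi_1(b^{-1}) D(b) \varphi_2(b^{-1})$. Closability places $b^{-1}$ in $\frk D(\overline D)$, and Stage 1 together with the identity $a^{-1} = b^{-1} a^{*}$ then completes the proof.

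The main technical obstacle lies in Stage 2: a direct Neumann series applied to $a$ itself would require $\|1 - a\| < 1$, a condition not available among the hypotheses. The detour through $b = a^{*}a$ with the spectral shift $c = \lambda \cdot 1 - b$ is what forces $\|c\|/\lambda < 1$ and renders the derivative series geometrically convergent; once that is in place, the remainder of the argument is bookkeeping with the Leibniz rule and the closability property.
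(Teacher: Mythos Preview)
Your proof is correct. The paper itself does not supply a proof of this proposition: it is stated as a ``standard result on closable derivations'' with references to \cite{bratelli} and \cite{coalgebra}, so there is no in-text argument to compare against. Your two-stage treatment is a clean way to fill in the details. Stage~1 is the standard approximation argument and is fine as written. In Stage~2 the detour through $b = a^*a$ is exactly the right idea: it exploits the hypothesis that $A$ is a $*$-subalgebra to produce a positive invertible element whose spectrum can be shifted so that the Neumann series (and, crucially, its termwise derivative series) converges in norm. Minor remark: you implicitly use $\overline D(1) = 0$ when applying Leibniz to $a\cdot a^{-1}=1$, which holds since $1 \in A$ and $D(1)=0$; and you use that unital $*$-homomorphisms between C$^*$-algebras are automatically contractive, which is what makes the bounds $\|\varphi_i(c^k)\| \le \|c\|^k$ available.
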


\begin{prop}\label{deltaker}
Let $1 \in B \subset M$ be a W$^*$-subalgebra, and $1 \in A \subset M$ a $*$-subalgebra such that $A$ and $B$ are algebraically free.  Suppose also that $|j(A:B)|_2 < \infty$.  If $0 < R < 1$, then $\widetilde \rho(A \ttimes R B) \subset \frk D(\overline \delta_{A:B})$.  Furthermore, if $f \in A \ttimes R B$ and $\overline \delta_{A:B}(\widetilde \rho(f)) = 0$, then $\widetilde \rho(f) \in B$.  
\end{prop}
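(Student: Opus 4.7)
The plan is to handle the two assertions in turn: the first by a closability argument, the second by combining a key commutator identity with the semicircular regularization of Section 3.

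For the domain inclusion, take $f \in A \ttimes R B$ and an approximating sequence $f_n \in A * B$ with $\norm{f_n - f}_R^\sim \to 0$. Then $\widetilde\rho(f_n) = \rho(f_n) \to \widetilde\rho(f)$ in operator norm, while the bound $\projnorm{\delta_{A:B}(\rho(g))}_{(2)} \leq R^{-1}\norm{g}_R^\sim$ (valid for $g \in A * B$) shows that $\delta_{A:B}(\rho(f_n))$ is Cauchy in $M \widehat\otimes M$. Since the natural inclusion $M \widehat\otimes M \to L^2 \otimes L^2$ is contractive (using $|x \otimes y|_2 \leq \norm{x}\norm{y}$), this sequence is also Cauchy in $L^2 \otimes L^2$, so closability (Remark \ref{gradclos}) places $\widetilde\rho(f)$ in $\frk D(\overline\delta_{A:B})$. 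The same argument simultaneously produces a canonical projective-tensor lift $\widetilde\delta(f) \in M \widehat\otimes M$ whose image in $L^2 \otimes L^2$ equals $\overline\delta_{A:B}(\widetilde\rho(f))$; this is the object I would track through the second part.

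For the kernel statement, I would pass to a tracial W$^*$ extension $\tilde M \supset M$ carrying a $(0,1)$-semicircular $S$ that is independent from $B$ and $B$-freely independent from $A \vee B$ (for instance $\tilde M = M *_B (B \bar\otimes L(\mb F_1))$), together with the continuous linear map $\lambda_S : M \widehat\otimes M \to \tilde M$, $\lambda_S(x \otimes y) = xSy$ (of norm at most $\norm{S}$). Both $\lambda_S \circ \delta_{A:B}$ and the inner derivation $[\,\cdot\,, S]$ are derivations on $A \vee B$ into $\tilde M$ that agree on $A$ (via $\lambda_S(a \otimes 1 - 1 \otimes a) = aS - Sa = [a,S]$) and on $B$ (both vanish, since $S$ commutes with $B$), hence coincide on $A \vee B$; extending to the completion by continuity yields
\begin{equation*}
\lambda_S(\widetilde\delta(f)) = [\widetilde\rho(f), S], \qquad f \in A \ttimes R B.
\end{equation*}

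To finish, I need $\lambda_S(\widetilde\delta(f)) = 0$. By faithfulness of $\tau$, it suffices to show $\tau(\lambda_S(\widetilde\delta(f))\eta) = 0$ for every $\eta \in \tilde M$. Fix $\eta$ and consider the functional $\phi_\eta(x \otimes y) := \tau(xSy\eta)$: it is bounded on $M \widehat\otimes M$ (by $\norm{S}\norm{\eta}\projnorm{\cdot}_{(2)}$) and, via H\"older's inequality $|\tau(Sy\eta x)| \leq \norm{S}\norm{\eta}|x|_2|y|_2$, also on $L^2 \otimes L^2$. Both continuous extensions coincide on the dense algebraic tensor product, so $\phi_\eta(\widetilde\delta(f))$ computed via the projective extension equals $\phi_\eta$ applied to the image of $\widetilde\delta(f)$ in $L^2 \otimes L^2$, which vanishes by the hypothesis $\overline\delta_{A:B}(\widetilde\rho(f)) = 0$. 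Therefore $[\widetilde\rho(f), S] = 0$, so $U_\epsilon = \exp(i\pi\epsilon S)$ commutes with $\widetilde\rho(f)$ for every $0 < \epsilon < 1$. Combining $\widetilde\rho(f) \in W^*(A \vee B)$ with $\widetilde\rho(f) = U_\epsilon \widetilde\rho(f) U_\epsilon^* \in W^*(U_\epsilon A U_\epsilon^* \vee B)$ (using that $U_\epsilon$ commutes with $B$), Corollary \ref{split} applied to the algebras $A \vee B$ and $B$ in $\tilde M$ identifies $\widetilde\rho(f)$ as an element of $B$. The main obstacle I anticipate is precisely this last compatibility: transferring the $L^2$-level vanishing of $\overline\delta_{A:B}(\widetilde\rho(f))$ to the operator-norm vanishing of $[\widetilde\rho(f), S]$, which is the technical knot the introduction flags as the main difficulty of the section.
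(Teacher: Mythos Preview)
Your overall strategy—show that $\widetilde\rho(f)$ commutes with the auxiliary semicircular $S$, hence with $U_\epsilon$, and then invoke Corollary \ref{split}—shares its endgame with the paper's proof, but the route to the commutation is genuinely different, and the step you yourself flag as the main obstacle does not go through as written.

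The specific problem is the claim that $\phi_\eta(x\otimes y)=\tau(xSy\eta)$ extends to a bounded linear functional on the Hilbert space tensor product $L^2(M)\otimes L^2(M)$. The H\"older bound $|\tau(Sy\eta x)|\leq\norm{S}\,\norm{\eta}\,|x|_2\,|y|_2$ shows only that the bilinear form $(x,y)\mapsto\tau(xSy\eta)$ is bounded, i.e.\ that there is a bounded operator $T_\eta$ on $L^2(M)$ with $\phi_\eta(x\otimes y)=\langle T_\eta\hat y,\,J\hat x\rangle$. A bounded bilinear form extends to a bounded functional on the Hilbert tensor product exactly when the corresponding operator is Hilbert--Schmidt, and $T_\eta$ (essentially left multiplication by $S$ composed with right multiplication by $\eta$) has no reason to be Hilbert--Schmidt. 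So the vanishing of the image of $\widetilde\delta(f)$ in $L^2\otimes L^2$ does not force $\phi_\eta(\widetilde\delta(f))=0$, and the passage to $[\widetilde\rho(f),S]=0$ is unjustified.

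The paper takes a different path that exploits the higher derivatives built into $\norm{\cdot}_R^\sim$ rather than a single commutator identity. It uses the series representation of Proposition \ref{deltahom}, $\rho_{(1-m)}(g)=\sum_{p\geq 0}\theta_p[m,\dotsc,m]\bigl(\delta_{A:B}^{(p)}(\rho(g))\bigr)$: from $\overline\delta_{A:B}(\widetilde\rho(f))=0$ and iterated closability of $\delta_{A:B}\otimes\text{id}^{\otimes p}$ it argues that $\projnorm{\delta_{A:B}^{(p)}(\rho(f_n))}_{(p+1)}\to 0$ for every $p\geq 1$, then combines this with a uniform tail estimate coming from $\norm{f_n}_R^\sim\leq C$ to deduce $\widetilde\rho_{(1-m)}(f)=\widetilde\rho(f)$ for all $\norm{m}<R$. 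Choosing $m=1-U_\epsilon$ with $\epsilon$ small places $\widetilde\rho(f)$ in $C^*(A\vee B)\cap C^*(U_\epsilon AU_\epsilon^*\vee B)$, and Corollary \ref{split} finishes. Your identity $\lambda_S\circ\delta_{A:B}=[\,\cdot\,,S]$ is essentially the $p=1$ term of this expansion; the paper's point is that controlling that single term is not enough, and one must propagate the vanishing through all $\delta_{A:B}^{(p)}$ via the Banach-algebra norm $\norm{\cdot}_R^\sim$.
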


\begin{proof}
It is clear from the definition of the norm $\vnorm_{R}^\sim$ that $\widetilde \rho$ maps $A \ttimes R B$ into $\frk D(\overline \delta_{A:B})$.  Suppose then that $f \in A \ttimes R B$, and $\overline \delta_{A:B}(\widetilde \rho(f)) = 0$.  Let $f_n \in A * B$ s.t. $f_n \to f$ in $A \ttimes R B$.  Then 
\begin{align*}
\lim_{n \to \infty} \delta_{A:B}\left( \rho(f_n)\right) = \overline \delta_{A:B} \left(\widetilde \rho(f)\right) = 0,
\end{align*}
the limit being taken in the projective tensor product norm $\vprojnorm_{(2)}$.  Since $(\delta_{A:B} \otimes \text{id})$ is closable, it follows that
\begin{equation*}
\lim_{n \to \infty} \projnorm{\delta_{A:B}^{(2)}(\rho(f_n))}_{(3)} = 0.
\end{equation*}
Iterating, we see that
\begin{equation*}
 \lim_{n \to \infty} \projnorm{\delta_{A:B}^{(p)}(\rho(f_n))}_{(p+1)} = 0
\end{equation*}
for all $p \geq 0$.  Let $m \in M$, $\norm{m} < R$.  Since $f_n \to f$ in $A \ttimes R B$, it follows that $\norm{f_n}_{R}^\sim \leq C$, where $C$ is a constant which does not depend on $n$.  Given $\epsilon > 0$, find $P$ such that 
\begin{equation*}
 C \frac{(\norm{m}/R)^P}{1-(\norm{m}/R)} < \epsilon.
\end{equation*}
Then find $N$ such that $n \geq N$ implies
\begin{equation*}
 \sum_{p = 1}^{P-1} \projnorm{\delta_{A:B}^{(p)}(\rho(f_n))}_{(p+1)}\norm{m}^p < \epsilon.
\end{equation*}
We then have for $n \geq N$,
\begin{align*}
\norm{\rho_{(1-m)}(f_n) - \rho(f_n)} &= \norm{\sum_{p \geq 1} \theta_p[m,\dotsc,m] \left(\delta_{A:B}^{(p)}(\rho(f_n))\right)}\\
&\leq \sum_{p = 1}^{(P-1)}\norm{m}^p \projnorm{\delta_{A:B}^{(p)}(\rho(f_n))}_{(p+1)} + \sum_{p \geq P} \norm{m}^p CR^{-p}\\
&< 2\epsilon.
\end{align*}
It follows that
\begin{equation*}
 \widetilde \rho_{(1-m)}(f) - \widetilde \rho(f) = \lim_{n \to \infty} \rho_{(1-m)}(f_n) - \rho(f_n) = 0.
\end{equation*}
Now let $S$ be a $(0,1)$-semicircular element in $M$ which is independent from $B$ and $B$-freely independent from $A$.  Take $\epsilon > 0$ sufficiently small so that $\norm{U_\epsilon - 1} < R$, where $U_\epsilon = \exp(i\pi \epsilon S)$.  Then $\widetilde \rho_{U_\epsilon}(f) = \widetilde \rho(f)$, in particular $\widetilde \rho(f) \in C^*(A \vee B) \cap C^*(U_\epsilon AU_\epsilon \vee B)$.  By Corollary \ref{split}, we have $\widetilde \rho(f) \in B$.
\end{proof}

\begin{rmk}\label{half}
We recall the following from \cite{coalgebra}.  If $A$ is a unital C$^*$-algebra, the \textit{upper half-plane} of $A$ is defined as $\mb H_+(A) = \{T \in A: \text{Im }T \geq \epsilon 1 \text{ for some }\epsilon > 0\}$.  Similarly, the \textit{lower half-plane} of $A$ is defined as $\mb H_-(A) = \{T \in A: \text{Im }A \leq -\epsilon 1 \text{ for some }\epsilon > 0\}$. If $T \in \mb H_+(A)$, then $T$ is invertible, and $T^{-1} \in \mb H_-(A)$.  Moreover,
\begin{align*}
 \norm{T^{-1}} &\leq \epsilon^{-1} & \text{Im}(T^{-1}) &\leq - \left(\epsilon + \epsilon^{-1}\norm{T}^2\right)^{-1}.
\end{align*}

\end{rmk}

\begin{prop}
Let $1 \in B \subset M$ be a W$^*$-subalgebra, and let $1 \in A, C \subset M$ be $*$-subalgebras.  Assume $A$ and $C$ are $B$-free in $(M,E_B)$.  Suppose also that $|j(A:B)|_2 < \infty$.  Then there is a holomorphic function $F:\mb H_+(A) \times \mb H_+(C) \to B$ such that
\begin{equation*}
 E_{A \vee B} (a +c)^{-1} = (a + F(a,c))^{-1}
\end{equation*}
for $a \in \mb H_+(A)$, $c \in \mb H_+(C)$.
\end{prop}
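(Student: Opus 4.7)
My approach would be to combine the coalgebra morphism of Corollary~\ref{deltamorph} with the resolvent characterization of Proposition~\ref{derresolvents}, producing $F(a,c)$ as a kernel element of $\overline{\delta}_{A:B}$ analyzed via Proposition~\ref{deltaker}. Fix $a \in \mb H_+(A)$ and $c \in \mb H_+(C)$, and set $\alpha = (a+c)^{-1}$, $\beta = E_{A\vee B}(\alpha)$. Since $a+c \in \mb H_+(M)$ it is invertible in $C^*(A\vee B\vee C)$, and by Corollary~\ref{libmorph} we have $|j(A:B\vee C)|_2 = |j(A:B)|_2 < \infty$, so $\delta_{A:B\vee C}$ is closable by Remark~\ref{gradclos}. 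Proposition~\ref{closder} then gives $\alpha \in \frk D(\overline{\delta}_{A:B\vee C})$ with
\begin{equation*}
\overline{\delta}_{A:B\vee C}(\alpha) = -\varphi_1(\alpha)(a\otimes 1 - 1\otimes a)\varphi_2(\alpha),
\end{equation*}
using that $\delta_{A:B\vee C}(c) = 0$.

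Next I would extend the morphism identity of Corollary~\ref{deltamorph} to the closures via continuity of $E_{A\vee B}\otimes E_{A\vee B}$ on the projective tensor product: taking $\alpha_n \in A\vee B\vee C$ approximating $\alpha$ in the graph norm of $\overline{\delta}_{A:B\vee C}$, the elements $\beta_n := E_{A\vee B}(\alpha_n) \in A\vee B$ converge to $\beta$, and $\delta_{A:B}(\beta_n) = (E_{A\vee B}\otimes E_{A\vee B})(\delta_{A:B\vee C}(\alpha_n))$ is Cauchy in projective tensor norm, so closability of $\delta_{A:B}$ yields $\beta \in \frk D(\overline{\delta}_{A:B})$ with
\begin{equation*}
\overline{\delta}_{A:B}(\beta) = -\varphi_1(\beta)\,\delta_{A:B}(a)\,\varphi_2(\beta).
\end{equation*}
Since $\alpha \in \mb H_-(M)$ by Remark~\ref{half} and $E_{A\vee B}$ preserves imaginary parts, $\beta \in \mb H_-(W^*(A\vee B))$ is invertible. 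Applying Proposition~\ref{closder} to the approximating $\beta_n$ and passing to the limit by closability gives $\beta^{-1} \in \frk D(\overline{\delta}_{A:B})$ with $\overline{\delta}_{A:B}(\beta^{-1}) = \delta_{A:B}(a)$, so that $F(a,c) := \beta^{-1} - a$ lies in $\ker\overline{\delta}_{A:B}$ and satisfies $E_{A\vee B}((a+c)^{-1}) = (a+F(a,c))^{-1}$ tautologically.

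The hardest step is showing $F(a,c) \in B$. I would invoke Proposition~\ref{deltaker}: since $a$ lies trivially in $\widetilde{\rho}(A\ttimes R B)$, it suffices to realize $\beta^{-1}$ as $\widetilde{\rho}(f)$ for some $f \in A\ttimes R B$ with $0<R<1$. The heuristic justification is that iterating $\overline{\delta}_{A:B}(\beta^{-1}) = \delta_{A:B}(a)$ should yield $\overline{\delta}_{A:B}^{(p)}(\beta^{-1}) = \delta_{A:B}^{(p)}(a) = a\otimes 1^{\otimes p} - 1\otimes a\otimes 1^{\otimes(p-1)}$, whose projective norm is uniformly bounded by $2\|a\|$, so that $\sum_{p\geq 0}\|\delta_{A:B}^{(p)}(\beta^{-1})\|^\wedge R^p \leq 2\|a\|/(1-R) < \infty$. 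The main technical obstacle is to rigorously produce an approximating sequence $g_k \in A * B$ Cauchy in $\|\cdot\|_R^\sim$ with $\rho(g_k) \to \beta^{-1}$ in $M$---in effect, to regularize the kernel element $F(a,c)$ compatibly with every iterated derivation. Once this realization is in hand, Proposition~\ref{deltaker} applied to $F(a,c) = \beta^{-1} - a \in \widetilde{\rho}(A\ttimes R B)\cap\ker\overline{\delta}_{A:B}$ forces $F(a,c) \in B$.

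The analyticity of $F:\mb H_+(A)\times\mb H_+(C) \to B$ is then routine: $(a,c) \mapsto (a+c)^{-1}$ is analytic into $M$ since inversion is analytic on invertible elements, composition with the bounded linear $E_{A\vee B}$ gives an analytic function with values in $W^*(A\vee B)$, $\beta(a,c)$ stays in the open subset $\mb H_-(W^*(A\vee B))$ where inversion is again analytic, and $F(a,c) = \beta^{-1} - a$ inherits analyticity from these compositions.
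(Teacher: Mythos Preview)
Your argument through the identification $F(a,c) = \beta^{-1} - a \in \Ker\overline{\delta}_{A:B}$ is correct and essentially matches the paper. The gap is precisely the step you flag: to invoke Proposition~\ref{deltaker} you must exhibit $F(a,c)$ as $\widetilde{\rho}(g)$ for some $g \in A\ttimes R B$, and your heuristic does not do this. Uniform bounds on $\projnorm{\overline{\delta}_{A:B}^{(p)}(\beta^{-1})}_{(p+1)}$ are statements about $\beta^{-1}$ as an element of $C^*(A\vee B)$; they do not by themselves furnish a preimage in the abstract completion $A\ttimes R B$, since there is no a priori reason the image of $\widetilde{\rho}$ should coincide with the set of elements whose iterated-derivative sums converge (and the paper explicitly notes $\widetilde{\rho}$ may fail to be injective). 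What is needed is a \emph{concrete} Cauchy sequence in $(A*B,\vnorm_R^\sim)$, and nothing in your sketch produces one.

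The paper supplies this via an explicit construction. By analyticity in $c$ it suffices to prove $F(a,c)\in B$ for $c$ in some open set, so one fixes $a$ and takes $c$ near $ix$ with $x\gg 0$. Then $(a+c)^{-1} = (ix)^{-1}\sum_{k\geq 0}\Gamma^k$ with $\Gamma=(ix)^{-1}(ix-a-c)\in A\vee B\vee C$, and one computes $\delta_{A:B\vee C}^{(p)}(\Gamma)$ explicitly with projective norm at most $2\norm{a}x^{-1}$, so that the preimage of $\Gamma$ has $\vnorm_R^\sim$-norm below $1/2$ for $x$ large. The crucial point is that $B$-freeness forces $E_{A\vee B}(A\vee B\vee C)\subset A\vee B$ \emph{algebraically}, so each $E_{A\vee B}(\Gamma^k)$ is $\rho(f_k)$ for a genuine $f_k\in A*B$; Corollary~\ref{deltamorph} transports the norm bounds, giving $\norm{f_k}_R^\sim<2^{-k}$. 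The series $\sum f_k$ then converges in the Banach algebra $A\ttimes R B$ to an element of norm $<1$, which can be inverted and manipulated inside $A\ttimes R B$ to realize $F(a,c)$ as $\widetilde{\rho}(g)$ for an explicit $g$. This constructive step, together with the analytic-continuation reduction to a convenient neighborhood, is what your outline is missing.
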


\begin{proof}
Let $a \in \mb H_+(A)$, $c \in \mb H_+(C)$, and let $\alpha = (a+c)^{-1}$.  By Proposition \ref{libmorph}, $|j(A:B \vee C)|_2 < \infty$, so $\delta_{A:B}$ and $\delta_{A:B \vee C}$ are closable in norm.  By Proposition \ref{closder}, $\alpha \in \frk D(\overline \delta_{A:B \vee C})$.  By Lemma \ref{derresolvents},
\begin{equation*}
\overline \delta_{A:B}(\alpha) = -\alpha(a\otimes 1 - 1\otimes a)\alpha.
\end{equation*}
It follows from Proposition \ref{deltamorph} that $\gamma = E_{B}(\alpha) \in \frk D(\overline \delta_{A:B})$ and
\begin{equation*}
 \overline \delta_{A:B}(\gamma) = -\gamma(a \otimes 1 - 1 \otimes a)\gamma.
\end{equation*}
Since $\alpha \in \mb H_-(M)$, it follows that also $\gamma \in \mb H_-(M)$, in particular $\gamma$ is invertible.  By Lemma \ref{derresolvents}, $\gamma = (a+n)^{-1}$ for some $n \in \Ker \overline \delta_{A:B}$. 

Setting $F(a,c) =n$, it is clear that $F(a,c)$ depends analytically on $(a,c)$, it remains only to show that $F(a,c) \in B$.  Fix $a \in \mb H_+(A)$ and denote $F_a(c) = F(a,c)$ for $c \in \mb H_+(C)$.  Since $F_a: \mb H_+(C) \to M$ is holomorphic, it suffices to show that $F_a(c) \in B$ for $c$ in some open subset of $\mb H_+(C)$.  

Fix $0 < R < 1$ and choose $x$ sufficiently large so that $2\norm{a}(1-R)^{-1}x^{-1} < 1/2$. Let 
\begin{equation*}
 \Omega = \{c \in \mb H_+(C): \norm{c - ix} < \norm{a}\}.
\end{equation*}
Given $c \in \Omega$, we have
\begin{equation*}
 (a+c)^{-1} = ((ix)(1-\Gamma))^{-1} = (ix)^{-1}\sum_{k \geq 0} \Gamma^k,
\end{equation*}
where $\Gamma = (ix)^{-1}(ix - a - c)$.  Note that $\norm{\Gamma} < 2\norm{a}x^{-1}$.  For $p \geq 1$ we have
\begin{equation*}
\delta_{A:B \vee C}^{(p)}(\Gamma) = (ix)^{-1}\left(1 \otimes a \otimes 1^{\otimes (p-1)} - a \otimes 1^{\otimes p}\right),
\end{equation*}
so that
\begin{equation*}
 \projnorm{\delta_{A:B\vee C}^{(p)}(\Gamma)}_{(p+1)} \leq 2 \norm{a}x^{-1}.
\end{equation*}
Letting $P \in A * B$ such that $\rho(P) = \Gamma$, it follows that $f \in A *_R (B \vee C)$ and
\begin{equation*}
\norm{P}_{A *_R (B \vee C)} < 2\norm{a}x^{-1}(1-R)^{-1} < 1/2.
\end{equation*}
Since $A *_R (B \vee C)$ is a Banach algebra, we have
\begin{equation*}
 \norm{P^k}_{A *_R (B \vee C)} < 2^{-k}
\end{equation*}
for $k \geq 1$.  Let $f_k \in A *B$ be such that $\rho(f_k) = E_{A \vee B}(\Gamma^k)$, by Proposition \ref{deltamorph} we have 
\begin{equation*}
 \norm{f_k}_R^\sim< 2^{-k}
\end{equation*}
for $k \geq 1$.  It follows that $\sum_{k \geq 1} f_k$ converges in $A \ttimes R B$ to a limit $f$ with $\norm{f}^\sim_R < 1$.  Let $g = (ix)^{-1}(1+f)^{-1} - a \in A \ttimes R B$, then
\begin{equation*}
F_a(c) = \widetilde \rho(f),
\end{equation*}
so that $\overline \delta_{A:B}(\widetilde \rho(f)) = \overline \delta_{A:B}(F_a(c)) = 0$.  By Proposition \ref{deltaker}, $F_a(c) \in B$.

\end{proof}

\begin{rmk}
We may now remove the condition on the liberation gradient.
\end{rmk}

\begin{thm}
Let $1 \in B \subset M$ be a $W^*$-subalgebra, and let $1 \in A, C \subset M$ be $*$-subalgebras.  Assume $A$ and $C$ are $B$-free in $(M,E_B)$.  Then there is a holomorphic function $F:\mb H_+(A) \times \mb H_+(C) \to B$ such that
\begin{equation*}
 E_{A \vee B} (a +c)^{-1} = (a + F(a,c))^{-1}
\end{equation*}
for $a \in \mb H_+(A)$, $c \in \mb H_+(C)$.
\end{thm}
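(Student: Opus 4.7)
The plan is to follow the scheme of the proof of the previous Proposition, but to bypass its use of Proposition \ref{deltaker} (which required the closability of $\delta_{A:B}$ guaranteed by $|j(A:B)|_2 < \infty$) by running the entire argument inside the Banach algebra $A\widetilde{*}_R B$. First I would define $F(a,c) = (E_{A \vee B}(a+c)^{-1})^{-1} - a$ for $(a,c) \in \mb H_+(A)\times\mb H_+(C)$; since $E_{A\vee B}(a+c)^{-1}\in\mb H_-(W^*(A\vee B))$ is invertible with analytic inverse, $F$ is analytic as a map into $W^*(A\vee B)$. Because $B$ is closed in $W^*(A\vee B)$ and $\mb H_+(A)\times\mb H_+(C)$ is connected, it is enough to prove $F(a,c)\in B$ on a nonempty open subset and then invoke analytic continuation. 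For large $x>0$ and $c$ in $\Omega = \{c\in\mb H_+(C):\|c-ix\|<\|a\|\}$, the series $(a+c)^{-1} = (ix)^{-1}\sum_k\Gamma^k$ with $\Gamma = (ix)^{-1}(ix-a-c)$ converges; as in the previous Proposition the coalgebra morphism of Corollary \ref{deltamorph} produces a lift $g'\in A\widetilde{*}_R B$ of $\gamma = E_{A\vee B}(a+c)^{-1}$, invertible in the Banach algebra, so that $g = (g')^{-1}-a\in A\widetilde{*}_R B$ satisfies $\widetilde\rho(g) = F(a,c)$.

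The key observation is that the iterated derivatives $\delta_{A:B}^{(p)}$ extend by continuity, directly from the estimate $\projnorm{\delta_{A:B}^{(p)}(\rho(f))}_{(p+1)} R^p \leq \norm{f}_R^\sim$, to bounded Leibniz derivations $\delta^{(p)}[\cdot]:A\widetilde{*}_R B \to M^{\widehat{\otimes}(p+1)}$, without requiring closability of $\delta_{A:B}$ on $W^*(A\vee B)$. Lifting the coalgebra morphism of Corollary \ref{deltamorph} to $A\widetilde{*}_R(B\vee C) \to A\widetilde{*}_R B$ and combining with the resolvent identity $\delta_{A:B\vee C}[\alpha] = -\alpha(a\otimes 1 - 1\otimes a)\alpha$ (valid in $A\widetilde{*}_R(B\vee C)$ by the Leibniz rule applied to $\alpha^{-1}$), we obtain $\delta[g'] = -g'(a\otimes 1 - 1\otimes a)g'$, and hence
\[
\delta[g] = -(g')^{-1}\delta[g'](g')^{-1} - \delta[a] = (a\otimes 1 - 1\otimes a) - (a\otimes 1 - 1\otimes a) = 0.
\]
Iterating gives $\delta^{(p)}[g] = 0$ for all $p\geq 1$.

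Proposition \ref{deltahom}, extended by continuity to $A\widetilde{*}_R B$, then yields $\widetilde\rho_{1-m}(g) = \sum_p\theta_p[m,\dotsc,m](\delta^{(p)}[g]) = \widetilde\rho(g)$ for every $m\in M$ with $\norm{m}\leq R$. Taking $m = 1-U_\epsilon$ for $U_\epsilon = \exp(i\pi\epsilon S)$ as in Corollary \ref{split} (choosing $\epsilon$ small so that $\norm{U_\epsilon-1}<R$), we deduce $\widetilde\rho(g) = \widetilde\rho_{U_\epsilon}(g) \in C^*(U_\epsilon(A\vee B)U_\epsilon^*) = C^*(U_\epsilon AU_\epsilon^*\vee B)$, and together with $\widetilde\rho(g)\in C^*(A\vee B)$ and Corollary \ref{split} this gives $F(a,c) = \widetilde\rho(g)\in B$ for every $c\in\Omega$. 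Analytic continuation extends this to all of $\mb H_+(A)\times\mb H_+(C)$. The step I expect to require the most care is the rigorous justification of the formal derivation identity $\delta[g] = 0$ inside $A\widetilde{*}_R B$---in particular, the extension of $\delta^{(p)}$ to the completion as a Leibniz derivation and the lifting of the coalgebra morphism of Corollary \ref{deltamorph} to the Banach algebras $A\widetilde{*}_R B$ and $A\widetilde{*}_R(B\vee C)$---since this is precisely the point at which the closability hypothesis entered in the previous Proposition.
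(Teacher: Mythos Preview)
Your approach is genuinely different from the paper's. The paper removes the hypothesis $|j(A:B)|_2<\infty$ by a regularization-and-limit argument: it replaces $a$ by $U_\epsilon a U_\epsilon^*$ (so that $|j(U_\epsilon A U_\epsilon^*:B)|_2<\infty$), applies the preceding Proposition to obtain $b_\epsilon\in B$ with $E_{U_\epsilon A U_\epsilon^*\vee B}(U_\epsilon a U_\epsilon^*+c)^{-1}=(U_\epsilon a U_\epsilon^*+b_\epsilon)^{-1}$, and then uses half-plane estimates and free Markovianity to show $b_\epsilon\to F(a,c)$. You instead try to rerun the Proposition's proof directly inside $A\ttimes R B$, hoping that the Banach-algebra bookkeeping substitutes for closability.

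There is, however, a real gap at the step ``Iterating gives $\delta^{(p)}[g]=0$ for all $p\geq 1$.'' You are correct that each $\delta^{(p)}$ extends by continuity to a bounded map $A\ttimes R B\to M^{\widehat\otimes(p+1)}$, and that the iterated Leibniz identity
\[
\delta^{(p)}[fg]=\sum_{k=0}^p\bigl(\delta^{(k)}[f]\otimes 1^{\otimes(p-k)}\bigr)\bigl(1^{\otimes k}\otimes\delta^{(p-k)}[g]\bigr)
\]
persists on the completion. What does \emph{not} pass to the completion is the recursion $\delta^{(p+1)}=(\delta\otimes\text{id}^{\otimes p})\circ\delta^{(p)}$: the inner map lands in $M^{\widehat\otimes(p+1)}$, on which $\delta\otimes\text{id}^{\otimes p}$ is neither bounded nor, in the absence of your dropped hypothesis, closable. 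Concretely, if $g_n\to g$ in $A\ttimes R B$ and $\delta[g]=0$, you know $\delta(\rho(g_n))\to 0$ in $M^{\widehat\otimes 2}$ and that $(\delta\otimes\text{id})(\delta(\rho(g_n)))$ is Cauchy in $M^{\widehat\otimes 3}$; concluding that this Cauchy limit is $0$ is exactly the statement that $\delta\otimes\text{id}$ is closable. This is precisely the point in Proposition~\ref{deltaker} where the paper invokes $|j(A:B)|_2<\infty$, so you have not actually bypassed it. A repair might be possible by computing each $\delta^{(p)}[g]$ directly from the iterated Leibniz formula together with the explicit values $\delta^{(p)}[\alpha^{-1}]=\delta^{(p)}[a+c]$ and a lift of Corollary~\ref{deltamorph} intertwining \emph{all} the $\delta^{(p)}$ (not just $\delta$), but that is considerably more than ``iterating'' and would have to be written out.
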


\begin{proof}
Let $a \in \mathbb H_+(A), c \in \mathbb H_+(C)$, and set
\begin{equation*}
 F(a,c) = \left(E_{A \vee b}(a+c)^{-1}\right)^{-1} - a,
\end{equation*}
we must show that $F(a,c) \in B$.  Clearly $F(a,c)$ depends analytically on $(a,c)$, hence it suffices to show that $F(a,c) \in B$ for $(a,c)$ in some open subset of $\mb H_+(A) \times \mb H_+(C)$.  Let
\begin{equation*}
\Omega = \{(a,c) \in \mb H_+(A) \times \mb H_+(C): \norm{a - i} < 1/2, \norm{c - Ki} < 1/2\},
\end{equation*}
where $K \gg 0$.

Now let $S$ be a $(0,1)$-semicircular element in $M$ which is freely independent from $A \vee B \vee C$.  For $0 < \epsilon < 1$ let $U_\epsilon = \exp(i\pi\epsilon S)$.  By Proposition \ref{conjsemicirc}, $|\xi(U_\epsilon:\C)|_2 < \infty$.  Hence by Proposition \ref{libconj}, $|j(U_\epsilon A U_\epsilon^*:B)|_2 < \infty$.

So fix $(a,c) \in \Omega$, by the proposition there are $b_\epsilon \in B$ for $0 < \epsilon < 1$ such that
\begin{equation*}
 E_{U_\epsilon A U_\epsilon^* \vee B} (U_\epsilon a U_\epsilon^* + c)^{-1} = (a + b_\epsilon)^{-1}.
\end{equation*}
Now since $(a,c) \in \Omega$, we have
\begin{align*}
 \norm{U_\epsilon aU_\epsilon^* +c} &\leq K+2 & \text{Im}(U_\epsilon a U_\epsilon^* + c) \geq K.
\end{align*}
It follows from \ref{half} that
\begin{align*}
 \norm{(U_\epsilon a U_\epsilon^* + c)^{-1}} &\leq K^{-1} & \text{Im}(U_\epsilon a U_\epsilon^* + C)^{-1} \leq - (K + (K+2)^2/K)^{-1}.
\end{align*}
Therefore
\begin{align*}
\norm{(U_\epsilon a U_\epsilon^* + b_\epsilon)^{-1}} &\leq K^{-1} & \text{Im}(U_\epsilon a U_\epsilon^* + b_\epsilon)^{-1} &\leq -(K+(K+2)^2/K)^{-1}.
\end{align*}
Applying \ref{half} once more, we see
\begin{equation*}
 \text{Im}(U_\epsilon a U_\epsilon^* + b_\epsilon) \geq \left((K+(K+2)^2/K)^{-1} + (K+(K+2)^2/K)K^{-2}\right)^{-1} = \frac{K + \frac{(K+2)^2}{K}}{3 + \frac{K^2}{(K+2)^2} + \frac{(K+2)^2}{K^2}}.
\end{equation*}
For $K$ sufficiently large, this is greater than 2, from which it follows that $\text{Im}(b_\epsilon) > 1/2$ for $0 < \epsilon < 1$.  In this case, it follows from \ref{half} that
\begin{equation*}
 \norm{(a+b_\epsilon)^{-1}} \leq C,
\end{equation*}
for some finite constant $C$ which does not depend on $\epsilon$.  Hence
\begin{equation*}
 \lim_{\epsilon \to 0} \norm{(U_\epsilon a U_\epsilon^* + b_\epsilon)^{-1} - (a + b_\epsilon)^{-1}} = 0,
\end{equation*}
and therefore
\begin{equation*}
 \lim_{\epsilon \to 0} \norm{(a + b_\epsilon)^{-1} - E_{A \vee B} (U_\epsilon a U_\epsilon^* + b_\epsilon)^{-1}} = 0.
\end{equation*}

An application of \cite[Lemma 3.3]{mutual} shows that $A \vee B$, $C$ and $S$ are $B$-free, and another application shows that $A \vee B$, $U_\epsilon A U_\epsilon^* \vee B$, $U_\epsilon AU_\epsilon^* \vee B \vee C$ is a freely Markovian triple.  By \cite[Lemma 3.7]{mutual}, we have
\begin{equation*}
 E_{A \vee B}E_{U_\epsilon AU_\epsilon^* \vee B} E_{U_\epsilon A U_\epsilon^* \vee B \vee C} = E_{A \vee B} E_{U_\epsilon A U_\epsilon^* \vee B \vee C}.
\end{equation*}
We therefore have
\begin{align*}
 E_{A \vee B} (a+c)^{-1} &= \lim_{\epsilon \to 0} E_{A \vee B}(U_\epsilon aU_\epsilon^* + c)^{-1}\\
&= \lim_{\epsilon \to 0} E_{A \vee B} E_{U_\epsilon A U_\epsilon^* \vee B}(U_\epsilon a U_\epsilon^* + c)^{-1}\\
&= \lim_{\epsilon \to 0} E_{A \vee B}(U_\epsilon a U_\epsilon^* + b_\epsilon)^{-1}\\
&= \lim_{\epsilon \to 0} (a + b_\epsilon)^{-1}.
\end{align*}
It follows that $b_\epsilon$ converges as $\epsilon \to 0$ to 
\begin{equation*}
F(a,c) = \left(E_{A \vee B}(a+c)^{-1}\right)^{-1} - a,
\end{equation*}
hence $F(a,c) \in B$ which completes the proof.
\end{proof}

\section{Analytic subordination for $B$-free multiplicative convolution of unitaries}
\noindent In this section we use the derivation $d_{U:B}$ to prove the analytic subordination result for multiplication of $B$-freely independent unitaries, where $B$ is a general W$^*$-algebra of constants.

\begin{rmk}
Let $1 \in B \subset M$ be a W$^*$-subalgebra, and let $B\langle t \rangle$ denote the algebra of noncommutative polynomials with coefficients in $B$.  Given any $m \in M$, there is a unique homomorphism from $B\langle t \rangle$ into $M$ which is the identity on $B$ and sends $t$ to $m$, which we will denote by $f \mapsto f(m)$.
\end{rmk}

\begin{rmk}
Recall that if $U \in M$ is a unitary, $d_{U:B}:B\langle U,U^* \rangle \to B\langle U,U^*\rangle \otimes B\langle U,U^* \rangle$ is the derivation determined by
\begin{align*}
 d_{U:B}(U) &= 1 \otimes U,\\
d_{U:B}\left(U^*\right) &= -U^* \otimes 1,\\
d_{U:B}(b) &= 0, \ms \ms\ms\ms\ms\ms\ms\ms\ms\ms\ms\ms (b \in B).
\end{align*}
Define $d_{U:B}^{(p)}:B\langle U,U^*\rangle \to (B\langle U,U^* \rangle)^{\otimes (p+1)}$ recursively by $d_{U:B}^{(0)} = \text{id}$, 
\begin{equation*}
 d_{U:B}^{(p+1)} = (d_{U:B} \otimes \text{id}^{\otimes p})\circ d_{U:B}^{(p)}.
\end{equation*}
\end{rmk}

\begin{rmk}
Fix a unitary $U \in M$ which is algebraically free from $B$.  Define a norm $\vnorm^\sim_{R,U}$ on $B \langle t \rangle$ by
\begin{equation*}
 \norm{f}_{R,U}^\sim = \sum_{p \geq 0} \projnorm{d_{U:B}^{(p)}(f(U))}_{(p+1)},
\end{equation*}
where $\vprojnorm_{(s)}$ denotes the projective tensor product norm on $M^{\widehat \otimes s}$.
\begin{lem*}
$\vnorm_{R,U}^\sim$ is a finite norm on $B\langle t \rangle$, and if $f,g \in B \langle t \rangle$ then
\begin{equation*}
 \norm{fg}_{R,U}^\sim \leq \norm{f}_{R,U}^\sim \norm{g}_{R,U}^\sim.
\end{equation*}

\end{lem*}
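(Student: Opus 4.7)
The plan is to adapt the argument given for the analogous lemma in Remark~\ref{deltanorm}, substituting $d_{U:B}$ for $\delta_{A:B}$ and $B\langle t \rangle$ for $A * B$.  The two ingredients will be a Leibniz-type identity for the iterated derivatives $d_{U:B}^{(p)}$ together with submultiplicativity of the projective tensor norm, and the observation that $B \subset \Ker d_{U:B}$ combined with $d_{U:B}(U) = 1\otimes U$ forces $d_{U:B}^{(p)}(f(U))$ to vanish once $p$ exceeds the $U$-degree of $f$.

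For submultiplicativity, I would first prove by induction on $p$, using that $d_{U:B}$ is a derivation, the formula
\begin{equation*}
d_{U:B}^{(p)}\bigl(f(U)g(U)\bigr) = \sum_{k=0}^{p} \left(d_{U:B}^{(k)}(f(U)) \otimes 1^{\otimes (p-k)}\right)\left(1^{\otimes k} \otimes d_{U:B}^{(p-k)}(g(U))\right).
\end{equation*}
The desired inequality then drops out exactly as in Remark~\ref{deltanorm}: bound the $(p+1)$-fold projective norm of each summand by the product of the projective norms of the two factors (using that on elementary tensors the projective norm is multiplicative), then reorganize the double sum as a Cauchy product in $p$ to recover $\norm{f}^\sim_{R,U}\norm{g}^\sim_{R,U}$.

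For finiteness of $\vnorm^\sim_{R,U}$ on $B\langle t \rangle$, the key observation is that since $d_{U:B}(b) = 0$ for $b \in B$, the derivation identity forces each application of $d_{U:B}$ on a monomial $b_0 U b_1 \dotsb U b_n$ to differentiate one of the factors of $U$.  Iterating and using $d_{U:B}(1) = 0$, one sees that $d_{U:B}^{(p)}$ applied to such a monomial with $n$ copies of $U$ vanishes once $p > n$.  Hence the series defining $\norm{f}^\sim_{R,U}$ is really a finite sum for each $f \in B\langle t \rangle$, and each surviving summand is manifestly finite.  Positive definiteness comes from the $p=0$ term, which equals $\norm{f(U)}$, combined with algebraic freeness of $U$ from $B$ to recover $f$ from $f(U)$.

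I do not foresee any serious obstacle, since the proof is essentially a transcription of Remark~\ref{deltanorm}.  The only subtlety worth flagging is that the algebra here is $B\langle t \rangle$ rather than $B\langle U,U^*\rangle$; if one attempted the same definition with $U^*$ included, the nonterminating iterated derivatives $d_{U:B}^{(p)}(U^*) = \pm\, U^*\otimes 1^{\otimes p}$ would need to be tamed by the weight $R^p$ (implicit in the notation $\vnorm^\sim_{R,U}$), whereas for polynomials in $t$ alone the sums terminate automatically.
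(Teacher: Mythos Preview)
Your proposal is correct and follows exactly the approach the paper intends: the paper's own proof is simply the sentence ``The proof is the same as the argument for $\delta$ given in (\ref{deltanorm}),'' and you have faithfully transcribed that argument, including the Leibniz identity for $d_{U:B}^{(p)}$ and the Cauchy-product estimate. Your additional remarks on termination of the series for polynomials in $t$ alone (via $d_{U:B}^{(2)}(U)=0$) and on positive definiteness via algebraic freeness are correct and fill in details the paper leaves implicit.
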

\noindent The proof is the same as the argument for $\delta$ given in (\ref{deltanorm}).

\end{rmk}

\begin{rmk}
Let $B_{R,U}^\sim\{t\}$ denote the completion of $B\langle t \rangle$ under $\vnorm_{R,U}^\sim$.  The map sending $f \in B \langle t \rangle$ to $f(U)$ extends to a contractive homomorphism from $B_{R,U}^\sim$ into $M$, which we will still denote by $f \mapsto f(U)$.
\end{rmk}

\begin{rmk}
Similarly to $\delta$, $d_{U:B}$ is related to the homomorphism $f \mapsto f((1+m)U)$, $f \in B \langle t \rangle$, where $m \in M$ is fixed.  Recall that if $m_1,\dotsc,m_s \in M$ are given, $\theta_s[m_1,\dotsc,m_s]: M^{\otimes (s+1)} \to M$ is the linear map determined by
\begin{equation*}
 \theta_p[m_1,\dotsc,m_s](m'_1 \otimes \dotsb \otimes m'_{s+1}) = m'_1m_1 m'_2 \dotsb m_sm'_{s+1}.
\end{equation*}

\end{rmk}

\begin{prop}
Fix $m \in M$, then for $f \in B\langle t \rangle$ we have
\begin{equation*}
 f((1+m)U) = \sum_{p \geq 0} \theta_p[m,\dotsc,m]\left(d_{U:B}^{(p)}(f(U))\right).
\end{equation*}
In particular, if $\norm{m} \leq R$ then $f \mapsto f((1+m)U)$ extends to a contractive homomorphism from $B_{R,U}\{t\}$ into $M$, which we will also denote by $f \mapsto f((1+m)U)$.
\end{prop}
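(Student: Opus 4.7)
The plan is to follow the proof of Proposition \ref{deltahom} essentially verbatim, replacing $\delta_{A:B}$ by $d_{U:B}$ and the homomorphism $\rho_{(1-m)}$ by the evaluation $f \mapsto f((1+m)U)$. First I will verify absolute convergence of the series in the uniform norm using the standard bound $\norm{\theta_p[m,\dotsc,m](\xi)} \leq \norm{m}^p \projnorm{\xi}_{(p+1)}$, so that the series is dominated by $\sum_{p \geq 0} \norm{m}^p \projnorm{d_{U:B}^{(p)}(f(U))}_{(p+1)}$, which is finite by the submultiplicativity lemma in the preceding remark. Denote the sum by $\varphi(f) \in M$.

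Next I will check that $\varphi$ is a homomorphism from $B\langle t \rangle$ into $M$. This follows from the iterated Leibniz identity
\begin{equation*}
d_{U:B}^{(p)}\bigl(f(U)g(U)\bigr) = \sum_{k=0}^{p} \bigl(d_{U:B}^{(k)}(f(U)) \otimes 1^{\otimes(p-k)}\bigr)\bigl(1^{\otimes k} \otimes d_{U:B}^{(p-k)}(g(U))\bigr),
\end{equation*}
combined with the multiplicative splitting
\begin{equation*}
\theta_p[m,\dotsc,m]\bigl((\xi \otimes 1^{\otimes(p-k)})(1^{\otimes k} \otimes \eta)\bigr) = \theta_k[m,\dotsc,m](\xi)\cdot \theta_{p-k}[m,\dotsc,m](\eta),
\end{equation*}
which is immediate from the definition of $\theta_p$ on elementary tensors. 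A Cauchy-product rearrangement, legitimate by the absolute convergence established in step one, then yields $\varphi(fg) = \varphi(f)\varphi(g)$.

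Then I will identify $\varphi$ with $f \mapsto f((1+m)U)$ on the generators of $B\langle t \rangle$. For $b \in B$ all of $d_{U:B}^{(p)}(b)$ with $p \geq 1$ vanish, so $\varphi(b) = b$. For $t$, the formulas $d_{U:B}(U) = 1 \otimes U$ and $d_{U:B}(1) = 0$ give $d_{U:B}^{(p)}(U) = 0$ for $p \geq 2$, whence $\varphi(t) = U + \theta_1[m](1 \otimes U) = U + mU = (1+m)U$. Since two homomorphisms out of $B\langle t \rangle$ which agree on $B$ and at $t$ must coincide, $\varphi(f) = f((1+m)U)$ for every $f$. Finally, when $\norm{m} \leq R$, the bound $\norm{\varphi(f)} \leq \norm{f}_{R,U}^{\sim}$ shows the evaluation is $\vnorm_{R,U}^{\sim}$-contractive, hence extends by continuity to a contractive homomorphism on $B_{R,U}^\sim\{t\}$.

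I do not expect any genuine obstacle; the argument is structurally identical to that of Proposition \ref{deltahom}. The only care needed is in handling the $B$-bimodule structure on the tensor products, but the $B$-balancing is compatible with both $\theta_p$ and the Leibniz rule, so the algebra goes through without modification.
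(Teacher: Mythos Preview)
Your proof is correct and follows the same route as the paper's. The only cosmetic difference is that the paper first observes the series has only finitely many nonzero terms for polynomial $f$ (so convergence for arbitrary $m \in M$ is immediate), rather than invoking the norm estimate at that stage; the norm bound is then used, exactly as you do, only for the contractive extension to $B_{R,U}^\sim\{t\}$.
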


\begin{proof}
First observe that the right hand side has only finitely many nonzero terms, so convergence is not an issue.  Let $\varphi(f)$ denote the right hand side.  Repeating the argument from Proposition \ref{deltahom}, we see that $\varphi$ is a homomorphism from $B \langle t \rangle$ into $M$.  Since $\varphi(b) = b$ for $b \in B$, and 
\begin{equation*}
 \varphi(t) = (1+m)U,
\end{equation*}
it follows that $\varphi(f) = f((1+m)U)$ as claimed.  For $f \in B\langle t \rangle$ and $\norm{m} \leq R$, we then have
\begin{align*}
 \norm{f((1+m)U)} &\leq \sum_{p \geq 0} \norm{\theta_p[m,\dotsc,m]\left(d_{U:B}^{(p)}f(U)\right)}\\
&\leq \sum_{p \geq 0} \norm{m}^p\projnorm{d_{U:B}^{(p)}f(U)}_{(p+1)}\\
&\leq \norm{f}_{R,U}^\sim.
\end{align*}
So $f \mapsto f((1+m)U)$ extends to a contractive homomorphism on $B_{R,U}^\sim\{t\}$ as claimed.
\end{proof}

\begin{rmk}
Recall that $\xi(U:B)$ is determined by $\xi(U:B) \in L^1(W^*(B \langle U,U^* \rangle))$ and
\begin{equation*}
 \tau\left(\xi(U:B)m\right) = (\tau \otimes \tau)\left(d_{U:B}(m)\right) \ms \ms \ms \ms m \in B \langle U,U^* \rangle.
\end{equation*}
Voiculescu has proved that the existence of $\xi(U:B) \in L^2(B \langle U,U^* \rangle)$ is a sufficient condition for the closability of $d_{U:B}$ when viewed as an unbounded operator
\begin{equation*}
 d_{U:B}: L^2(W^*(B \langle U,U^* \rangle)) \to L^2(W^*(B \langle U,U^* \rangle) \otimes W^*(B \langle U,U^* \rangle)).
\end{equation*}
In particular, $|\xi(U:B)|_2 < \infty$ implies that $d_{U:B}$ is closable in the uniform norm, we will denote this closure by $\overline d_{U:B}$.
\end{rmk}

\begin{prop}\label{dker}
Suppose that $|\xi(U:B)|_2 < \infty$.  If $f \in B_{R,U}\{t\}$, then $f(U) \in \frk D(\overline d_{U:B})$.  Furthermore, if $R > 2$ and if $\overline d_{U:B}(f(U)) = 1 \otimes f(U)$, then $f(U) = Ub$ for some $b \in B$.
\end{prop}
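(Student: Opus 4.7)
My plan for proving Proposition \ref{dker} is to mimic the strategy of Proposition \ref{deltaker}, modified to account for the hypothesis $\overline d_{U:B}(f(U)) = 1\otimes f(U)$ rather than $=0$. The first assertion is immediate from the definition of $\norm{\ms}_{R,U}^\sim$: if $f_n \in B\langle t\rangle$ converges to $f$, then $f_n(U)$ converges uniformly and $d_{U:B}(f_n(U))$ is Cauchy in $\projnorm{\ms}_{(2)}$, hence in uniform norm on $M\otimes M$, so $f(U) \in \frk D(\overline d_{U:B})$ with $\overline d_{U:B}(f(U)) = \lim d_{U:B}(f_n(U))$.

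For the main assertion, the key intermediate step will be the identity
\begin{equation*}
f((1+m)U) = (1+m)f(U),\quad m\in M,\ \norm{m}<R.
\end{equation*}
I plan to derive this by first showing, via iterated closability, that for $f_n\to f$ with $f_n$ polynomial, the Cauchy limits $\Psi_p := \lim_n d_{U:B}^{(p)}(f_n(U))$ in $\projnorm{\ms}_{(p+1)}$ satisfy $\Psi_0 = f(U)$, $\Psi_1 = 1\otimes f(U)$, and $\Psi_p = 0$ for all $p\geq 2$. The recursion $d_{U:B}^{(p)} = (d_{U:B}\otimes\text{id}^{\otimes(p-1)})\circ d_{U:B}^{(p-1)}$ combined with closability of $d_{U:B}\otimes\text{id}^{\otimes(p-1)}$ yields $\Psi_p = (\overline d_{U:B}\otimes\text{id}^{\otimes(p-1)})(\Psi_{p-1})$; since $\overline d_{U:B}(1)=0$, the vanishing follows inductively starting at $p=2$. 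Feeding this into the series formula $f_n((1+m)U) = \sum_p\theta_p[m,\dotsc,m](d_{U:B}^{(p)}(f_n(U)))$ and passing to the limit, with tails controlled uniformly by $\sum_{p\geq P}(\norm{m}/R)^p\sup_n\norm{f_n}_{R,U}^\sim$, gives the displayed identity. This iterated closability step is the main technical obstacle: because $d_{U:B}\otimes\text{id}^{\otimes(p-1)}$ is only closable, not bounded, care is needed to align the available norm convergence with the closability conclusion.

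To conclude, set $A = B\langle U,U^*\rangle$ and $g = U^{-1}f(U) \in W^*(A)$. Enlarging $(M,\tau)$ if necessary, I would take a $(0,1)$-semicircular $S$ classically independent from $B$ and $B$-freely independent from $A$, and put $U_\epsilon = \text{exp}(i\pi\epsilon S)$. For $\epsilon>0$ sufficiently small, the element $m_\epsilon := U_\epsilon U U_\epsilon^* U^{-1}-1$ has $\norm{m_\epsilon}\leq 2\norm{U_\epsilon-1}<R$ (the hypothesis $R>2$ providing generous slack). Plugging $m_\epsilon$ into the displayed identity produces $f(U_\epsilon U U_\epsilon^*) = (U_\epsilon U U_\epsilon^*)\,g$. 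Separately, the polynomial identity $f(U_\epsilon U U_\epsilon^*) = U_\epsilon f(U) U_\epsilon^*$, valid since $U_\epsilon$ commutes with $B$, extends to $f\in B_{R,U}^\sim\{t\}$ by continuity. Equating these two expressions and simplifying shows that $g$ commutes with $U_\epsilon$, so $g\in W^*(A)\cap W^*(U_\epsilon A U_\epsilon^*)$, which equals $B$ by Corollary \ref{split}. Hence $f(U) = Ug = Ub$ with $b\in B$.
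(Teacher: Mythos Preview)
Your argument is correct, and through the identity $f((1+m)U)=(1+m)f(U)$ it matches the paper's proof up to that point. The divergence is in the final step. The paper simply plugs in $m=U^*-1$; since $\norm{U^*-1}\leq 2<R$ this is allowed, and $(1+m)U=1$ gives
\[
f(1)=U^*f(U),\qquad\text{hence}\quad f(U)=U\,f(1),
\]
with $f(1)\in B$ because $B$ is closed and $f_n(1)\in B$ for polynomial $f_n$. This is the sole use of the hypothesis $R>2$.

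Your route instead transplants the semicircular-perturbation argument from Proposition \ref{deltaker}: you take $m_\epsilon=U_\epsilon UU_\epsilon^*U^{-1}-1$, combine $f(U_\epsilon UU_\epsilon^*)=(U_\epsilon UU_\epsilon^*)\,g$ with the conjugation identity $f(U_\epsilon UU_\epsilon^*)=U_\epsilon f(U)U_\epsilon^*$ (valid since $U_\epsilon$ commutes with $B$), deduce $[g,U_\epsilon]=0$, and then invoke Corollary \ref{split}. This works, and in fact never needs $R>2$: since $\norm{m_\epsilon}\to 0$, any $R>0$ suffices. So your argument proves a slightly stronger statement, at the cost of importing the $B$-valued liberation-gradient machinery of Section~3 and enlarging $(M,\tau)$ to accommodate a free semicircular. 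The paper's choice $m=U^*-1$ is the shortcut that the unitary setting affords over the freely Markovian one, avoiding all of that.
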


\begin{proof}
Let $f \in B_{R,U}\{t\}$, it is clear from the definition of $\vnorm_{R,U}^\sim$ that $f(U) \in \frk D(\overline d_{U:B})$, suppose then that $\overline d_{U:B}(f(U)) = 1 \otimes f(U)$.  Let $f_n \in B\langle t \rangle$, $\norm{f_n -f }_{R,U}^\sim \to 0$.  Since $(d_{U:B} \otimes \text{id})$ is closable, we have
\begin{equation*}
\lim_{n \to \infty} d_{U:B}^{(2)}(f_n) = 0,
\end{equation*}
with convergence in $\vnorm_{(3)}$.  Iterating, we see that
\begin{equation*}
 \lim_{n \to \infty} d_{U:B}^{(p)}(f_n) = 0,
\end{equation*}
for all $p \geq 2$.  

Now let $m \in M$, $\norm{m} < R$.  Since $f_n \to f \in \vnorm_{R,U}^\sim$, there is a finite constant $C > 0$ such that $\norm{f_n}_{R,U}^\sim < C$ for all $n \in \N$.  Let $\epsilon > 0$ and find $P \geq 2$ such that 
\begin{equation*}
 C\frac{(\norm{m}/R)^P}{1-\norm{m}/R} < \epsilon.
\end{equation*}
Now find $N$ such that $n \geq N$ implies
\begin{equation*}
 \sum_{p = 2}^{P-1} \norm{m}^{p}\projnorm{d_{U:B}^{(p)}(f_n(U))}_{(p+1)} < \epsilon.
\end{equation*}
We then have, for $n \geq N$,
\begin{align*}
\norm{f_n((1+m)U) - \left(f_n(U) + \theta_1[m]\left(d_{U:B}(f_n(U))\right)\right)} < 2\epsilon.
\end{align*}
Taking limits, it follows that
\begin{equation*}
 f((1+m)U) = (1+m)f(U).
\end{equation*}
If $R > 2$, we can apply this to $m = U^* -1$ to find
\begin{equation*}
 f(1) = U^*f(U).
\end{equation*}
Since $f(1) \in B$, the result follows.
\end{proof}

\begin{rmk}\label{disk}
We will also use the following technical lemma from \cite{markovian}.

\begin{lem*}
 If $x \in A$, where $A$ is a unital C$^*$-algebra, the following are equivalent:
\begin{enumerate}
\renewcommand{\labelenumi}{(\roman{enumi})} 
\item $\norm{x} < 1$.
\item $1-x$ is invertible and $2 \text{Re}(1-x)^{-1} \geq (1 + \epsilon)$ for some $\epsilon > 0$.
\end{enumerate}

\end{lem*}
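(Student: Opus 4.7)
My plan is to reduce both implications to a single algebraic identity relating $2\,\text{Re}((1-x)^{-1})$ to $1-xx^*$. Specifically, assuming $1-x$ is invertible in $A$, I will establish
\begin{equation*}
2\,\text{Re}\bigl((1-x)^{-1}\bigr) - 1 \;=\; (1-x)^{-1}(1-xx^*)(1-x^*)^{-1}.
\end{equation*}
To verify this, I would first write $(1-x)^{-1} + (1-x^*)^{-1} = (1-x)^{-1}\bigl[(1-x^*) + (1-x)\bigr](1-x^*)^{-1}$, and then simplify using $(1-x^*) + (1-x) - (1-x)(1-x^*) = 1 - xx^*$. The right-hand side of the identity is visibly self-adjoint, as it must be.

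Granted the identity, the direction (i) $\Rightarrow$ (ii) is routine. If $\norm{x} < 1$, then $1-x$ is invertible by the Neumann series, and since $\norm{xx^*} = \norm{x}^2 < 1$ we have $1-xx^* \geq (1-\norm{x}^2)\cdot 1$. Sandwiching by $(1-x)^{-1}$ and $(1-x^*)^{-1}$ preserves being bounded below, since these are bounded invertible elements; so $2\,\text{Re}((1-x)^{-1}) - 1 \geq \delta\cdot 1$ for some explicit $\delta > 0$, e.g.\ $\delta = (1-\norm{x})/(1+\norm{x})$, using the crude bound $(1-x)(1-x^*) \leq (1+\norm{x})^2 \cdot 1$.

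For (ii) $\Rightarrow$ (i), assume $(1-x)^{-1}(1-xx^*)(1-x^*)^{-1} \geq \epsilon \cdot 1$ with $1-x$ invertible. Conjugating by the bounded elements $(1-x)$ and $(1-x^*)$ on either side yields $1-xx^* \geq \epsilon\norm{1-x}^{-2}\cdot 1 > 0$, so $\norm{xx^*} < 1$ and therefore $\norm{x}^2 = \norm{xx^*} < 1$.

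There is no real obstacle here: the entire content of the lemma is the single identity above, combined with the elementary C$^*$-algebra fact that whenever $b \in A$ is bounded and invertible and $p = p^* \in A$ satisfies $p \geq c\cdot 1$ for $c > 0$, the element $bpb^*$ is likewise bounded below by a positive multiple of the identity, and conversely.
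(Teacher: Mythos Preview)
Your argument is correct and complete. The key identity
\[
2\,\text{Re}\bigl((1-x)^{-1}\bigr) - 1 = (1-x)^{-1}(1-xx^*)(1-x^*)^{-1}
\]
does all the work, and both implications follow from the standard fact that $b p b^* \geq 0$ is bounded below by a positive multiple of $1$ if and only if $p$ is, when $b$ is invertible. One small slip: in (ii) $\Rightarrow$ (i), conjugating $\epsilon\cdot 1$ by $(1-x)$ and $(1-x)^*$ gives $1-xx^* \geq \epsilon\,(1-x)(1-x^*) \geq \epsilon\,\norm{(1-x)^{-1}}^{-2}\cdot 1$, not $\epsilon\,\norm{1-x}^{-2}\cdot 1$ as you wrote; but since $1-x$ is assumed invertible, this lower bound is still strictly positive and the conclusion $\norm{x}<1$ follows unchanged. (Similarly, in the forward direction the relevant inequality is $(1-x^*)(1-x)\leq(1+\norm{x})^2\cdot 1$, since $(1-x)^{-1}(1-x^*)^{-1}$ is the inverse of $(1-x^*)(1-x)$; this is equally true and yields the same $\delta$.)

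As for comparison with the paper: the paper does not prove this lemma at all, but simply quotes it from \cite{markovian}. Your self-contained argument via the identity above is the natural one and is essentially the proof one finds there.
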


\end{rmk}

\begin{prop}
Let $1 \in B \subset M$ be a W$^*$-subalgebra, and let $U,V \in M$ be unitaries such that $B\langle U,U^*\rangle$ is $B$-freely independent from $B\langle V,V^* \rangle$ in $(M,E_B)$.  Suppose also that $|\xi(U:B)|_2 < \infty$.  Then there is a holomorphic map $F:\mb D(B) \to \mb D(B)$ such that
\begin{equation*}
E_{B \langle U,U^* \rangle} UVb(1-UVb)^{-1} = UF(b)(1-UF(b))^{-1}
\end{equation*}
and $\norm{F(b)} \leq b$ for $b \in \mb D(B)$.
\end{prop}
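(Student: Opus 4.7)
The plan is to follow the pattern of the freely Markovian case in Section~4. First I would set $\alpha = UVb(1-UVb)^{-1}$, which is well defined in $M$ since $\norm{UVb} = \norm{b} < 1$. The existence of $\xi(UV:B) \in L^2$, required for closability of $d_{UV:B}$, follows from the hypothesis $|\xi(U:B)|_2 < \infty$ via Corollary~\ref{conjmorph}. Proposition~\ref{closder} applied to $\overline d_{UV:B}$ then places $\alpha$ in $\frk D(\overline d_{UV:B})$, and Proposition~\ref{derresolvents}(ii) yields the resolvent identity $\overline d_{UV:B}(\alpha) = (\alpha+1) \otimes \alpha$.

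Next, Corollary~\ref{dmorph}, extended to the closures using the contractivity of $E_{B\langle U,U^*\rangle}^{\otimes 2}$ on projective tensor products, gives $\gamma := E_{B\langle U,U^*\rangle}(\alpha) \in \frk D(\overline d_{U:B})$ with $\overline d_{U:B}(\gamma) = (\gamma+1) \otimes \gamma$. Applying the disk lemma (Remark~\ref{disk}) to $UVb$ and then taking the conditional expectation $E_{B\langle U,U^*\rangle}$ (which preserves positivity) shows $2\,\mathrm{Re}(1+\gamma) \geq 1+\epsilon$, so $1+\gamma$ is invertible and $\norm{\gamma(1+\gamma)^{-1}} < 1$. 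The converse direction of Proposition~\ref{derresolvents}(ii) then produces $n := U^*\gamma(1+\gamma)^{-1} \in \ker \overline d_{U:B}$ with $\gamma = Un(1-Un)^{-1}$ and $\norm{n} < 1$. Setting $F(b) := n$ defines a holomorphic map $\mb D(B) \to M$ with $F(0) = 0$; the remaining task is to show $F(b) \in B$ with $\norm{F(b)} \leq \norm{b}$.

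The main obstacle, analogous to Proposition~\ref{deltaker} in the additive case, is upgrading $n \in \ker \overline d_{U:B}$ to $n \in B$. The strategy is to exhibit $Un = h(U)$ for some $h \in B_{R,U}^{\sim}\{t\}$ with $R > 2$, so that Proposition~\ref{dker} applies (note $\overline d_{U:B}(Un) = 1 \otimes Un$ by the derivation rule since $n \in \ker \overline d_{U:B}$). By analyticity of $F$ and connectedness of $\mb D(B)$, it suffices to establish $F(b) \in B$ for $b$ in a neighborhood of $0$, since $b \mapsto F(b) - E_B(F(b))$ is then a holomorphic $M$-valued map vanishing on an open set. For small $\norm{b}$, I would expand $\alpha = \sum_{k \geq 1}(UVb)^k$; direct computation of $d_{UV:B}^{(p)}((UVb)^k)$ as a sum of $\binom{k}{p}$ pure tensors of blocks $(UVb)^{j_i}$ yields the estimate $\norm{(UVb)^k}_{R,UV}^{\sim} \leq (\norm{b}(1+R))^k$, so the series converges in $B_{R,UV}^{\sim}\{t\}$ to an element of norm less than $1$ whenever $\norm{b}(1+R) < 1/2$, which is compatible with $R > 2$. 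Because every $B\langle U,U^*\rangle$-letter in $(UVb)^k$ is $U$ to the first power, the $B$-free product conditional expectation formula forces $E_{B\langle U,U^*\rangle}((UVb)^k) \in B\langle U \rangle$; combined with contractivity of $E^{\otimes(p+1)}$ on projective tensor norms, this gives $\gamma = g(U)$ for some $g \in B_{R,U}^{\sim}\{t\}$ with $\norm{g}_{R,U}^{\sim} < 1$. Invertibility of $1+g$ in this Banach algebra then delivers $h := g(1+g)^{-1} \in B_{R,U}^{\sim}\{t\}$ with $h(U) = Un$, and Proposition~\ref{dker} concludes $n \in B$. Having established $F : \mb D(B) \to \mb D(B)$ holomorphic with $F(0) = 0$, the Schwarz lemma gives $\norm{F(b)} \leq \norm{b}$.
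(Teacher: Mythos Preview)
Your proposal is correct and follows essentially the same route as the paper's own proof: closability via Corollary~\ref{conjmorph}, the resolvent identity for $\alpha$ from Proposition~\ref{derresolvents}(ii), transfer to $\gamma$ via Corollary~\ref{dmorph}, invertibility of $1+\gamma$ and $\norm{Un}<1$ via the disk lemma, then the ``smoothness'' argument (series expansion for small $b$, $E_{B\langle U,U^*\rangle}(UVb)^k\in B\langle U\rangle$, convergence in $B_{R,U}^{\sim}\{t\}$ with $R>2$, and Proposition~\ref{dker}) to force $n\in B$, followed by Schwarz. The only difference is cosmetic: the paper establishes invertibility of $1+\gamma$ by a half-plane spectral argument for $(1-UVb)^{-1}$ and only afterwards invokes Remark~\ref{disk} to get $\norm{Un}<1$, whereas you obtain both from the single observation $2\,\mathrm{Re}(1+\gamma)\geq 1+\epsilon$; your ordering is slightly cleaner but not materially different.
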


\begin{proof}
Since $|\xi(U:B)|_2 < \infty$, also $|\xi(UV:B)|_2 < \infty$ by \ref{conjmorph}.  So $d_{U:B}$ and $d_{UV:B}$ are both closable in the uniform norm.  Let $b \in \mb D(B)$, and set $\alpha = UVb(1-UVb)^{-1}$.  Then $\alpha \in \frk D(\overline d_{UV:B})$ by Proposition \ref{closder}, and by Lemma \ref{derresolvents} we have
\begin{equation*}
 \overline d_{UV:B} (\alpha) = (\alpha + 1) \otimes \alpha. 
\end{equation*}
It follows from Corollary \ref{dmorph} that $ \gamma = E_{B \langle U,U^* \rangle}(\alpha) \in \frk D(\overline d_{U:B})$, and
\begin{equation*}
 \overline d_{U:B}(\gamma) = (\gamma + 1)\otimes \gamma.
\end{equation*}
Now
\begin{equation*}
 \gamma + 1 = E_{B \langle U,U^* \rangle} (1-UVb)^{-1},
\end{equation*}
so to show that $\gamma + 1$ is invertible, it suffices to show that $0$ is not in the convex hull of the spectrum of $(1-UVb)^{-1}$.  Let $z \in \C$, then 
\begin{equation*}
 (1-UVb)^{-1} - z = (1-z + zUVb)(1-UVb)^{-1}
\end{equation*}
is invertible if $|z|\norm{b} < |1-z|$, in particular if $\text{Re}(z) < 1/2$.  So $\gamma + 1$ is invertible, and by Lemma \ref{derresolvents} we have $\gamma = Un(1-Un)^{-1}$ for some $n \in \text{Ker } \overline d_{U:B}$ such that $1-Un$ is invertible.  

It is clear that $n$ depends analytically on $b$, it remains to show that $n \in \mb D(B)$, and that $\norm{n} \leq \norm{b}$.  First we claim that $\norm{n} < 1$.  Since $U$ is unitary, it suffices to show that $\norm{Un} < 1$.  By Lemma \ref{disk}, it suffices to show that $1-Un$ is invertible, and $2 \text{Re}(1-Un)^{-1} \geq (1 + \epsilon)$ for some $\epsilon > 0$.  But we have
\begin{equation*}
 (1- Un)^{-1} = \gamma + 1 = E_{B \langle U,U^* \rangle} (1-UVb)^{-1},
\end{equation*}
and since $\norm{UVb} < 1$, applying Lemma \ref{disk} again shows that $2 \text{Re}(1-UVb)^{-1} \geq (1+ \epsilon)$ for some $\epsilon > 0$.  So $\norm{b} < 1$, and it then follows from analyticity that in fact $\norm{F(b)} \leq \norm{b}$.  Indeed, let $b \in \mb D(B)$, and let $\psi$ a bounded linear functional on $M$, then $z \mapsto \psi(F(z(b/\norm{b})))$ is an analytic function $\mb D(\C) \to \mb D(\C)$.  By Schwarz's lemma, $|\psi(F(z(b/\norm{b})))| \leq |z|$ for $z \in \mb D(\C)$.  Taking $z = \norm{b}$, we have $|\psi(F(b))| \leq \norm{b}$, since $\psi$ is arbitrary we have $\norm{F(b)} \leq \norm{b}$.

Finally we claim that $F(b) \in B$ for $b \in \mb D(B)$.  By analytic continuation, it suffices to show this for $\norm{b}$ sufficiently small.  Let $R > 2$, $0 < \epsilon < 1/2$ and let $b \in B$, $\norm{b}(1+R) < \epsilon$.  We have
\begin{equation*}
 UVb(1-UVb)^{-1} = \sum_{n \geq 1} (UVb)^n.
\end{equation*}
Now 
\begin{equation*}
 d_{UV:B}^{(p)} (UVb) = \begin{cases} UVb & p = 0\\ 1 \otimes UVb & p = 1\\0 & p \geq 2\end{cases}.
\end{equation*}
In particular, setting $f = tb \in B \langle t \rangle$ we have
\begin{equation*}
 \norm{f}^\sim_{R,UV} < \epsilon.
\end{equation*}
It follows that
\begin{equation*}
 \norm{f^n}^\sim_{R,UV} < \epsilon^n.
\end{equation*}
Now since $U$ and $V$ are $B$-free, it follows that
\begin{equation*}
 E_{B \langle U,U^* \rangle} (UVb)^n \in B \langle U \rangle,
\end{equation*}
so let $P_n \in B \langle t \rangle$ be such that
\begin{equation*}
 P_n(U) = E_{B \langle U,U^* \rangle}(UVb)^n.
\end{equation*}
By Corollary \ref{dmorph},
\begin{equation*}
 d_{U:B}^{(p)} P_n(U) = (E_{B \langle U,U^* \rangle})^{\otimes (p+1)} d_{UV:B}^{(p)}(UVb)^n.
\end{equation*}
In particular,
\begin{equation*}
 \norm{P_n}_{R,U}^\sim \leq \norm{f^n}_{R,UV}^\sim < \epsilon^n.
\end{equation*}
So $\sum_{n \geq 1} P_n$ converges in $B_{R,U}^\sim\{t\}$ to some limit $h$ with $\norm{h} < 1$.  It follows that $1 + h$ is invertible in $B_{R,U}^\sim\{t\}$, and 
\begin{equation*}
 UF(b) = g(U),
\end{equation*}
where $g = 1- (1+h)^{-1}$.  But $g \in B_{R,U}^\sim\{t\}$ and  $\overline d_{U:B}( g(U)) = 1 \otimes g(U)$, so by Proposition \ref{dker}, $g(U) = Ub$ for some $b \in B$.  Since $U$ is invertible, we have $F(B) = b \in B$, which completes the proof.

\end{proof}

\begin{rmk}
We may now remove the condition on the conjugate $\xi(U:B)$.
\end{rmk}

\begin{thm}
Let $1 \in B \subset M$ be a W$^*$-subalgebra, and let $U,V \in M$ be unitaries such that $B\langle U,U^* \rangle$ is $B$-freely independent from $B \langle V,V^* \rangle$ in $(M,E_B)$.  Then there is a holomorphic map $F:\mb D(B) \to \mb D(B)$ such that
\begin{equation*}
E_{B \langle U,U^* \rangle} UVb(1-UVb)^{-1} = UF(b)(1-UF(b))^{-1}
\end{equation*}
and $\norm{F(b)} \leq b$ for $b \in \mb D(B)$.
\end{thm}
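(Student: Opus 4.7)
The overall plan follows the structure of the proof of the analogous theorem at the end of Section~4: regularize $U$ by a semicircular approximation so that the previous proposition applies, then take $\epsilon \to 0$ using a freely Markovian tower to transfer the resulting subordination back to $(U,V)$.  Fix a $(0,1)$-semicircular $S \in M$ freely independent from $B \vee B\langle U, V, U^*, V^*\rangle$, and for $0 < \epsilon < 1$ set $U_\epsilon = \exp(i\pi\epsilon S)$ and $\tilde U_\epsilon := U_\epsilon U$.  Standard free-probability bookkeeping shows that $B\langle U, U^*\rangle$, $B\langle V, V^*\rangle$, $B\langle U_\epsilon, U_\epsilon^*\rangle$ are mutually $B$-free; in particular, $B\langle \tilde U_\epsilon, \tilde U_\epsilon^*\rangle \subset B\langle U, U^*\rangle \vee B\langle U_\epsilon, U_\epsilon^*\rangle$ is $B$-free from $B\langle V, V^*\rangle$.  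Propositions~\ref{opconj} and~\ref{conjsemicirc} give that $\xi(U_\epsilon:\C;\C)$ exists and is bounded, and Corollary~\ref{conjmorph}, applied with $U_\epsilon$ and $U$ in the roles of $U$ and $V$, yields $\xi(\tilde U_\epsilon:B;B) = E_{B\langle \tilde U_\epsilon, \tilde U_\epsilon^*\rangle}(\xi(U_\epsilon:\C;\C))$, so $|\xi(\tilde U_\epsilon:B)|_2 < \infty$.

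Applying the previous proposition to $(\tilde U_\epsilon, V)$ produces, for each $\epsilon$, a holomorphic $F_\epsilon: \mb D(B) \to \mb D(B)$ with $\norm{F_\epsilon(b)} \leq \norm{b}$ and
\begin{equation*}
E_{B\langle \tilde U_\epsilon, \tilde U_\epsilon^*\rangle}\bigl(\tilde U_\epsilon V b(1-\tilde U_\epsilon V b)^{-1}\bigr) = \tilde U_\epsilon F_\epsilon(b)(1-\tilde U_\epsilon F_\epsilon(b))^{-1}.
\end{equation*}
By the same argument used at the end of Section~4 --- with $\tilde U_\epsilon$ playing the role that $U_\epsilon a U_\epsilon^*$ played there --- two applications of \cite[Lemma 3.3]{mutual} show that the triple $\bigl(B\langle U, U^*\rangle,\, B\langle \tilde U_\epsilon, \tilde U_\epsilon^*\rangle,\, B\langle \tilde U_\epsilon, \tilde U_\epsilon^*\rangle \vee B\langle V, V^*\rangle\bigr)$ is freely Markovian, and then \cite[Lemma 3.7]{mutual} yields $E_{B\langle U, U^*\rangle} \circ E_{B\langle \tilde U_\epsilon, \tilde U_\epsilon^*\rangle} = E_{B\langle U, U^*\rangle}$ on the joint algebra.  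Applying $E_{B\langle U, U^*\rangle}$ to both sides of the subordination relation then gives
\begin{equation*}
E_{B\langle U, U^*\rangle}\bigl(\tilde U_\epsilon V b(1-\tilde U_\epsilon V b)^{-1}\bigr) = E_{B\langle U, U^*\rangle}\bigl(\tilde U_\epsilon F_\epsilon(b)(1-\tilde U_\epsilon F_\epsilon(b))^{-1}\bigr).
\end{equation*}

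Now pass $\epsilon \to 0$.  The left-hand side converges in norm to $\gamma := E_{B\langle U, U^*\rangle}(UVb(1-UVb)^{-1})$.  For the right-hand side, since $\norm{F_\epsilon(b)} \leq \norm{b} < 1$ and $\norm{U_\epsilon - 1} \to 0$, the resolvent identity gives
\begin{equation*}
\bigl\|\tilde U_\epsilon F_\epsilon(b)(1-\tilde U_\epsilon F_\epsilon(b))^{-1} - UF_\epsilon(b)(1-UF_\epsilon(b))^{-1}\bigr\| \leq \frac{\norm{U_\epsilon - 1}\,\norm{b}}{(1-\norm{b})^2} \to 0,
\end{equation*}
and $UF_\epsilon(b)(1-UF_\epsilon(b))^{-1} \in B\langle U, U^*\rangle$, so the right-hand side of the Markov identity differs from $UF_\epsilon(b)(1-UF_\epsilon(b))^{-1}$ by an $o(1)$ error in norm.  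Hence $UF_\epsilon(b)(1-UF_\epsilon(b))^{-1} \to \gamma$ in norm.  Remark~\ref{disk} applied to $UVb$ yields $\text{Re}(1+\gamma) \geq (1+\delta)/2$ for some $\delta > 0$, so $1+\gamma$ is invertible; inverting the convergence --- equivalently $(1-UF_\epsilon(b))^{-1} \to 1+\gamma$ --- gives $F_\epsilon(b) \to F(b) := U^*\bigl(1-(1+\gamma)^{-1}\bigr)$ in norm.  Each $F_\epsilon(b)$ lies in the norm-closed subspace $B$, so $F(b) \in B$ with $\norm{F(b)} \leq \norm{b}$, and by construction $\gamma = UF(b)(1-UF(b))^{-1}$.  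Holomorphic dependence of $F$ on $b$ is inherited from that of $\gamma$.

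The principal technical obstacle will be verifying the freely Markovian property of the triple above --- that is, that $B\langle U, U^*\rangle$ and $B\langle \tilde U_\epsilon, \tilde U_\epsilon^*\rangle \vee B\langle V, V^*\rangle$ are conditionally $B\langle \tilde U_\epsilon, \tilde U_\epsilon^*\rangle$-free in $E_{B\langle \tilde U_\epsilon, \tilde U_\epsilon^*\rangle}$.  In Section~4 the middle algebra arose via inner conjugation $U_\epsilon A U_\epsilon^*$, whereas here it arises via left multiplication $U_\epsilon U$; the combinatorial input from \cite[Lemma 3.3]{mutual} should carry over essentially unchanged, since the only ingredient used is the mutual $B$-freeness of $B\langle U, U^*\rangle$, $B\langle V, V^*\rangle$, and $B\langle U_\epsilon, U_\epsilon^*\rangle$, but the bookkeeping has to be done carefully.
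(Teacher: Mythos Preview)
Your approach is essentially identical to the paper's: regularize by replacing $U$ with $\tilde U_\epsilon = U_\epsilon U$, invoke the preceding proposition, use a freely Markovian tower to pull the result back to $B\langle U,U^*\rangle$, and pass to the limit using the uniform bound $\norm{F_\epsilon(b)} \leq \norm{b}$ together with the invertibility of $1+\gamma$ coming from Remark~\ref{disk}. The limit argument and the choice of Markovian triple match the paper's proof almost line for line (the paper takes the third algebra to be $B\langle \tilde U_\epsilon V, V^*\tilde U_\epsilon^*\rangle$ rather than $B\langle \tilde U_\epsilon,\tilde U_\epsilon^*\rangle \vee B\langle V,V^*\rangle$, but either works since the element whose expectation you compute lies in the smaller one).

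One genuine slip in your bookkeeping: the chain through Proposition~\ref{opconj} and Corollary~\ref{conjmorph} produces the \emph{$B$-valued} conjugate $\xi(\tilde U_\epsilon:B;B)$, whereas the preceding proposition requires the \emph{scalar-valued} conjugate $\xi(\tilde U_\epsilon:B) = \xi(\tilde U_\epsilon:B;\C)$ to guarantee closability of $d_{\tilde U_\epsilon:B}$; these are not the same object and the former does not obviously imply the latter. The fix is to run the chain with $B_1 = \C$ throughout: Proposition~\ref{freeconj} with base $\C$ (using that $\C\langle U_\epsilon,U_\epsilon^*\rangle$ is free from $B\langle U,U^*\rangle$) upgrades $\xi(U_\epsilon:\C;\C)$ to $\xi(U_\epsilon:B;\C)$, and then Corollary~\ref{conjmorph} with $B_1 = \C$ yields $\xi(\tilde U_\epsilon:B;\C)$. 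The paper itself simply cites Proposition~\ref{conjsemicirc} at this step, suppressing the intermediate implications.
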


\begin{proof}
Let $S$ be a $(0,1)$-semicircular element in $(M,\tau)$ which is freely independent with $B \langle U,V,U^*,V^* \rangle$.  Applying \cite[Lemma 3.3]{mutual} twice, we see that $B\langle U,U^* \rangle$, $B \langle U_\epsilon U, U^* U_\epsilon^* \rangle$, $B\langle U_\epsilon U V, V^*U^*U_{\epsilon^*}\rangle$ is a freely Markovian triple, where $U_\epsilon = \exp(\pi i\epsilon S)$.  By \cite[Lemma 3.4]{mutual}, we have
\begin{equation*}
 E_{B \langle U,U^* \rangle}E_{B \langle U_\epsilon U,U^*U_\epsilon^* \rangle} E_{B \langle U_\epsilon U V,V^* U^* U_{\epsilon}^* \rangle} = E_{B \langle U,U^* \rangle} E_{B \langle U_\epsilon UV,V^*U^*U_\epsilon^* \rangle}.
\end{equation*}
Now $B \langle U_\epsilon U,U^*U_\epsilon^*\rangle$ and $B \langle V,V^* \rangle$ are $B$-free, and $|\xi(U_\epsilon U:B)|_2 < \infty$ by Corollary \ref{conjsemicirc}.  So given $b \in \mb D$, we may apply the proposition to find $n_\epsilon \in B$, $0 < \epsilon < 1$, such that $\norm{n_\epsilon} \leq \norm{b}$ and
\begin{equation*}
 E_{B \langle U_\epsilon U,U^*U_\epsilon^* \rangle} U_\epsilon UVb(1-U_\epsilon UVb)^{-1} = U_\epsilon Un_\epsilon(1-U_\epsilon Un_\epsilon)^{-1}.
\end{equation*}
It follows that
\begin{equation*}
E_{B \langle U,U^* \rangle} U_\epsilon UVb(1-U_\epsilon UVb)^{-1} = E_{B \langle U,U^* \rangle}U_\epsilon Un_\epsilon(1-U_\epsilon U n_\epsilon)^{-1}.
\end{equation*}
Now since $U_\epsilon UV b$ tends to $UVb$ as $\epsilon \to 0$, and $(1-UVb)^{-1}$ is invertible, it follows that
\begin{align*}
 \lim_{\epsilon \to 0} U_\epsilon UVb(1-U_\epsilon UVb)^{-1} &= \lim_{\epsilon \to 0} (1-U_\epsilon UVb)^{-1} - 1\\
&= (1- UVb)^{-1} - 1\\
&= UVb(1-UVb)^{-1},
\end{align*}
with convergence in norm.  Since $\norm{n_\epsilon} \leq \norm{b} < 1$ for $0 < \epsilon < 1$, it follows that
\begin{equation*}
 \lim_{\epsilon \to 0} \norm{U_{\epsilon}Un_\epsilon(1-U_\epsilon Un_\epsilon)^{-1} - Un_\epsilon(1-Un_\epsilon)^{-1}} = 0.
\end{equation*}
Hence,
\begin{align*}
 E_{B\langle U,U^* \rangle} UVb(1-UVb)^{-1} &= \lim_{n \to \infty} E_{B \langle U,U^* \rangle} U_\epsilon UVb(1-U_\epsilon UVb)^{-1}\\
&= \lim_{n \to \infty} E_{B \langle U,U^* \rangle} U_\epsilon U n_\epsilon(1-U_\epsilon Un_\epsilon)^{-1}\\
&= \lim_{n \to \infty} Un_\epsilon(1-Un_\epsilon)^{-1}.
\end{align*}
By the argument in the previous proposition, $E_{B \langle U,U^* \rangle} (1-UVb)^{-1}$ is invertible, so that
\begin{equation*}
 \lim_{\epsilon \to 0} 1 - Un_\epsilon = \left(E_{B\langle U,U^* \rangle} (1-UVb)^{-1}\right)^{-1}.
\end{equation*}
From this it follows that $n_\epsilon$ converges to a limit $n \in B$, such that $\norm{n} \leq \norm{b}$ and
\begin{equation*}
 E_{B \langle U,U^* \rangle}UVb(1-UVb)^{-1} = Un(1-Un)^{-1}.
\end{equation*}
Since the analytic dependence is clear, this completes the proof.

\end{proof}

\bibliographystyle{hsiam}
\bibliography{ref}

\end{document}